\documentclass[sn-mathphys-num]{sn-jnl}


\usepackage{graphicx}%
\usepackage{multirow}%
\usepackage{amsmath,amssymb,amsfonts}%
\usepackage{amsthm}%
\usepackage[title]{appendix}%
\usepackage{xcolor}%
\usepackage{textcomp}%
\usepackage{manyfoot}%
\usepackage{booktabs}%
\usepackage{algorithm}%
\usepackage{algorithmicx}%
\usepackage{algpseudocode}%
\usepackage{listings}%
\usepackage{lmodern} 
\usepackage{dsfont} 
\usepackage{siunitx}

\newtheorem{theorem}{Theorem}
\newtheorem{lem}{Lemma}
\newtheorem{rmk}{Remark}
\newtheorem{prop}{Proposition}
\newtheorem{assump}{Assumption}
\raggedbottom

\newcommand{\ve}{\varepsilon}

\newcommand{\EE}{\mathbb{E}}
\newcommand{\RR}{\mathbb{R}}
\newcommand{\PP}{\mathbb{P}}
\newcommand{\ZZ}{\mathbb{Z}}
\newcommand{\mF}{\mathcal{F}}
\newcommand{\mI}{\mathcal{I}}
\newcommand{\mL}{\mathcal{L}}

\newcommand{\mS}{\mathcal{S}}
\newcommand{\mU}{\mathcal{U}}
\newcommand{\mV}{\mathcal{V}}
\newcommand{\mX}{\mathcal{X}}

\newcommand{\rd}{\mathrm{d}}

\newcommand{\bt}{\overline{t}}
\newcommand{\bx}{\overline{x}}
\newcommand{\btx}{(\overline{t}, \overline{x})}
\newcommand{\mG}{\mathcal{G}}
\newcommand{\nbu}{\nabla_u}
\newcommand{\nx}{\nabla_x}
\newcommand{\ny}{\nabla_y}
\newcommand{\pt}{\partial_t}
\newcommand{\ps}{\partial_s}
\newcommand{\pve}{\partial_{\ve}}
\newcommand{\hp}{\widehat{p}}
\newcommand{\hq}{\widehat{q}}
\newcommand{\htk}{\widehat{t}_k}
\newcommand{\hsk}{\widehat{s}_k}
\newcommand{\hxk}{\widehat{x}_k}
\newcommand{\hyk}{\widehat{y}_k}

\newcommand{\xyet}{x^{1,\ve}_t}
\newcommand{\xeet}{x^{2,\ve}_t}

\newcommand{\pyet}{\phi^{1,\ve}_t}
\newcommand{\peet}{\phi^{2,\ve}_t}
\newcommand{\hPhik}{\widehat{\Phi}_k}

\newcommand{\TD}{\mathrm{TD}}
\newcommand{\tp}{^{\top}}
\newcommand{\parentheses}[1]{\left(#1\right)}
\newcommand{\sqbra}[1]{\left[#1\right]}
\newcommand{\curlybra}[1]{\left\{#1\right\}}
\newcommand{\abs}[1]{\left|#1\right|}
\newcommand{\norm}[1]{\left\|#1\right\|}

\newcommand{\inttx}[1]{\int_0^T \int_\mX #1 \rd x \, \rd t}
\newcommand{\inner}[2]{\left\langle#1,\,#2\right\rangle}
\newcommand{\fd}[2]{\dfrac{\delta #1}{\delta #2}}
\newcommand{\pd}[2]{\dfrac{\partial #1}{\partial #2}}
\newcommand{\rom}[1]{\uppercase\expandafter{\romannumeral #1\relax}}

\DeclareMathOperator{\Tr}{Tr}

\DeclareMathOperator*{\argmax}{arg\,max}

\begin{document}

\title[Actor-Critic Flow for Optimal Control]{Solving Time-Continuous Stochastic Optimal Control Problems: Algorithm Design and Convergence Analysis of Actor-Critic Flow}

\author*[1]{\fnm{Mo} \sur{Zhou}}\email{mozhou366@math.ucla.edu}

\author[2]{\fnm{Jianfeng} \sur{Lu}}\email{jianfeng@math.duke.edu}

\affil*[1]{\orgdiv{Department of Mathematics}, \orgname{University of California}, \orgaddress{\city{Los Angeles}, \postcode{90095}, \state{CA}, \country{USA}}}

\affil[2]{\orgdiv{Department of Mathematics, Department of Physics, and Department of Chemistry}, \orgname{Duke University}, \orgaddress{ \city{Durham}, \postcode{27708}, \state{State}, \country{USA}}}

\abstract{We propose an actor-critic framework to solve the time-continuous stochastic optimal control problem. A least square temporal difference method is applied to compute the value function for the critic. The policy gradient method is implemented as policy improvement for the actor. Our key contribution lies in establishing a linear rate of convergence for our proposed actor-critic flow. Theoretical findings are further validated through numerical examples, showing the efficacy of our approach in practical applications.}

\keywords{Actor-critic, stochastic optimal control, HJB equation, convergence of policy gradient}



\maketitle

\section{Introduction}\label{sec:intro}

The optimal control problem has an exciting long history since last century. In the early 1950s, Pontryagin introduced the maximum principle \citep{boltyanski1956theory, boltyanski1960maximum}, which provided a general framework for solving optimal control problems. Around the same time, Bellman developed dynamic programming \citep{bellman1966dynamic}, a mathematical framework for solving complex optimization problems, which allows for the efficient solution of problems with overlapping subproblems. Then, Kalman made significant contributions to the field of control theory \citep{kalman1960contributions}, particularly in the area of linear quadratic (LQ) control. The combination of Pontryagin's maximum principle, Bellman's dynamic programming, and Kalman's linear quadratic control formed the basis of optimal control theory. Since then, researchers have been exploring upon these foundational ideas \citep{kushner1990numerical}, leading to further developments and applications in various fields, including aerospace \citep{longuski2013optimal}, robotics \cite{bobrow1985time}, economics \citep{leonard1992optimal}, finance \citep{forsyth2007numerical} and beyond.

In addition to these traditional methods mentioned above, there is an increasing number of works that focus on the combination of machine learning and optimal control in recent years \citep{tzen2020mean,nakamura2021adaptive,bachouch2022deep,meng2022sympocnet,onken2022neural,bokanowski2023neural,domingo2023stochastic}, motivated by the impressive success of machine learning. Surveys on recent development for optimal control problems have been conducted in \cite{jin2022survey,hu2023recent}.

Reinforcement learning (RL) \citep{sutton2018reinforcement} has a close relationship with the optimal control problem. While one usually maximizes the reward with discrete time and state in RL, the goal of an optimal control problem is to minimize a cost functional with continuous time and state space. These two topics share many common concepts such as value function, the Bellman equation, and the dynamic programming principle. While traditionally research in these two areas follow separate routes, in recent years, there have been increasing cross-fertilization between the two fields \citep{recht2019tour,wang2020reinforcement, munos1997reinforcement,hure2021deep,quer2022connecting,frikha2023actor}.

\subsection*{Our contribution.} This work lies at the intersection of reinforcement learning and optimal control problem. We develop a general framework to solve the optimal control problem, borrowing ideas from the actor-crtic framework of RL. A deterministic feedback control without entropy regularization is applied in order to obtain  exact control function. We design a least square temporal difference (LSTD) method to compute the value function for the critic. This temporal difference (TD) is obtained from It\^o's calculus, reducing the error compared with a vanilla TD from RL. For the actor part, we implement a policy gradient method, with an explicit expression for the variation of the objective. This policy gradient method outperforms a vanilla gradient descent method. The main theoretical result of this paper is the linear convergence guarantee for our algorithm at the level of continuous gradient flow. The result is also validated through numerical examples.

\subsection*{Related works.}

In the analysis of numerical algorithms for optimal control problems, many studies focus on specific setting such as the LQ problem \citep{darbon2023neural}, because of its clear structure. Various works have proposed policy gradient methods for linear quadratic regulator (LQR), with global convergence analysis \citep{wang2021global,giegrich2022convergence,hambly2021policy}. A study of Newton's method for control problems with linear dynamics and quadratic convergence is conducted in \cite{gobet2022newton}. There are also works that focus on the Hamilton--Jacobi--Bellman (HJB) equation for the LQ system \cite{zhou2024deep}.

It has been an common research theme to consider soft policies in optimal control, i.e., the control is not deterministic. The soft policy enables exploration for global search, making the convergence analysis more tractable \citep{zhou2021curse, ma2024convergence}.
For instance, in the context of mean-field games, the convergence of soft policies to the Nash equilibrium is guaranteed under mild assumptions \citep{domingo2020mean, firoozi2022exploratory, guo2022entropy}. The convergence for soft policy gradient methods in finite action spaces is studied in \cite{reisinger2021regularity}. A rate of convergence for general action spaces is then established in \cite{sethi2024entropy}. Jia and Zhou propose an actor-critic method for the stochastic problem \cite{jia2022policy}. The use of soft policies however leads to sub-optimality, to overcome this, Tang et al. analyzes the property of a class of soft policy algorithms where the rate of exploration decreases to zero \cite{tang2022exploratory}.

In more general settings, there have been several recent works on the convergence of algorithms for various optimal control problems. s. Carmona and Lauri\`ere analyze approximation errors with neural networks for LQ mean-field games \cite{carmona2021convergence} and general mean-field games \cite{carmona2022convergence} . The mean field games is also studied in \cite{lauriere2022convergence,lauriere2023policy}.
Kerimkulov et al. study the convergence and stability of a Howard’s policy improvement algorithm \cite{kerimkulov2020exponential}. Ito et al. investigate an iterative method with a superlinear convergence rate  \cite{ito2021neural}. Kerimkulov et al. study the convergence rate of a method of successive approximations (MSA) algorithm with controlled diffusion \cite{kerimkulov2021modified}. A modified MSA method with convergence proof is proposed in \cite{sethi2022modified}. Huang et al. propose the policy iteration algorithm (PIA) and study its convergence properties \cite{huang2022convergence}. Reisinger et al. study the condition for linear convergence of policy gradient method \cite{reisinger2022linear}.

\medskip 

The rest of the paper is organized as follows. The background for the stochastic optimal control problem is introduced in Section \ref{sec:background}. Then we introduce our actor-critic algorithm  in Section \ref{sec:actor_critic}. In section \ref{sec:flow}, we analyze the continuous limit of our algorithm and describe the actor-critic flow. We establish the global convergence property of this actor-critic flow in Section \ref{sec:theory}, with proofs deferred to the appendix. We validate our algorithm through numerical examples in Section \ref{sec:example}. Finally, we conclude the paper and give future directions of research in \ref{sec:future}.

\section{The stochastic optimal control problem} \label{sec:background}
 
Let us denote $\mX$ an $n$-dimensional unit torus, which can be understood as $[0,1]^n$ with periodic boundary condition. We use $|\cdot|$ to denote the absolute value of a scalar, the $\ell^2$ norm of a vector, or the Frobenius norm of a matrix, depending on the context. $\norm{\cdot}_{L^1}$ and $\norm{\cdot}_{L^2}$ are the $L^1$ and $L^2$ norms of a function. $\norm{\cdot}_2$ represents the $\ell^2$ operator norm (i.e. the largest singular value) of a matrix. $\inner{\cdot}{\cdot}$ is the inner product between two vectors, and $\inner{\cdot}{\cdot}_{L^2}$ denotes the inner product between two $L^2$ functions. $\Tr(\cdot)$ means the trace of a squared matrix. Throughout the work, we fix $\left(\Omega, \mF, \{\mF_t\}, \PP \right)$ as a filtered probability space.

We consider the optimal control problem on state space $\mX$ with time $t\in[0,T]$. The objective is to minimize the following cost functional
\begin{equation}\label{eq:cost}
    J[u] = \EE\sqbra{\int_0^T r(x_t, u_t) \,\rd t + g(x_T)},
\end{equation}
subject to the dynamic
\begin{equation}\label{eq:SDE_X}
\rd x_t = b(x_t, u_t) \rd t + \sigma(x_t) \rd W_t.
\end{equation}
The functions $r(x,u): \mX \times \RR^{n'} \to \RR$ and $g(x): \mX \to \RR$ in \eqref{eq:cost} are the running cost and the terminal cost respectively. In the state dynamic \eqref{eq:SDE_X}, $b(x,u): \mX \times \RR^{n'} \to \RR^{n}$ and $\sigma(x): \mX \to \RR^{n \times m}$ are the drift and diffusion coefficients, $u_t \in \RR^{n'}$ is an $\mF_t$-adapted control process, and $W_t$ is an $m$-dimensional $\mF_t$-Brownian motion. In this work, we assume the initial state is uniformly distributed on $\mX$. In a more general setting, the diffusion $\sigma$ might also depend on the control $u_t$, while we will limit ourselves to only $x_t$ dependence in this work.

In this work, we consider the smooth feedback control, i.e. it is a smooth function of time and state $u_t = u(t,x_t)$, because it is the case for the optimal control (see the verification theorem in \cite{yong1999stochastic} or an explanation in \cite{zhou2023policy}). This smoothness frees us form the technical definition for the viscosity solution. We will also denote the control process by $u$ when there is no confusion.

Throughout the paper, we assume that the matrix valued function
$$D(x) := \frac12 \sigma(x) \sigma(x)\tp \in \RR^{n \times n}$$
is uniformly elliptic with its smallest eigenvalue $\lambda_{\min}(x) \ge \sigma_0$ for some uniform $\sigma_0 > 0$. Let $\rho^{u}(t,x)$ be the density function of $x_t$ under control function $u$, then $\rho^{u}$ satisfies the Fokker Planck (FP) equation \citep{risken1996fokker}
\begin{equation}\label{eq:FokkerPlanck}
    \partial_t \rho^u(t,x) = -\nx \cdot \sqbra{b(x, u(t,x)) \rho^u(t,x)} + \sum_{i,j=1}^n \partial_i \partial_j \sqbra{D_{ij}(x) \rho^u(t,x)},
\end{equation}
where we denote $\partial_i = \partial_{x_i}$ for simplicity. The initial condition $\rho^u(0,\cdot)$ is the density of $x_0$. For example, $\rho^u(0,\cdot) \equiv 1$ if $x_0 \sim \text{Unif}(\mX)$ and $\rho^u(0,\cdot) = \delta_{x_0}$ if $x_0$ is deterministic. We will assume $\rho^u(0,\cdot)$ follows uniform distribution in the sequel if not specified. We denote $\mI_{u}$ the infinitesimal generator of the stochastic differential equation (SDE) \eqref{eq:SDE_X} under control $u$, then, the Fokker Planck equation \eqref{eq:FokkerPlanck} becomes $\partial_t \rho^u = \mI_u^{\dagger} \rho^u$, where $\mI_u^{\dagger}$ is the adjoint of $\mI_{u}$. (We reserve the more common generator notation $\mL$ for loss and Lyapunov functions)

Next, we introduce the value function, an important tool in optimal control. It is defined as the expected cost under a given control starting from a given time and state: 
\begin{equation}\label{eq:value}
    V_u(t,x) = \EE\sqbra{\int_t^T r(x_s, u_s) \,\rd s + g(x_T) ~\Big|~ x_t=x},
\end{equation}
where the subscript $u$ indicates that the value function is w.r.t. the control function $u$.
Using the Markov property, it can be shown that $V_u(t, x)$ satisfies the Bellman equation
\begin{equation}\label{eq:Bellman}
    V_u(t_1,x) = \EE \sqbra{\int_{t_1}^{t_2} r(x_t, u_t) \,\rd t + V_u(t_2, x_{t_2}) ~\Big|~ x_{t_1} = x}
\end{equation}
for any $0 \le t_1 \le t_2 \le T$ and $x \in \mX$. Analogous concepts for the value function and the Bellman equation are also common in RL \citep{sutton2018reinforcement}. 
If we rearrange every term to the right, divide this equation by $t_2-t_1$, and then take the limit $t_2 \to t_1^+$, we obtain the Hamilton--Jacobi (HJ) equation
\begin{equation}\label{eq:HJ}
\left\{ \begin{aligned}
    -\partial_t V_u(t,x) + G\parentheses{t, x, u(t,x), -\nx V_u(t,x), -\nx^2 V_u(t,x)} & = 0 \\
    V_u(T, x) & = g(x).
\end{aligned} \right.
\end{equation}
for the value function, where $G: \RR \times \mX \times \RR^{n'} \times \RR^n \times \RR^{n\times n} \to \RR$ is the generalized Hamiltonian \citep{yong1999stochastic}, defined as
\begin{equation}\label{eq:generalized_Hamiltonian}
G\parentheses{t,x,u,p,P} := \frac12 \Tr\parentheses{P \sigma(x) \sigma(x)\tp} + \inner{p}{b(x,u)} - r(x,u).
\end{equation}

Another important quantity for optimal control is the adjoint state. Let $p_t = -\nx V_u(t,x_t)$ and $q_t = - \nx^2 V_u(t,x_t) \sigma(x_t)$. Then, $(p_t, q_t) \in \RR^n \times \RR^{n \times m}$ is the unique solution to the following backward stochastic differential equations (BSDE)  
\begin{equation}\label{eq:adjoint1}
\left\{ \begin{aligned}
    \rd p_t & = -\sqbra{\nx b(x_t,u(t, x_t))\tp p_t + \nx \Tr\parentheses{\sigma(x_t)\tp q_t} - \nx r(x_t,u(t, x_t))} \rd t + q_t ~ \rd W_t \\
    p_T & = -\nx g(x_T).
\end{aligned} \right.
\end{equation}

Extensive research has been conducted on the existence and uniqueness of the optimal control, see, for example \citep{yong1999stochastic}. We denote $u^*(\cdot,\cdot)$ the optimal control function, $x^*_t$ the optimal state process, and $V^*(t, x) = V_{u^*}(t,x)$ the optimal value function. Through the paper, we assume existence and uniqueness of the optimal control function, which will be further elaborated in Section \ref{sec:theory}. By the dynamic programming principle \citep{fleming2012deterministic},
\begin{equation*}
    V^*(t_1, x) = \inf_{u} \EE\sqbra{\int_{t_1}^{t_2} r(x_t, u_t) \,\rd t + V^*(t_2, x_{t_2}) ~\Big|~ x_{t_1}=x},
\end{equation*}
where the infimum is taken over all the control that coincide with $u^*$ in $[0,t_1] \cup [t_2,T]$. This dynamic programming principle can be also viewed as the optimal version of the Bellman equation \eqref{eq:Bellman}. 
Similarly, if we rearrange all the terms to the right, divide the equation by $t_2-t_1$, and then take the limit $t_2 \to t_1^+$ \cite[Ch.1]{krylov2008controlled}, we arrive at the HJB equation
\begin{equation}\label{eq:HJB}
\left\{ \begin{aligned}
    -\partial_t V(t,x) + \sup_{u \in \RR^{n'}} G\parentheses{t, x, u, -\nx V(t,x), -\nx^2 V(t,x)} & = 0 \\
    V(T, x) & = g(x)
\end{aligned} \right.
\end{equation}
for the optimal value function. From this equation, we observe that a necessary condition for an optimal control $u$ is the maximum condition
\begin{equation}\label{eq:max1}
u(t,x) = \argmax_{u' \in \RR^{n'}} G\parentheses{t, x, u', -\nx V_u(t,x), -\nx^2 V_u(t,x)}.
\end{equation}
This maximization problem makes the semi-linear HJB equation \eqref{eq:HJB} difficult to solve.

\section{The actor-critic framework}\label{sec:actor_critic}

The actor-critic method, firstly proposed by Konda and Tsitsiklis \cite{konda1999actor}, is a popular approach in reinforcement learning (RL), which naturally also applies to optimal control problems. In this method, we solve for both the value function and the control field. The control (i.e.,  the policy in RL) is known as the \emph{actor}. The value function for the control is known as the \emph{critic}, because it is used to evaluate the optimality of the control. Accordingly, the actor-critic method consists of two parts: policy evaluation for the critic and policy improvement for the actor.

In this section, we establish an actor-critic framework to solve the optimal control problem. We develop a variant of TD learning for the critic and propose a policy gradient method for the actor.

\subsection{Policy evaluation for the critic}\label{sec:critic}

In this section, we consider the policy evaluation process for a fixed control $u_t = u(t, x_t)$, that is, we want to compute $V_u(\cdot,\cdot)$.
For the purpose, we will use a variant of TD learning.

The TD learning method \citep{sutton2018reinforcement} is the most popular method for evaluating the value function in RL. The term TD refers to a quantity that indicates the discrepancy between our current approximation of the value function and one of its sample estimate. 
By definition \eqref{eq:value}, we have
\begin{equation*}
V_u(0,x_0) = \EE \sqbra{\int_0^T r(x_t, u_t) \,\rd t + g(x_T) ~\Big|~ x_0},
\end{equation*}
which motivate us to define the TD as
\begin{equation}\label{eq:TD_RL}
\TD_{RL} = \int_0^T r(x_t, u_t) \,\rd t + g(x_T) - \mV_0(x_0).
\end{equation}
Here, $\mV_0$ is some estimate of $V_u(0,\cdot)$ and $\int_0^T r(x_t, u_t) \,\rd t + g(x_T)$ is an unbiased sample of $V_u(0,x_0)$ by definition. We use the subscript \emph{RL} to indicate that this TD in \eqref{eq:TD_RL} is a direct analog of the TD method used in RL. In this work, we will instead use an alternative version of TD derived from stochastic calculus. Applying It\^o's Lemma and the HJ equation \eqref{eq:HJ}, we obtain
\begin{equation}\label{eq:Ito}
\begin{aligned}
& \quad g(x_T) = V_u(T, x_T) \\
& = V_u(0, x_0) + \int_0^T \nx V_u(t, x_t)\tp \sigma(x_t) \,\rd W_t \\
& \quad + \int_0^T \sqbra{\partial_t V_u(t,x_t) + \frac12 \Tr\parentheses{\sigma(x_t) \sigma(x_t)\tp \nx^2 V_u(t,x_t)} + b(x_t, u_t)\tp \nx V_u(t,x_t)} \rd t\\
& = V_u(0, x_0) + \int_0^T \nx V_u(t, x_t)\tp \sigma(x_t) \,\rd W_t - \int_0^T r(x_t, u_t)\, \rd t.
\end{aligned}
\end{equation}
Therefore, we define a modified TD as 
\begin{equation}\label{eq:TD}
\TD = \int_0^T r(x_t, u_t) \,\rd t + g(x_T) - \mV_0(x_0) - \int_0^T \mG(t, x_t)\tp \sigma(x_t) \,\rd W_t,
\end{equation}
where $\mG$ is some estimate of $\nx V_u$. The temporal difference can be used for policy evaluation via LSTD method \citep{bradtke1996linear}, which minimizes the following expected squared loss
\begin{equation}\label{eq:critic_loss}
\mL_c = \frac12 \EE\sqbra{\TD^2}.
\end{equation}

There are two important advantages using \eqref{eq:TD} instead of \eqref{eq:TD_RL}. The first is that we are able to obtain an estimate of the gradient of value function, which is necessary in the actor part (see the next section). The second is that we can expect higher accuracy of the estimate due to the low-variance. If the estimate for the value function is precise, we have $\TD = 0$ \emph{almost surely} while we only have $\EE [\TD_{RL}] = 0$. The modified TD vanishes without taking the expectation because of the martingale term $- \int_0^T \mG(t, x_t)\tp \sigma(x_t) \,\rd W_t$. In comparison, $\TD_{RL}$ is nonzero, even for exact value function, which leads to higher noise of the estimate. This has been confirmed numerically in our previous work \citep{zhou2021actor}. We also remark that the $\TD_{RL}$ will be more useful for online learning, where we do not have access to the functions $b$, $\sigma$, and $r$.

Let $\tau$ denote the training time of the algorithm, which should be distinguished from the time $t$ in the dynamics of the control problem. The critic loss along the algorithm is denoted accordingly by $\mL_c^\tau$. In numerical implementation, we can parametrize $V_u(0,\cdot)$ and $\nx V_u$ as two neural networks, denoted by $\mV_0^\tau$ and $\mG^\tau$. We apply gradient based optimization algorithm such as Adam \citep{Kingma2015adam} on $\mL_c$ for the critic parameters $\theta_c$, which consists of all the trainable parameters for $\mV_0^\tau$ and $\mG^\tau$.

To be more specific, at each iteration of the algorithm, we sample $N$ state trajectories of $x_t$ using Euler-Maruyama scheme first. Let $h = \frac{T}{N_T}$ denote the step size. For $i=1,2,\ldots,N$, we sample 
\begin{equation}\label{eq:EM}
x^{(i)}_0 \sim \text{Unif}(\mX) \hspace{0.2in} x^{(i)}_{t + h} = x^{(i)}_t + b(x^{(i)}_t, u^\tau(t,x^{(i)}_t)) h + \sigma(x^{(i)}_t) \xi^{(i)}_t
\end{equation}
for $t=0,h,\ldots,(N_T-1)h$, where $\xi^{(i)}_t \sim N(0,h I_m)$ are i.i.d. sampled Brownian increments. Let us denote the set of these samplings by
\begin{equation}\label{eq:samplings}
\mS_\tau := \{(j h,x_{j h}^{(i)}) ~|~ j=0,1,\ldots N_T ~~ i=1,\ldots N \},
\end{equation}
which is the Monte Carlo sampling from the density $\rho^{u^\tau}$. Then we approximate the TD \eqref{eq:TD} and critic loss using these samples of discretized trajectories through
\begin{equation}\label{eq:TD_numeric}
\TD_i = \sum_{j=0}^{N_t-1} r\parentheses{x^{(i)}_{jh}, u^\tau(jh, x^{(i)}_{jh})} h + g(x^{(i)}_T) - \mV^\tau_0(x^{(i)}_0) - \sum_{j=0}^{N_t-1} \mG^\tau(jh,x^{(i)}_{jh})\tp \sigma(x^{(i)}_{jh}) \xi^{(i)}_{jh}
\end{equation}
and  
\begin{equation}\label{eq:critic_loss_numeric}
\mL_c^\tau \approx \mL_c^{\tau, N} := \dfrac{1}{N} \sum_{i=1}^N \TD_i^2.
\end{equation}

Then, Adam method is applied to update the critic parameters $\theta_c$:
\begin{equation}\label{eq:critic_update}
\theta_c^{\tau + \Delta \tau} = \theta_c^\tau - \alpha_c \Delta \tau \, \text{Adam} \parentheses{\dfrac{\partial \mL_c^{\tau,N}}{\partial \theta}},
\end{equation}
where $\alpha_c$ is the learning speed for the critic, and $\text{Adam}(\cdot)$ denotes the Adam update direction.
\begin{rmk}\label{rmk:critic_BSDE}
The construction of the critic loss end up as a similar method to the deep BSDE method proposed in \cite{han2018solving}. In their work, the authors parametrize the gradient of the solution as a neural network and use a shooting method to match the terminal condition based on Feynman-Kac formulation. In comparison, we start with the TD from RL formalism and minimized the squared expectation of TD for the critic.
\end{rmk}

\subsection{Policy gradient for the actor} \label{sec:policy_gradient}
Policy gradient \citep{sutton1999policy} is a widely-used gradient based method to improve the parametrized policy strategy in RL,  which can be viewed as a local dynamic programming method.

Our starting point is the following proposition (with proof in Appendix \ref{sec:props}), which gives the explicit functional derivative (i.e., gradient) of the cost w.r.t.{} the control function.

\begin{prop}\label{prop:cost_derivative}
Let $b,r \in C^2(\mX \times \RR^{n'})$ and $g,\sigma \in C^2(\mX)$. Let $u \in C^{1,2}([0,T];\mX)$ be a control function and $V_u$ be the corresponding value function. Let $\rho^u(t, \cdot)$ be the density for the state process $x_t$ starting with uniform distribution in $\mX$ and following the SDE \eqref{eq:SDE_X} under control $u$. Then, for any $t, x \in \mX$
\begin{equation}\label{eq:actor_derivative}
\fd{J[u]}{u}(t, x)  = - \rho^u(t,x) ~\nbu G\parentheses{t, x, u(t, x), -\nx V_u(t,x)},
\end{equation}
where $\fd{}{u}$ denotes the $L^2$ first order variation w.r.t. the function $u(\cdot,\cdot)$. As a generalization, 
\begin{equation}\label{eq:dVdu}
\fd{V_u(s,y)}{u}(t, x)  = - \mathds{1}_{\{t \ge s\}} \, p^u(t,x;s,y) ~\nbu G\parentheses{t, x, u(t, x), -\nx V_u(t,x)}
\end{equation}
for all $t,s$ and $x,y \in \mX$, where $p^u(t,x;s,y)$ is the fundamental solution to the Fokker-Planck equation \eqref{eq:FokkerPlanck}.
\end{prop}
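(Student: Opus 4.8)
The plan is to turn the statement into a linear PDE duality computation, using that the cost is a pairing between the running and terminal costs and the state density, and that the value function $V_u$ is exactly the dual (adjoint) variable for the linearized Fokker--Planck dynamics. I would prove the more general identity \eqref{eq:dVdu} first and then recover \eqref{eq:actor_derivative} as a corollary: since $x_0\sim\mathrm{Unif}(\mX)$ gives $J[u]=\int_\mX V_u(0,y)\,\rd y$ and $\int_\mX p^u(t,x;0,y)\,\rd y=\rho^u(t,x)$, integrating \eqref{eq:dVdu} over $y$ at $s=0$ pulls the ($y$-independent) factor $\nbu G$ out and collapses the fundamental solution into $\rho^u$.

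For \eqref{eq:dVdu}, first I would represent the value function through the fundamental solution of \eqref{eq:FokkerPlanck} started from $\delta_y$ at time $s$,
$$V_u(s,y)=\int_s^T\!\!\int_\mX r(x,u(t,x))\,p^u(t,x;s,y)\,\rd x\,\rd t+\int_\mX g(x)\,p^u(T,x;s,y)\,\rd x.$$
Replacing $u$ by $u+\ve\eta$ and differentiating at $\ve=0$ gives two pieces: a direct term $\int_s^T\!\!\int_\mX \inner{\nbu r}{\eta}\,p^u\,\rd x\,\rd t$ from the explicit control dependence of $r$, and an indirect term in which the density variation $\delta p:=\partial_\ve p^{u+\ve\eta}|_{\ve=0}$ is paired against $r$ and $g$. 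Because only the drift carries the control ($\sigma$, hence $D$, is control-free), $\delta p$ solves the linearized equation $\partial_t\delta p=\mI_u^\dagger\delta p-\nx\cdot\big((\nbu b\,\eta)\,p^u\big)$ with $\delta p(s,\cdot)=0$.

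The crux is to remove $\delta p$ using $V_u$ as the dual variable. Rewriting the HJ equation \eqref{eq:HJ} as $\partial_t V_u+\mI_u V_u+r=0$ with $V_u(T,\cdot)=g$, I substitute $r=-\partial_t V_u-\mI_u V_u$ and $g=V_u(T,\cdot)$ into the indirect term, integrate by parts once in $t$, and insert the linearized equation for $\partial_t\delta p$. On the torus there are no spatial boundary terms, so the adjoint pairing $\inner{V_u}{\mI_u^\dagger\delta p}_{L^2}=\inner{\mI_u V_u}{\delta p}_{L^2}$ cancels the $\mI_u$-contributions, the two terminal boundary terms cancel, and the initial boundary term vanishes because $\delta p(s,\cdot)=0$; what remains is only $\int_s^T\!\!\int_\mX \inner{\nx V_u}{(\nbu b)\eta}\,p^u\,\rd x\,\rd t=\int_s^T\!\!\int_\mX\inner{(\nbu b)\tp\nx V_u}{\eta}\,p^u\,\rd x\,\rd t$. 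Adding the direct term and reading off the coefficient of $\eta(t,x)$ yields $\fd{V_u(s,y)}{u}(t,x)=\mathds{1}_{\{t\ge s\}}\,p^u(t,x;s,y)\,\big(\nbu r+(\nbu b)\tp\nx V_u\big)$; since $\sigma$ does not depend on $u$, $\nbu G(t,x,u,-\nx V_u)=-(\nbu b)\tp\nx V_u-\nbu r$ equals minus this bracket, and the indicator records that $p^u(t,x;s,y)=0$ for $t<s$, i.e.\ perturbing the control before $s$ cannot affect $V_u(s,y)$.

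I expect the main obstacle to be analytic rather than algebraic: one must justify that $u\mapsto p^u$ is G\^ateaux differentiable with derivative governed by the linearized Fokker--Planck equation above, and that differentiation commutes with the time/space integrals (and with the expectation). This is precisely where the hypotheses $b,r\in C^2(\mX\times\RR^{n'})$, $g,\sigma\in C^2(\mX)$ and the uniform ellipticity $\lambda_{\min}(D)\ge\sigma_0$ enter: standard parabolic theory then provides a smooth, strictly positive fundamental solution $p^u$ with Gaussian-type bounds, which supply the domination needed to interchange limits and the well-posedness of the linearized problem, while compactness of the torus removes all boundary contributions. Once this differentiability of the flow is established, the remaining steps are exactly the integrations by parts sketched above.
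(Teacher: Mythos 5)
Your proposal is correct, and at its core it is the same argument as the paper's: both represent the cost/value through the density (respectively the fundamental solution) of the Fokker--Planck equation, linearize that equation in the control perturbation, and use the HJ equation \eqref{eq:HJ} together with integration by parts on the torus so that all terms involving the density variation cancel, leaving only the $\nbu r + (\nbu b)\tp \nx V_u = -\nbu G$ pairing. The differences are organizational but worth noting. First, you prove \eqref{eq:dVdu} and obtain \eqref{eq:actor_derivative} by integrating over $y$ at $s=0$ (using $\int_\mX p^u(t,x;0,y)\,\rd y=\rho^u(t,x)$), which avoids duplicating the computation; the paper proves \eqref{eq:actor_derivative} first and then repeats the argument for \eqref{eq:dVdu}. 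Second, your bookkeeping is an explicit adjoint pairing, substituting $r=-\pt V_u-\mI_u V_u$ and $g=V_u(T,\cdot)$ and cancelling via $\inner{V_u}{\mI_u^\dagger \delta p}_{L^2}=\inner{\mI_u V_u}{\delta p}_{L^2}$, whereas the paper groups terms so that the coefficient of $\pve\rho^{\ve}$ is exactly the left-hand side of the HJ equation and hence vanishes; these are the same cancellation written differently. The one place the paper is more careful than your sketch, and which your chosen order makes unavoidable rather than optional: for \eqref{eq:dVdu} the initial datum is $p^{u^\ve}(s,\cdot\,;s,y)=\delta_y$, so classical differentiability in $\ve$ at the initial time is not available; the paper circumvents this by first pairing with an arbitrary smooth density $\psi(y)$, proving the identity for $\int_\mX V^{\ve}(s,y)\psi(y)\,\rd y$, and then reading off \eqref{eq:dVdu}. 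Your appeal to interior parabolic smoothing and Gaussian (Aronson-type) bounds for $t>s$ can be made to work, but to be complete you should either carry out that domination argument near $t=s^+$ explicitly or adopt the paper's mollification by $\psi$, which is the cheaper fix.
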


\begin{rmk}
Recall that $G$ is the generalized Hamiltonian defined in \eqref{eq:generalized_Hamiltonian}, with five arguments as inputs
\begin{equation*}
G\parentheses{t,x,u,p,P} = \frac12 \Tr\parentheses{P \sigma(x) \sigma(x)\tp} + \inner{p}{b(x,u)} - r(x,u)
\end{equation*}
and $\nbu G$ denotes the partial derivative of $G$ w.r.t.{} its third argument (as a vector in $\RR^{n'}$). In \eqref{eq:actor_derivative}, $\nbu G$ only has four inputs since in our setting that $\sigma$ not depending on $u$, $\nbu G$ does not depend on $P = -\nx^2 V_u(t,x)$. 
More generally, the diffusion coefficient $\sigma(x,u)$ also depends on the control, a more general version of Proposition~\ref{prop:cost_derivative} in such setting is considered in \cite{zhou2023policy}.
\end{rmk}

Let us denote $u^\tau$ the control function along the learning process, with $\tau$ being the algorithm time.
Motivated by Proposition \ref{prop:cost_derivative}, the idealized policy gradient dynamic is the $L^2$ gradient flow of $J[u]$, given by
\begin{equation}\label{eq:PG_ideal}
\dfrac{\rd}{\rd \tau} u^{\tau}(t, x) = - \fd{J}{u^{\tau}}(t, x) = \rho^{u^{\tau}}(t,x) ~\nbu G\parentheses{t, x, u^\tau(t, x), -\nx V_{u^\tau}(t,x)}.
\end{equation}
However, this requires instantaneous evaluation of the gradient of value function $\nx V_{u^\tau}$, which is usually not accessible. Therefore, in the actor-critic framework, we use the estimate of the gradient $\mG^\tau$ instead:
\begin{equation}\label{eq:actor_dynamic}
\dfrac{\rd}{\rd \tau} u^{\tau}(t, x) = \alpha_a \rho^\tau(t,x) ~\nbu G\parentheses{t, x, u^\tau(t, x), -\mG^\tau(t,x)},
\end{equation}
where we add $\alpha_a > 0$ as the learning rate for the actor, and we have used the shorthand $\rho^\tau$ for $\rho^{u^\tau}$. If we denote $\Delta \tau$ as the step size, then a gradient descent discretization for \eqref{eq:actor_dynamic} is
\begin{equation}\label{eq:actor_dynamic_GD}
u^{\tau+\Delta\tau}(t,x) = u^\tau(t,x) + \Delta \tau \alpha_a \rho^\tau(t,x) ~\nbu G\parentheses{t, x, u^\tau(t, x), -\mG^\tau(t,x)}.
\end{equation}

It is expensive to implement the gradient descent method \eqref{eq:actor_dynamic_GD} directly, because it involves solving the (high dimensional) Fokker-Planck equation for $\rho^\tau$. Therefore, in practice, we consider a stochastic version of gradient descent that updates the control function through
\begin{equation}\label{eq:actor_dynamic_SGD}
\begin{aligned}
u^{\tau + \Delta\tau}(t,x_t) 
& = u^\tau(t,x_t) + \Delta \tau \alpha_a \nbu G\parentheses{t, x_t, u^\tau(t, x_t), -\mG^\tau(t,x_t)},
\end{aligned}
\end{equation}
where $x_t \sim \rho^\tau(t, \cdot)$. Note that on average the update of the stochastic gradient is the same with \eqref{eq:actor_dynamic_GD}. Then, we can further use the scheme \eqref{eq:EM} to sample trajectories for $x_t$, which gives the set $\mS_\tau$ (see \eqref{eq:samplings}).

Let us parametrize the control function by $u(t,x; \theta_a)$, hence $u^\tau(t,x)$ is represented by $u(t,x;\theta_a^\tau)$. Then, the stochastic gradient descent \eqref{eq:actor_dynamic_SGD} could be further approximated through least square using the following loss function w.r.t. $\theta$ 
\begin{equation}\label{eq:LSreg}
\mL_a(\theta; \theta_a^\tau) = \sum_{(t,x) \in \mS_\tau} \abs{u(t,x;\theta) - u(t,x;\theta_a^\tau) - \Delta \tau \alpha_a\, \nbu G\parentheses{t,x,u(t,x;\theta_a^\tau),-\mG^{\tau}(t,x)}}^2.
\end{equation}

Consequently, the update for actor parameters is given by 
\begin{equation}\label{eq:actor_update}
\theta_a^{\tau+\Delta\tau} = \theta_a^\tau - \alpha_a \Delta \tau \, \text{Adam}\parentheses{\pd{\mL_a}{\theta}(\theta = \theta^\tau_a; \theta^\tau_a)}.
\end{equation}

We remark that there is an alternative approach of directly solving the optimal control problem. In this approach, we discretize the state dynamic \eqref{eq:SDE_X} and the objective function \eqref{eq:cost} directly and then apply optimization algorithm like Adam on the control parameter. This vanilla method doesn't even require policy evaluation and can be viewed as a discretize-optimize method.
In comparison, we give the expression for policy gradient \eqref{eq:actor_dynamic} first and then discretize it with \eqref{eq:actor_update}, so our algorithm is an optimize-discretize method. 
Our optimize-discretize method fits the continuous gradient flow better because we discretize the gradient flow directly. As Stuart points out, the guiding principle in numerical analysis is to avoid discretization until the last possible moment \cite{stuart2010inverse}. Onken and Ruthotto also studies this issue and find that a discretize-optimize method can achieve performance similar optimize-discretize method only with careful numerical treatment \cite{onken2020discretize}. We also verify that our optimize-discretize method gives better performance in the numerical example, as in Section~\ref{sec:example}.

We summarize the actor-critic algorithm in the pseudo-code Algorithm \ref{alg:AC}.
\begin{algorithm}[ht]
\caption{Actor-critic solver for the optimal control problem}\label{alg:AC}
\begin{algorithmic}
\Require Number of iterations $k_{end}$, learning rates $\alpha_a$, $\alpha_c$, step size $\Delta \tau$, batch size $N$, number of time intervals $N_T$
\Ensure value function and control 
\State initialization: $\theta_a$, $\theta_c$
\For{$k=0$ \textbf{to} $k_{end}-1$}
\State $\tau = k \Delta \tau$
\State For $i=1,\ldots, N$,  \Comment{sample state trajectories and get $\mS_\tau$}\\
\hspace{0.4in} sample $x^{(i)}_0 \sim \text{Unif}(\mX)$ and $x^{(i)}_{t+h} = x^{(i)}_t + b(x^{(i)}_t, u(t,x^{(i)}_t; \theta^\tau_a)) h + \sigma(x^{(i)}_t) \xi^{(i)}_t$ 
\State $ \TD_i = \sum_{j=0}^{N_t-1} r\parentheses{x^{(i)}_{jh}, u(jh, x^{(i)}_{jh};\theta_a^\tau)} h + g(x^{(i)}_T) $ \Comment{Compute the TD}\\
\hspace{0.6in} $-\mV^\tau_0(x^{(i)}_0) -\sum_{j=0}^{N_t-1} \mG^\tau(jh,x^{(i)}_{jh})\tp \sigma(x^{(i)}_{jh}) \xi^{(i)}_t $ 
\State $\theta_c^{\tau + \Delta \tau} = \theta_c^\tau - \alpha_c \Delta \tau \, \text{Adam}\parentheses{\partial_{\theta_c} \frac{1}{N} \sum_{i=1}^N \TD_i^2}$ \Comment{update critic parameter}
\medskip
\State $\theta_a^{\tau+\Delta\tau} = \theta_a^\tau - \alpha_a \Delta \tau \, \text{Adam} \parentheses{\partial_{\theta} \mL_a(\theta^\tau_a;\theta^\tau_a)}$ \Comment{update actor parameter}
\EndFor
\end{algorithmic}
\end{algorithm}


\section{The actor-critic flow} \label{sec:flow}
Let $\Delta\tau \to 0$, then the discrete algorithm converges to the actor-critic flow with continuous time, for which we will carry out the analysis.
First, we define the critic flow as the $L^2$-gradient flow of the critic loss $\mL_c$: 
\begin{equation}\label{eq:critic_dynamic}
\begin{aligned}
\dfrac{\rd}{\rd \tau} \mV_0^\tau(x) &= - \alpha_c \fd{\mL_c^\tau}{\mV_0}(x), \\
\dfrac{\rd}{\rd \tau} \mG^\tau(t,x) &= -\alpha_c \fd{\mL_c^\tau}{\mG}(t,x),
\end{aligned}
\end{equation}
which is the analog of the critic gradient descent \eqref{eq:critic_update} in continuous time. Here, we omit the neural network parametrization, and focus on the gradient flow of the two functions. Actually, the reason we use Adam method for optimization is that it is scale free \citep{Kingma2015adam}. This property ensures that different weight and bias parameters in a neural network are changing with a similar speed in training, which better approximates the gradient flow.

Note that we are treating $\mV_0$ and $\mG$ as two independent functions (with their own parameterization) in the gradient flow, without imposing any constraint for consistency, while the ground truth should satisfy $\nx \mV_0^\tau(x) = \mG^\tau(0,x)$. We will justify this treatment in Proposition \ref{prop:critic0} and Theorem \ref{thm:critic_improvement} below. To understand the critic flow, we observe that 
\begin{equation}\label{eq:critic_loss2}
\begin{aligned}
& \quad \mL_c^\tau = \frac12 \EE \sqbra{TD^2} \\
& = \frac12 \EE\sqbra{\parentheses{V_{u^\tau}(0,x_0) - \mV_0^\tau(x_0) + \int_0^T \parentheses{\nx V_{u^\tau}(t, x_t) - \mG^\tau(t,x_t)} \tp \sigma(x_t) \,\rd W_t}^2} \\
& = \frac12 \EE \sqbra{ \parentheses{\mV_0^\tau(x_0) - V_{u^\tau}(0, x_0)}^2 + \int_0^T \abs{ \sigma(x_t)\tp \parentheses{  \mG^\tau(t, x_t) - \nx V_{u^\tau}(t,x_t) } }^2 \rd t }\\
& = \frac12 \int_\mX \parentheses{\mV_0^\tau(x) - V_{u^\tau}(0,x)}^2 \,\rd x \\
& \qquad\qquad + \frac12 \inttx{\rho^{u}(t,x) \abs{\sigma(x)\tp \parentheses{\mG^\tau(t, x) - \nx V_{u^\tau}(t,x)}}^2} \\
& =: \mL_0^\tau + \mL_1^\tau,
\end{aligned}
\end{equation}
where we have substituted \eqref{eq:Ito} into \eqref{eq:TD} in the second equality, and we have applied It\^o's isometry in the third equality. Thus, the critic loss is decomposed into $\mL_0^\tau$ and $\mL_1^\tau$, which characterize the error for $\mV_0$ and $\mG$ respectively. This decomposition leads to the following proposition.

\begin{prop}\label{prop:critic0}
Suppose that $\mV_0^\tau$ and $\mG^\tau$ are two functions such that $\mL_c^\tau = \frac12 \EE[\TD^2] = 0$, where
\begin{equation*}
\TD =  \int_0^T r(x_t, u_t) \,\rd t + g(x_T) - \mV_0^\tau(x_0) - \int_0^T \mG^\tau(t, x_t)\tp \sigma(x_t) \,\rd W_t.
\end{equation*}
Then $\mV_0^\tau = V_{u^\tau}(0,\cdot)$ and $\mG^\tau = \nx V_{u^\tau}$.
\end{prop}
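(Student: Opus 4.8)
The plan is to exploit the decomposition of the critic loss already established in \eqref{eq:critic_loss2}, namely $\mL_c^\tau = \mL_0^\tau + \mL_1^\tau$, where both summands are manifestly nonnegative integrals. Since $\mL_c^\tau = 0$ by hypothesis, and $\mL_0^\tau, \mL_1^\tau \ge 0$, each term must vanish separately. The conclusion $\mV_0^\tau = V_{u^\tau}(0,\cdot)$ and $\mG^\tau = \nx V_{u^\tau}$ then follows by arguing that a nonnegative integral of a squared quantity (weighted by a strictly positive density and a uniformly elliptic matrix) is zero only if the integrand vanishes almost everywhere.

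First I would invoke \eqref{eq:critic_loss2} to write
\begin{equation*}
0 = \mL_c^\tau = \frac12 \int_\mX \parentheses{\mV_0^\tau(x) - V_{u^\tau}(0,x)}^2 \,\rd x + \frac12 \inttx{\rho^{u^\tau}(t,x) \abs{\sigma(x)\tp \parentheses{\mG^\tau(t, x) - \nx V_{u^\tau}(t,x)}}^2}.
\end{equation*}
Both integrals are nonnegative, so both equal zero. From the vanishing of $\mL_0^\tau$, the first integrand $\parentheses{\mV_0^\tau(x) - V_{u^\tau}(0,x)}^2$ is zero for a.e. $x \in \mX$, giving $\mV_0^\tau = V_{u^\tau}(0,\cdot)$ (and by continuity, everywhere). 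For the second term, I would use the uniform ellipticity assumption on $D(x) = \frac12 \sigma(x)\sigma(x)\tp$: since $\lambda_{\min}(x) \ge \sigma_0 > 0$, the matrix $\sigma(x)\sigma(x)\tp$ is strictly positive definite, hence $\abs{\sigma(x)\tp w}^2 = w\tp \sigma(x)\sigma(x)\tp w \ge 2\sigma_0 \abs{w}^2$ for any vector $w$. Applying this with $w = \mG^\tau(t,x) - \nx V_{u^\tau}(t,x)$ controls the $L^2$ deviation of $\mG^\tau$ from $\nx V_{u^\tau}$ from below.

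Consequently, since $\rho^{u^\tau}(t,x) > 0$ on $[0,T] \times \mX$ (the density is strictly positive under the uniformly elliptic, nondegenerate diffusion with uniform initial data), the vanishing of $\mL_1^\tau$ forces $\abs{\mG^\tau(t,x) - \nx V_{u^\tau}(t,x)}^2 = 0$ for a.e. $(t,x)$, and thus $\mG^\tau = \nx V_{u^\tau}$. The main subtlety — and the step I would be most careful about — is justifying the strict positivity of the density $\rho^{u^\tau}$ on the whole domain, which is what makes the weighted integral faithfully detect pointwise deviation; this rests on the smoothness of the coefficients together with the uniform ellipticity assumed throughout the paper, so that the Fokker--Planck fundamental solution is a genuine positive heat-kernel-type density. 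Given these facts, the pointwise equalities follow and the proof concludes.
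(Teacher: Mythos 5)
Your proposal is correct and follows essentially the same route as the paper's own proof: the decomposition $\mL_c^\tau = \mL_0^\tau + \mL_1^\tau$ from \eqref{eq:critic_loss2}, nonnegativity forcing both terms to vanish, and then uniform ellipticity of $\sigma(x)\sigma(x)\tp$ together with the strict positivity of $\rho^{u^\tau}$ (Proposition \ref{prop:rho}) to conclude $\mG^\tau = \nx V_{u^\tau}$. You simply spell out the pointwise-vanishing argument in more detail than the paper's terse version, which is fine.
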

\begin{proof} According to \eqref{eq:critic_loss2}, $\mL_c^\tau=0$ implies $\mL_0^\tau=\mL_1^\tau=0$. Accordingly, $\mL_0^\tau=0$ indicates $\mV_0^\tau = V_{u^\tau}(0,\cdot)$. The uniform ellipticity of $\sigma(x)$ and $\mL_1^\tau=0$ implies $\mG^\tau = \nx V_{u^\tau}$. (Here, we used the fact that $\rho(t,x) > 0$, which will be justified in Proposition \ref{prop:rho}.)
\end{proof}

Moreover, the decomposition \eqref{eq:critic_loss2} also implies that the critic dynamic \eqref{eq:critic_dynamic} is equivalent to two separate $L^2$ gradient flow of $\mL_0^\tau$ and $\mL_1^\tau$ respectively.
Combining the critic \eqref{eq:critic_dynamic} and actor \eqref{eq:actor_dynamic} dynamics, we arrive at the actor-critic flow.
\begin{subequations}\label{eq:joint_dynamic}
\begin{align}
\dfrac{\rd}{\rd \tau} \mV_0^\tau(x) & = - \alpha_c \fd{\mL_0^\tau}{\mV_0}(x) = - \alpha_c \parentheses{\mV_0^\tau(x) - V_{u^\tau}(0,x)} \label{eq:V0dynamic}\\
\dfrac{\rd}{\rd \tau} \mG^\tau(t,x) & = - \alpha_c \fd{\mL_1^\tau}{\mG}(t,x) = - \alpha_c \, \rho^\tau(t,x) \,\sigma(x)\sigma(x)\tp \parentheses{\mG^\tau(t, x) - \nx V_{u^\tau}(t,x)} \label{eq:Gdynamic}\\
\dfrac{\rd}{\rd \tau} u^{\tau}(t, x) &= \alpha_a \rho^\tau(t,x) ~\nbu G\parentheses{t, x, u^{\tau}(t, x), -\mG^\tau(t,x)}.
\end{align}
\end{subequations}
We recall that $\rho^\tau = \rho^{u^\tau}$ is the density function for the state dynamic under control $u^\tau$, which satisfies the FP equation.

\section{Theoretical analysis for the actor-critic flow}\label{sec:theory}
In this section, we give theoretical analysis for the actor-critic flow \eqref{eq:joint_dynamic}, showing that it converges linearly to the optimum. Most of the proofs are deferred to the appendix. Let us start with some technical assumptions. We will give several remarks on these assumptions at the beginning of the appendix.

\begin{assump}\label{assump:basic}
Assume the followings hold.
\begin{enumerate}
\item $r$ and $b$ are smooth with $C^{4}(\mX \times \RR^{n'})$ norm bounded by some constant $K$. 
\item $g$ and $\sigma$ are smooth with $C^{4}(\mX)$ norm bounded by $K$.
\end{enumerate}
\end{assump}

\begin{assump}\label{assump:u_smooth} The parametrizations and their derivatives are bounded by $K$:
$$\norm{u^{\tau}}_{C^{2,4}([0,T]; \mX)}, \, \norm{\mV_0^\tau}_{C^2(\mX)} , \, \norm{\mG^\tau}_{C^{1,1}([0,T]; \mX)} \le K.$$
\end{assump}

Let us define a set 
$$\mU = \curlybra{u(t,x) ~\Big|~ u \text{ is smooth and} \norm{u}_{C^{2,4}([0,T]; \mX)} \le K} $$
to include all the regular control functions we consider.

We present a proposition about the density function next.

\begin{prop}\label{prop:rho}
Let Assumption \ref{assump:basic} hold. Let $u \in \mU$ and $\rho^u$ be the solution to the Fokker Planck equation \eqref{eq:FokkerPlanck} with initial condition $\rho^u(0,x) \equiv 1$. Then $\rho^u(t,x)$ has a positive lower bound $\rho_0$ and an upper bound $\rho_1$ that only depend on $n$, $T$, and $K$. $\abs{\nx\rho(t,x)}$ also has a upper bound $\rho_2$ that only depends on $n$, $T$, and $K$.
\end{prop}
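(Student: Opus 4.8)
The plan is to treat the Fokker--Planck equation \eqref{eq:FokkerPlanck} as a single linear, uniformly parabolic equation for $\rho = \rho^u$ on the compact boundaryless manifold $\mX$, and to read off all three bounds from the classical maximum principle and parabolic Schauder theory, checking at each step that the constants depend on the coefficients only through $n$, $T$, $K$ (together with the fixed ellipticity constant $\sigma_0$). First I would rewrite \eqref{eq:FokkerPlanck} in non-divergence form. Expanding the divergence and the double derivative, using the symmetry of $D$, gives
\[
\partial_t \rho = \sum_{i,j} D_{ij}(x)\,\partial_i\partial_j \rho + \sum_i \tilde b_i(t,x)\,\partial_i \rho + c(t,x)\,\rho,
\]
with $\tilde b_i = 2\sum_j \partial_j D_{ij} - b_i$ and $c = \sum_{i,j}\partial_i\partial_j D_{ij} - \sum_i \partial_i b_i$, where $b_i$ and its spatial derivatives are evaluated along $(x,u(t,x))$. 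The point making every constant below uniform in $u$ is that, for $u \in \mU$, the chain rule bounds $b(x,u(t,x))$ and its spatial derivatives up to second order, together with $D$ and its derivatives, solely in terms of $K$; hence $D_{ij}, \tilde b_i, c$ are bounded in sup norm by some $C_0 = C_0(n,K)$, while $D \succeq \sigma_0 I$ keeps the operator uniformly elliptic.

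For the upper and lower bounds I would apply the maximum and minimum principles to this equation with initial data $\rho(0,\cdot)\equiv 1$. Writing $m(t) = \min_x \rho(t,x)$ and $M(t)=\max_x\rho(t,x)$ (attained, since $\mX$ is compact and $\rho$ is smooth by parabolic regularity), at an interior spatial extremum the gradient vanishes and the Hessian is semidefinite, so the second-order term $\sum_{ij}D_{ij}\partial_i\partial_j\rho$ has a favorable sign. This yields, in the sense of Dini derivatives, $M'(t)\le \|c\|_\infty M(t)$ and $m'(t)\ge -\|c\|_\infty m(t)$. Since $m(0)=M(0)=1$, Gronwall comparison gives $\rho_0 := e^{-C_0 T} \le \rho(t,x) \le e^{C_0 T} =: \rho_1$, both depending only on $n,T,K$. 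Equivalently, one passes to $v = e^{\mp C_0 t}\rho$, obtaining an equation with a sign-definite zeroth-order coefficient, and invokes the weak maximum principle directly.

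Finally, for the gradient bound I would invoke parabolic Schauder estimates (interior in space, up to the initial time) for the equation above: its coefficients lie in $C^{\alpha}$ with norms controlled by $K$, and the initial datum $\rho(0,\cdot)\equiv 1$ is smooth and constant, hence trivially compatible, so $\|\rho\|_{C^{1+\alpha/2,\,2+\alpha}([0,T]\times\mX)}$ is bounded by a constant depending only on $n,T,K,\sigma_0$; in particular $|\nx\rho|\le \rho_2$. Alternatively, a Bernstein argument applied to $|\nx\rho|^2$ via the maximum principle delivers the same conclusion with a more explicit constant.

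The genuine obstacle is not any single PDE estimate---each is textbook uniformly-parabolic theory on a torus---but rather the bookkeeping that certifies the resulting constants depend on $u$ only through $K$. This reduces to the uniform-in-$u$ control of the coefficient norms of the composed field $b(x,u(t,x))$ (and of $c$, $\tilde b$) via the chain rule, which is exactly where Assumption \ref{assump:basic}, Assumption \ref{assump:u_smooth}, and the definition of $\mU$ are used; once this uniformity is in place the three bounds follow as described.
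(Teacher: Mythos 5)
Your proof is correct, but it takes a genuinely different route from the paper's. For the two-sided bounds the paper does not use the maximum principle at all: it invokes Aronson's two-sided Gaussian estimates for the fundamental solution of a uniformly parabolic equation on $\RR^n$, periodizes the kernel onto the torus, and integrates the Gaussian envelopes in $y$ to produce $\rho_0$ and $\rho_1$. Your extremum argument on $M(t)=\max_x\rho(t,x)$ and $m(t)=\min_x\rho(t,x)$, after passing to non-divergence form with the zeroth-order coefficient $c$ controlled through the chain rule on $b(x,u(t,x))$, is more elementary and yields the explicit constants $e^{\pm C_0 T}$; note only that the step $c\rho \ge -\|c\|_\infty \rho$ at the minimum presupposes $m(t)\ge 0$, so a first-crossing/continuity argument (if $m$ ever hit $0$, Gronwall on the preceding interval gives $m \ge e^{-C_0 t}>0$, a contradiction) should be mentioned, and $M(t)$, $m(t)$ handled via Dini derivatives as you do. For the gradient bound the paper again avoids Schauder theory: it time-reverses the equation, represents $\rho^u(T-t,\cdot)$ by Feynman--Kac with potential $V_b=\sum_{i,j}\partial_i\partial_j D_{ij}-\nx\cdot b$, and obtains the Lipschitz estimate from the Lipschitz continuity of $V_b$ together with the stochastic Gronwall bound \eqref{eq:Gronwall2} on two coupled trajectories started at $x_1$ and $x_2$; your Schauder (or Bernstein) route delivers strictly more --- full $C^{1+\alpha/2,\,2+\alpha}$ control of $\rho$ --- at the price of quoting heavier regularity theory, which the paper in any case cites elsewhere for $V_u$, and it requires H\"older-in-time regularity of the coefficients, supplied by the $C^{2,4}$ bound in the definition of $\mU$. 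Two bookkeeping remarks: the ellipticity constant $\sigma_0$ enters your Schauder constant just as it enters the paper's Aronson constants (both proofs treat $\sigma_0$ as a standing constant even though the statement nominally lists only $n$, $T$, $K$; your sup/inf bounds in fact need only $D\succeq 0$), and Assumption \ref{assump:u_smooth} is not actually needed --- the uniformity in $u$ comes entirely from $u\in\mU$ and Assumption \ref{assump:basic}, which is all the proposition assumes.
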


Next, we will present the analysis for the actor-critic flow. We give our specific choice of parameter here. We set the speed ratio as 
\begin{equation}\label{eq:speed_ratio}
\dfrac{\alpha_a}{\alpha_c} = \kappa := \min\curlybra{\dfrac{\rho_0^2 \sigma_0}{C_5} ,\, \dfrac{\rho_0}{2C_c} ,\, \dfrac{c_c \rho_0 \sigma_0}{C_a} }
\end{equation}
where $\rho_0$ is the lower bound for $\rho^u(t,x)$ given in Proposition \ref{prop:rho}, $\sigma_0$ is the constant for uniform ellipticity. $C_5$, $C_c$, $c_c$, and $C_a$ are constants that only depend on $K, \sigma_0, n, n', m, T$, which will be introduced in Lemma \ref{lem:rho_speed}, Theorem \ref{thm:critic_improvement}, and Theorem \ref{thm:actor_improvement}.

\smallskip 

\begin{rmk}
The choice of a constant speed ratio indicates that our algorithm has single time scale \citep{chen2021single}, meaning that the critic and actor dynamics have speeds of the same order. The analysis of single time scale is more challenging than two time scale bilevel optimization \citep{dalal2018finite}, where we can view the slower dynamic as static when considering the faster one, and the faster dynamics reaches optimum when the slower dynamics is concerned \citep{hong2020two}. The difference of analysis for single- and two- time scale methods have been also reviewed in the context of actor-critic method for LQR in \cite{zhou2023single}.
\end{rmk}

Next, we present our main results, establishing linear convergence  of our actor-critic flow. We characterize the optimality of critical point for the flow first. We will assume that the HJB equation \eqref{eq:HJB} admits a unique solution $V^* \in C^{1,2}([0,T]; \mX)$ and the optimal control function $u^* \in\mU$. The existence, uniqueness, and regularity of HJB equations have been well studied, see for example \cite{yong1999stochastic, mou2019remarks}.

\smallskip 
\begin{theorem}[Critical point for the joint dynamic]
Assume that $G$ is concave in $u$. Let the actor-critic joint dynamic \eqref{eq:joint_dynamic} reaches critical point $\mV^\infty_0$, $\mG^\infty$, and $u^\infty$, then $u^\infty$ is the optimal control and $\mV^\infty_0$ and $\mG^\infty$ are the optimal value function (at $t=0$) and its gradient.
\end{theorem}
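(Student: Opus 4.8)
The plan is to characterize the critical point by setting all three time derivatives in \eqref{eq:joint_dynamic} to zero, peel off the consequences one equation at a time, and then run a verification-type argument to upgrade the resulting stationarity conditions into genuine optimality. The two ingredients that make everything work are the strict positivity of the density (Proposition \ref{prop:rho}) together with the invertibility of $\sigma(x)\sigma(x)\tp$, and the assumed uniqueness of the HJB solution and of the optimal control.

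First I would exploit the two critic equations. Setting the left-hand side of \eqref{eq:V0dynamic} to zero immediately gives $\mV_0^\infty = V_{u^\infty}(0,\cdot)$. Setting the left-hand side of \eqref{eq:Gdynamic} to zero yields $\rho^\infty(t,x)\,\sigma(x)\sigma(x)\tp\parentheses{\mG^\infty(t,x)-\nx V_{u^\infty}(t,x)} = 0$; since $\rho^\infty > 0$ by Proposition \ref{prop:rho} and $\sigma(x)\sigma(x)\tp = 2D(x)$ is invertible by uniform ellipticity, this forces $\mG^\infty = \nx V_{u^\infty}$. Thus at the critical point the decoupled critic functions are exactly the value function and its gradient for the frozen control $u^\infty$. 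I would then turn to the actor equation: setting its right-hand side to zero and again using $\rho^\infty > 0$ gives $\nbu G\parentheses{t, x, u^\infty(t,x), -\mG^\infty(t,x)} = 0$, and substituting $\mG^\infty = \nx V_{u^\infty}$ produces $\nbu G\parentheses{t, x, u^\infty(t,x), -\nx V_{u^\infty}(t,x)} = 0$ for all $(t,x)$. This is only a first-order stationarity condition, so the hypothesis that $G$ is concave in $u$ does the essential work: it promotes the vanishing gradient to a global maximizer, so that $u^\infty$ satisfies the maximum condition \eqref{eq:max1}.

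Finally I would close the argument by verification. By construction $V_{u^\infty}$ satisfies the HJ equation \eqref{eq:HJ}; inserting the maximum condition just obtained replaces the plain Hamiltonian in \eqref{eq:HJ} by its supremum over $u'$, so $V_{u^\infty}$ in fact solves the HJB equation \eqref{eq:HJB} with terminal data $g$. Invoking the assumed uniqueness of the HJB solution $V^*$ gives $V_{u^\infty} = V^*$, whence $\mV_0^\infty = V^*(0,\cdot)$ and $\mG^\infty = \nx V^*$. Since $u^\infty$ attains the argmax of $G\parentheses{t,x,\cdot,-\nx V^*,-\nx^2 V^*}$ and $V^*$ solves the HJB equation, the verification theorem (cf.\ \cite{yong1999stochastic}) identifies $u^\infty$ as an optimal control, and the assumed uniqueness of the optimal control forces $u^\infty = u^*$.

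I expect the main obstacle to be the logical gap between the first-order condition $\nbu G = 0$ and genuine optimality: the critical-point equations only deliver necessary conditions, and it is concavity in $u$ together with uniqueness of the HJB solution that upgrades them to sufficiency through the verification theorem. A secondary point to handle with care is that the argmax in the maximum condition must be shown to single out $u^*$ rather than merely some stationary control; when $G$ is only concave (not strictly), this is resolved by the uniqueness-of-optimal-control hypothesis applied to $u^\infty$ as an optimal control, rather than by uniqueness of the pointwise maximizer.
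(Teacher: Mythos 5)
Your proposal is correct and follows essentially the same route as the paper's proof: set the three right-hand sides of \eqref{eq:joint_dynamic} to zero, use the positive density lower bound from Proposition \ref{prop:rho} together with uniform ellipticity to identify $\mV_0^\infty = V_{u^\infty}(0,\cdot)$ and $\mG^\infty = \nx V_{u^\infty}$, then let concavity of $G$ in $u$ upgrade $\nbu G = 0$ to the maximum condition \eqref{eq:max1}, so that $V_{u^\infty}$ solves the HJB equation \eqref{eq:HJB} and uniqueness yields optimality. Your closing verification-theorem step spelling out $u^\infty = u^*$ via uniqueness of the optimal control is slightly more explicit than the paper's one-line conclusion, but it is the same argument.
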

\begin{proof}
We denote $V_{u^\infty}$ the true value function w.r.t. $u^{\infty}$, i.e. the solution to the HJ equation \eqref{eq:HJ} with $u^\infty$. Since the critic dynamic is static, we know that $\dfrac{\rd}{\rd \tau} \mV^\tau_0=0$ and $\dfrac{\rd}{\rd \tau} \mG^\tau=0$. By \eqref{eq:V0dynamic}, $\mV^{\infty}_0 = V_{u^\infty}(0,\cdot)$. Next,  \eqref{eq:Gdynamic}, Proposition \ref{prop:rho}, and the uniform ellipticity assumption imply $\mG^\infty = \nx V_{u^\infty}$ is the true gradient. Since the actor dynamic \eqref{eq:actor_dynamic} is static, we have $\nbu G\parentheses{t,x,u^\infty(t,x), \mG^\infty(t,x)}=0$. So the concavity of $G$ in $u$ implies $u^\infty(t,x)$ maximizes $G(t,x,\cdot, \nx V_{u^\infty}(t,x), \nx^2 V_{u^\infty}(t,x))$. i.e. the maximize condition \eqref{eq:max1} is satisfied. Therefore, $V_{u^\infty}$ is also the solution of the HJB equation \eqref{eq:HJB}, i.e. the optimal value function.
\end{proof}

Next, we state two theorems for the convergence properties of the critic and the actor dynamics respectively.
\begin{theorem}[Critic improvement]\label{thm:critic_improvement}
Let assumptions \ref{assump:basic}, \ref{assump:u_smooth}, hold. Then
\begin{equation}\label{eq:critic_improvement}
\dfrac{\rd}{\rd \tau} \mL_c^\tau \le -2 \alpha_c \, c_c \, \mL_c^\tau + C_c \dfrac{\alpha_a^2}{\alpha_c} \inttx{\rho^\tau(t,x) \abs{\nbu G \parentheses{t,x,u^\tau(t,x), -\mG^\tau(t,x)}}^2}
\end{equation}
where $c_c= \min\curlybra{\frac12, \rho_0 \sigma_0}$ and $C_c =  \dfrac{C_2 \,\rho_1}{2} + \dfrac{\rho_1^2 K^2 C_2^2}{2 \rho_0 \sigma_0}$.
\end{theorem}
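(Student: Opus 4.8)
The plan is to differentiate the decomposition $\mL_c^\tau=\mL_0^\tau+\mL_1^\tau$ established in \eqref{eq:critic_loss2} along the joint flow \eqref{eq:joint_dynamic}, and to organize the resulting derivative into a \emph{dissipation} part produced by the critic updates of $\mV_0^\tau,\mG^\tau$ and a \emph{drift} part produced by the motion of the learning targets $V_{u^\tau}(0,\cdot)$, $\nx V_{u^\tau}$ and of the weight $\rho^\tau$ as the control $u^\tau$ evolves. Abbreviate the critic errors $e_0:=\mV_0^\tau-V_{u^\tau}(0,\cdot)$ and $e_1:=\mG^\tau-\nx V_{u^\tau}$, so that $\mL_0^\tau=\frac12\intx{e_0^2}$ and $\mL_1^\tau=\frac12\inttx{\rho^\tau\abs{\sigma\tp e_1}^2}$. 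For $\mL_0^\tau$, the chain rule and \eqref{eq:V0dynamic} give
\[
\dfrac{\rd}{\rd\tau}\mL_0^\tau=-2\alpha_c\mL_0^\tau-\intx{e_0\,\dfrac{\rd}{\rd\tau}V_{u^\tau}(0,x)}.
\]

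For $\mL_1^\tau$, differentiating its three $\tau$-dependent ingredients produces the main dissipation term coming from \eqref{eq:Gdynamic},
\[
-\alpha_c\inttx{(\rho^\tau)^2\abs{\sigma\sigma\tp e_1}^2},
\]
together with a value-target drift term $-\inttx{\rho^\tau (\sigma\sigma\tp e_1)\tp\,\frac{\rd}{\rd\tau}\nx V_{u^\tau}}$ and a weight drift term $\frac12\inttx{\big(\frac{\rd}{\rd\tau}\rho^\tau\big)\abs{\sigma\tp e_1}^2}$. Using the uniform ellipticity $\sigma\sigma\tp\succeq 2\sigma_0 I$ and the lower bound $\rho^\tau\ge\rho_0$ from Proposition \ref{prop:rho}, the main dissipation term is at most $-4\alpha_c\rho_0\sigma_0\mL_1^\tau$; combining it with $-2\alpha_c\mL_0^\tau$ yields $-2\alpha_c c_c\mL_c^\tau$ with $c_c=\min\{\frac12,\rho_0\sigma_0\}$, and---crucially---leaves spare dissipation ($\alpha_c\mL_0^\tau$ and $2\alpha_c\rho_0\sigma_0\mL_1^\tau$) available to absorb part of the drift.

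The structural fact that makes the argument close is that every drift term carries a single factor $\alpha_a$, since $\frac{\rd}{\rd\tau}u^\tau=\alpha_a\rho^\tau\nbu G\parentheses{t,x,u^\tau(t,x),-\mG^\tau(t,x)}$ drives all three targets. I would estimate $\frac{\rd}{\rd\tau}V_{u^\tau}(0,\cdot)$ and $\frac{\rd}{\rd\tau}\nx V_{u^\tau}$ through Proposition \ref{prop:cost_derivative}, which pairs the fundamental-solution kernel $p^{u^\tau}$ against $\frac{\rd}{\rd\tau}u^\tau$, followed by a parabolic (Schauder-type) regularity bound under Assumptions \ref{assump:basic}--\ref{assump:u_smooth}; this produces the constant $C_2$ controlling these sensitivities by $\alpha_a$ times a $\rho^\tau$-weighted $L^2$ norm of the policy gradient, and the weight drift term is bounded similarly using the density-speed estimate of Lemma \ref{lem:rho_speed} together with the a priori boundedness of $e_1$ from Assumption \ref{assump:u_smooth}. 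Each drift term is then split by Young's inequality in the schematic form $\theta\,\alpha_c(\text{critic error})^2+\frac{\alpha_a^2}{4\theta\,\alpha_c}(\text{policy gradient})^2$; choosing $\theta$ of order $\rho_0\sigma_0$ forces the critic-error pieces inside the spare dissipation from the previous step (this is the origin of the $\rho_0\sigma_0$ in the denominator of $C_c$), while the policy-gradient pieces assemble into $C_c\frac{\alpha_a^2}{\alpha_c}\inttx{\rho^\tau\abs{\nbu G\parentheses{t,x,u^\tau(t,x),-\mG^\tau(t,x)}}^2}$, with the structure of $C_c=\frac{C_2\rho_1}{2}+\frac{\rho_1^2K^2C_2^2}{2\rho_0\sigma_0}$ reflecting, respectively, the weight drift term and the value-target drift terms.

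The main obstacle is precisely the sensitivity estimate behind $C_2$: one must differentiate the HJ equation \eqref{eq:HJ} (equivalently the adjoint BSDE \eqref{eq:adjoint1}) and the Fokker--Planck equation \eqref{eq:FokkerPlanck} in $\tau$ and bound $\frac{\rd}{\rd\tau}\nx V_{u^\tau}$ and $\frac{\rd}{\rd\tau}\rho^\tau$ uniformly in $\tau$ by the control perturbation $\frac{\rd}{\rd\tau}u^\tau$, using only the coefficient bounds of Assumptions \ref{assump:basic}--\ref{assump:u_smooth} and the two-sided density bounds of Proposition \ref{prop:rho}. Once these sensitivity bounds are secured, the remainder is the constant bookkeeping through Young's inequality described above, which yields \eqref{eq:critic_improvement}.
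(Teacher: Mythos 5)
Your overall architecture is the same as the paper's: the decomposition $\mL_c^\tau=\mL_0^\tau+\mL_1^\tau$, the dissipation constants $-\alpha_c$ and $-4\alpha_c\rho_0\sigma_0$, the Young splits with weight of order $\rho_0\sigma_0$, and the sensitivity constant $C_2$ (the paper realizes it through Lemma \ref{lem:regularity_Vu} applied to the difference quotients $u^{\tau+\Delta\tau}-u^\tau$ together with Fatou's lemma, rather than by differentiating the kernel from Proposition \ref{prop:cost_derivative}; your adjoint-BSDE remark is in fact how Lemma \ref{lem:regularity_Vu} is proved, so this difference is cosmetic). The genuine gap is in your treatment of the weight-drift term $\frac12\inttx{\parentheses{\frac{\rd}{\rd\tau}\rho^\tau}\abs{\sigma\tp e_1}^2}$. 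It cannot be closed the way you describe: bounding it via Lemma \ref{lem:rho_speed} ``together with the a priori boundedness of $e_1$'' produces a stray term of size $O(\alpha_a)$ that is proportional neither to $\mL_c^\tau$ nor to the policy-gradient integral, so nothing on the right-hand side of \eqref{eq:critic_improvement} can absorb it; and no Young split converts it into a policy-gradient-squared piece, because $\frac{\rd}{\rd\tau}\rho^\tau$ is controlled only by the uniform constant $C_5\alpha_a$ of Lemma \ref{lem:rho_speed}, not pointwise by $\nbu G$. The paper's move in \eqref{eq:L1_term1} is different: use the density lower bound $\rho^\tau\ge\rho_0$ of Proposition \ref{prop:rho} to fold the term back into the loss itself, obtaining the bound $\frac{C_5\alpha_a}{\rho_0}\mL_1^\tau$, and then absorb it into the spare dissipation using the standing speed-ratio choice \eqref{eq:speed_ratio}, specifically $\alpha_a/\alpha_c\le\rho_0^2\sigma_0/C_5$ --- a hypothesis your write-up never invokes, though it is exactly why $C_5$ appears in $\kappa$ and why the theorem is read under the parameter choice of Section \ref{sec:theory}.

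A consequence is that your accounting of $C_c$ is wrong: the weight drift leaves no trace in $C_c$ at all. The first piece $\frac{C_2\rho_1}{2}$ comes from the $\mL_0^\tau$ target drift $\frac{\rd}{\rd\tau}V_{u^\tau}(0,\cdot)$ --- the cross term your first display correctly records but which your final bookkeeping then misassigns --- after the split $\frac{\alpha_c}{2}\intx{e_0^2}+\frac{1}{2\alpha_c}\intx{\parentheses{\frac{\rd}{\rd\tau}V_{u^\tau}(0,x)}^2}$ as in \eqref{eq:critic_improvement0}; the second piece $\frac{\rho_1^2K^2C_2^2}{2\rho_0\sigma_0}$ comes from the $\nx V_{u^\tau}$ target drift in $\mL_1^\tau$. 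With the weight drift rerouted through \eqref{eq:speed_ratio} as above, the rest of your argument reproduces the paper's proof.
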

Here, $C_2$ is a constant that will be introduced in Lemma \ref{lem:regularity_Vu} in Appendix \ref{sec:lemmas}. 

\smallskip 
\begin{rmk}\label{rmk:critic_rate}
As a direct corollary for Theorem \ref{thm:critic_improvement}, if $\alpha_a=0$, then by Gronwall inequality, we have
\begin{equation*}
\mL_c^\tau \le \mL_c^0 \exp(-2\alpha_c c_c \tau).
\end{equation*}
This corresponds to a pure policy evaluation process with a fixed control, which shows that the critic flow \eqref{eq:critic_dynamic} has a linear convergence rate. The last term in \eqref{eq:critic_improvement} is an additional error due to the actor (control) dynamic.
\end{rmk}

Next, we present the convergence analysis for the policy gradient flow. We first state another assumption.

\smallskip 
\begin{assump}\label{assump:actor_rate}
There exists a modulus of continuity $\omega: [0,\infty) \to [0,\infty)$ such that
$$\norm{u-u^*}_{L^2} \le \omega(J[u] - J[u^*])$$
for any $u \in \mU$. Here $u^*$ is the optimal control.
\end{assump}

\smallskip 

\begin{theorem}[Actor improvement]\label{thm:actor_improvement}
Let Assumptions \ref{assump:basic}, \ref{assump:u_smooth}, \ref{assump:actor_rate} hold. Assume that $G$ is uniformly strongly concave in $u$. Then the actor dynamic satisfies
\begin{equation}\label{eq:actor_improvement}
\begin{aligned}
\dfrac{\rd}{\rd \tau} J[u^\tau] & \le - \alpha_a \, c_a \parentheses{J[u^\tau] - J[u^*]} + \alpha_a C_a \norm{\nx V_{u^\tau} - \mG^\tau}_{L^2}^2\\
& \quad - \frac12 \alpha_a \rho_0 \inttx{\rho^\tau(t,x) \abs{\nbu G\parentheses{t,x,u^\tau(t,x),-\mG^\tau(t,x)}}^2 \, } 
\end{aligned}
\end{equation}
for positive constants $c_a$ and $C_a=\frac12\rho_1^2 K^2$. The constant $c_a$ depends on $n$, $n'$, $m$, $T$, $K$, and $\omega$.
\end{theorem}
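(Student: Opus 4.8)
The plan is to differentiate $J[u^\tau]$ along the actor flow \eqref{eq:actor_dynamic} and match the three terms on the right-hand side of \eqref{eq:actor_improvement} one by one. Introduce the shorthand $a(t,x) := \nbu G\parentheses{t,x,u^\tau(t,x),-\nx V_{u^\tau}(t,x)}$ for the \emph{exact} policy-gradient direction and $\widehat a(t,x) := \nbu G\parentheses{t,x,u^\tau(t,x),-\mG^\tau(t,x)}$ for the direction actually driving \eqref{eq:actor_dynamic}. Proposition \ref{prop:cost_derivative} gives $\fd{J}{u^\tau}=-\rho^\tau a$, so by the $L^2$ chain rule
\begin{equation*}
\dfrac{\rd}{\rd \tau} J[u^\tau] = \inner{\fd{J}{u^\tau}}{\dfrac{\rd}{\rd\tau}u^\tau}_{L^2} = -\alpha_a \inttx{\parentheses{\rho^\tau(t,x)}^2 \, a(t,x)\cdot\widehat a(t,x)}.
\end{equation*}
I would then apply the polarization identity $a\cdot\widehat a=\tfrac12\abs{a}^2+\tfrac12\abs{\widehat a}^2-\tfrac12\abs{a-\widehat a}^2$, which splits the derivative into a piece in $\abs{a}^2$ (to be fed into a gradient-dominance inequality, producing the optimality-gap term), a negative piece in $\abs{\widehat a}^2$ (which survives as the last term of \eqref{eq:actor_improvement}), and an error piece in $\abs{a-\widehat a}^2$ (the critic-error term).

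The two easy pieces come next. By Proposition \ref{prop:rho}, $(\rho^\tau)^2\ge\rho_0\,\rho^\tau$, so $-\tfrac{\alpha_a}{2}\inttx{(\rho^\tau)^2\abs{\widehat a}^2}$ is bounded above by $-\tfrac12\alpha_a\rho_0\inttx{\rho^\tau\abs{\widehat a}^2}$, exactly the last term of \eqref{eq:actor_improvement}. For the error piece, $G$ is affine in its costate argument, $\nbu G(t,x,u,p)=(\nbu b(x,u))\tp p-\nbu r(x,u)$, whence $a-\widehat a=(\nbu b(x,u^\tau))\tp(\mG^\tau-\nx V_{u^\tau})$ and $\abs{a-\widehat a}\le K\abs{\mG^\tau-\nx V_{u^\tau}}$ by Assumption \ref{assump:basic}; combined with $(\rho^\tau)^2\le\rho_1^2$ this yields $\tfrac{\alpha_a}{2}\inttx{(\rho^\tau)^2\abs{a-\widehat a}^2}\le\alpha_a C_a\norm{\nx V_{u^\tau}-\mG^\tau}_{L^2}^2$ with $C_a=\tfrac12\rho_1^2K^2$, matching the stated constant.

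The main work, and the main obstacle, is the gradient-dominance (Polyak--\L ojasiewicz) inequality needed to turn $-\tfrac{\alpha_a}{2}\inttx{(\rho^\tau)^2\abs{a}^2}$ into $-\alpha_a c_a\,(J[u^\tau]-J[u^*])$. I would establish it via a performance-difference identity: applying It\^o's formula to $V_{u^\tau}$ along the \emph{optimal} state process (density $\rho^{u^*}$) and using the HJ equation \eqref{eq:HJ} for $V_{u^\tau}$, the stochastic integral vanishes in expectation and the terminal values match $g$, giving
\begin{equation*}
J[u^\tau]-J[u^*] = \inttx{\rho^{u^*}(t,x)\sqbra{G\parentheses{t,x,u^*,-\nx V_{u^\tau}} - G\parentheses{t,x,u^\tau,-\nx V_{u^\tau}}}},
\end{equation*}
where the $\tfrac12\Tr(P\sigma\sigma\tp)$ contribution is common to both Hamiltonians and cancels. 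The crucial feature is that the integrand is a Hamiltonian gap evaluated at the \emph{current} gradient $-\nx V_{u^\tau}$, so its $u$-gradient at $u^\tau$ is exactly $a$. Uniform strong concavity of $G$ in $u$ with modulus $\mu>0$ then gives the pointwise estimate $G(\cdot,u^*,-\nx V_{u^\tau})-G(\cdot,u^\tau,-\nx V_{u^\tau})\le\tfrac{1}{2\mu}\abs{a}^2$ (the bound $f(v)-f(w)\le\tfrac{1}{2\mu}\abs{\nabla f(w)}^2$ for strongly concave $f$, valid for any comparison point $v=u^*(t,x)$). Integrating against $\rho^{u^*}\le\rho_1$ yields $J[u^\tau]-J[u^*]\le\tfrac{\rho_1}{2\mu}\inttx{\abs{a}^2}$, and with $(\rho^\tau)^2\ge\rho_0^2$ this closes the argument.

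I expect this gradient-dominance step to be the delicate one. Two points need care: the performance-difference identity requires enough regularity of $V_{u^\tau}$ to justify It\^o's formula and the vanishing of the martingale in expectation (supplied by Assumption \ref{assump:u_smooth} together with parabolic regularity of the linear equation \eqref{eq:HJ}), and one needs a genuinely uniform strong-concavity modulus $\mu$. The clean route above produces the explicit constant $c_a=\mu\rho_0^2/\rho_1$. If instead one carries out the performance-difference identity with the \emph{optimal} value function --- so that $u^*$ enters as the maximizer of $G(t,x,\cdot,-\nx V^*)$ and the density is $\rho^\tau$ --- the direction appearing is $\nbu G(t,x,u^\tau,-\nx V^*)$ rather than $a$, and closing the estimate forces one to absorb $\norm{\nx V^*-\nx V_{u^\tau}}_{L^2}$. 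Bounding this by the control regularity $\norm{u^*-u^\tau}_{L^2}$ and invoking Assumption \ref{assump:actor_rate} to write $\norm{u^*-u^\tau}_{L^2}\le\omega(J[u^\tau]-J[u^*])$ is precisely what makes the resulting $c_a$ depend on the modulus of continuity $\omega$, as in the stated theorem.
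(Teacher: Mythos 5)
Your proposal is correct, and its first half coincides step by step with the paper's own proof: the $L^2$ chain rule via Proposition \ref{prop:cost_derivative}, the polarization identity splitting $-\alpha_a\inttx{(\rho^\tau)^2\, a\cdot\widehat a}$ into the three pieces, the density bounds $(\rho^\tau)^2\ge\rho_0^2$, $(\rho^\tau)^2\ge\rho_0\rho^\tau$, $(\rho^\tau)^2\le\rho_1^2$ from Proposition \ref{prop:rho}, and the estimate $\abs{a-\widehat a}\le K\abs{\mG^\tau-\nx V_{u^\tau}}$ yielding $C_a=\frac12\rho_1^2K^2$ are exactly the paper's computations. Where you genuinely diverge is the gradient-dominance step, and your route is different from --- and simpler than --- the paper's. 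The paper does not use a performance-difference identity: it introduces the pointwise maximizer $u^\diamond$ of $G(t,x,\cdot,-\nx V_u(t,x),-\nx^2V_u(t,x))$, reduces the PL inequality to the criterion $\norm{u^\tau-u^{\tau\diamond}}_{L^2}\ge\mu_0\norm{u^\tau-u^*}_{L^2}$ for $\tau\ge\tau_0$ (equation \eqref{eq:easy_case}), and proves that criterion by contradiction through a long comparison-principle-style doubling argument (Lemma \ref{lem:claim}, showing $V_k\to V^*$), Assumption \ref{assump:actor_rate} to upgrade $J[u_k]\to J[u^*]$ to $\norm{u_k-u^*}_{L^2}\to0$, the quadratic growth bound of Lemma \ref{lem:J_quadratic}, and the H\"older stability of Lemma \ref{lem:quadratic_Vu}; this is precisely why the paper's $c_a$ depends on $\omega$ and is non-explicit. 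Your alternative --- It\^o's formula for $V_{u^\tau}$ along the optimal process combined with the HJ equation \eqref{eq:HJ}, giving $J[u^\tau]-J[u^*]=\inttx{\rho^{u^*}(t,x)\sqbra{G\parentheses{t,x,u^*,-\nx V_{u^\tau}}-G\parentheses{t,x,u^\tau,-\nx V_{u^\tau}}}}$ with the trace terms cancelling since $\sigma$ is $u$-independent, followed by the strong-concavity bound $f(v)-f(w)\le\frac{1}{2\mu_G}\abs{\nabla f(w)}^2$ --- is sound under the standing hypotheses: $V_{u^\tau}\in C^{1,2}$ justifies It\^o with a mean-zero martingale (bounded integrand), $u^*\in\mU$ puts $\rho^{u^*}\le\rho_1$ in force, and $u^\tau(t,x)$, $u^*(t,x)$ lie in the $K$-ball where uniform strong concavity is assumed; note also that although the Hamiltonian gap may be negative pointwise, your majorant $\frac{1}{2\mu_G}\abs{a}^2$ is nonnegative, so the weighting by $\rho^{u^*}\le\rho_1$ is legitimate. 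What your approach buys is substantial: an explicit, $\omega$-free constant $c_a=\mu_G\rho_0^2/\rho_1$, validity for all $\tau$ (the paper's criterion \eqref{eq:easy_case} is only established for $\tau\ge\tau_0$), and no use whatsoever of Assumption \ref{assump:actor_rate} or Lemmas \ref{lem:J_quadratic}, \ref{lem:quadratic_Vu}, \ref{lem:claim}; the paper's heavier machinery is of the kind that could survive weaker solution concepts where the exact performance-difference identity is unavailable, but in the smooth torus setting of this paper your argument proves a strictly stronger statement with far less effort.
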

We clarify that by uniformly strongly concave, we mean there exists a constant $\mu_G>0$ such that the family of functions $G(t,x,\cdot,p,P)$ is $\mu_G-$strongly concave for all $(t,x,p,P)$ within the range given by Assumption \ref{assump:basic}. Note that even with such assumption, the optimal control problem itself is not convex, i.e., $J[\cdot]$ is not convex in $u$. Therefore, the linear convergence does not directly follow from convexity and is nontrivial. We also remark that our assumption on the concavity of $G$ is weaker than those imposed in \cite{reisinger2022linear}, which require that the running cost is sufficiently convex or the time span $[0,T]$ is sufficiently short. 

\smallskip 

\begin{rmk}
This result establishes that the policy is improving at a linear rate $\alpha_a c_a$, but with an additional term $\alpha_a C_a \norm{\nx V_{u^\tau} - \mG^\tau}_{L^2}^2$ coming from the critic error.
An explicit expression for $c_a$ will require more information, such as the explicit formula of $\omega(\cdot)$ in Assumption \ref{assump:actor_rate}.

Additionally, without Assumption \ref{assump:actor_rate}, we can still show that the actor-critic flow converges, but without a rate, i.e., $\lim_{\tau\to0} \mL^\tau = 0$ in Theorem \ref{thm:joint_rate}. We refer the reader to \cite[Theorem 2]{zhou2023policy} for details.
\end{rmk}

Finally, we combine Theorem \ref{thm:critic_improvement} and Theorem \ref{thm:actor_improvement} and present our main theorem, giving a global linear convergence for the joint actor-critic dynamic.

\smallskip 
\begin{theorem}[Convergence of the actor-critic flow]\label{thm:joint_rate}
Let assumptions \ref{assump:basic}, \ref{assump:u_smooth}, \ref{assump:actor_rate} hold. Assume that $G$ is uniformly strongly concave in $u$. Let
\begin{equation*}
\mL^\tau = (J[u^\tau] - J[u^*]) + \mL_c^\tau
\end{equation*}
be the sum of the actor gap and critic loss. Then,
\begin{equation}\label{eq:main_thm}
\mL^\tau \le \mL^0 \exp(-c \tau)
\end{equation}
with $c = \min\{ \alpha_c c_c, \alpha_a  c_a \} >0$, where $c_c$ and $c_a$ are given by Theorem \ref{thm:critic_improvement} and \ref{thm:actor_improvement} respectively.
\end{theorem}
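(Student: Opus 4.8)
The plan is to combine the two differential inequalities of Theorem~\ref{thm:critic_improvement} and Theorem~\ref{thm:actor_improvement} into a single closed inequality for $\mL^\tau$ and then invoke Gr\"onwall. Introduce the two quantities shared by the two estimates, the weighted policy-gradient term
\[
A^\tau := \inttx{\rho^\tau(t,x)\abs{\nbu G\parentheses{t,x,u^\tau(t,x),-\mG^\tau(t,x)}}^2}
\]
and the critic error $B^\tau := \norm{\nx V_{u^\tau} - \mG^\tau}_{L^2}^2$. Adding \eqref{eq:critic_improvement} to \eqref{eq:actor_improvement} and using $\alpha_a^2/\alpha_c = \kappa\alpha_a$ from \eqref{eq:speed_ratio} yields
\[
\dfrac{\rd}{\rd\tau}\mL^\tau \le -\alpha_a c_a\parentheses{J[u^\tau]-J[u^*]} - 2\alpha_c c_c\mL_c^\tau + \alpha_a\parentheses{C_c\kappa - \tfrac12\rho_0}A^\tau + \alpha_a C_a B^\tau .
\]
Everything then reduces to showing that the last two terms are harmless once the speed ratio \eqref{eq:speed_ratio} is imposed.

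The term $A^\tau$ is where the actor and critic compete: it appears with a descent-favorable negative sign in the actor estimate but an unfavorable positive sign in the critic estimate, since fast control motion injects error into the value estimate. Because \eqref{eq:speed_ratio} forces $\kappa\le\rho_0/(2C_c)$, we obtain $C_c\kappa-\tfrac12\rho_0\le0$; as $A^\tau\ge0$, the entire $A^\tau$ term is non-positive and may simply be dropped.

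For the critic error I would bound $B^\tau$ by $\mL_c^\tau$. From the decomposition \eqref{eq:critic_loss2}, $\mL_c^\tau\ge\mL_1^\tau=\frac12\inttx{\rho^\tau(t,x)\abs{\sigma(x)\tp\parentheses{\mG^\tau-\nx V_{u^\tau}}}^2}$; combining $\rho^\tau\ge\rho_0$ from Proposition~\ref{prop:rho} with the uniform ellipticity bound $\abs{\sigma(x)\tp v}^2\ge2\sigma_0\abs{v}^2$ gives $\mL_1^\tau\ge\rho_0\sigma_0 B^\tau$, hence $B^\tau\le\mL_c^\tau/(\rho_0\sigma_0)$. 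Then $\alpha_a C_a B^\tau\le\kappa\alpha_c C_a\mL_c^\tau/(\rho_0\sigma_0)\le\alpha_c c_c\mL_c^\tau$, where the last inequality uses $\kappa\le c_c\rho_0\sigma_0/C_a$ from \eqref{eq:speed_ratio}. Substituting back,
\[
\dfrac{\rd}{\rd\tau}\mL^\tau \le -\alpha_a c_a\parentheses{J[u^\tau]-J[u^*]} - \alpha_c c_c\mL_c^\tau \le -c\,\mL^\tau ,
\]
with $c=\min\{\alpha_a c_a,\alpha_c c_c\}$, using $J[u^\tau]-J[u^*]\ge0$ and $\mL_c^\tau\ge0$. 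Gr\"onwall's inequality then gives \eqref{eq:main_thm}.

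The genuinely heavy work sits inside the two improvement theorems, which I take as given; conditional on them, the combination is careful constant-tracking. The one delicate point is the design of the speed ratio. Because $A^\tau$ couples the two dynamics with opposite signs, a single-time-scale argument closes only if the actor is slowed relative to the critic enough that $C_c\kappa\le\tfrac12\rho_0$, while simultaneously the critic error fed back into the actor is tamed by $\kappa\le c_c\rho_0\sigma_0/C_a$. I expect the main obstacle to be confirming that these two competing smallness requirements are met at once by the choice \eqref{eq:speed_ratio}, that is, that linear convergence holds on a single time scale rather than requiring an asymptotic time-scale separation.
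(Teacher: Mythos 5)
Your proposal is correct and follows essentially the same route as the paper's own proof: the same speed-ratio condition $\alpha_a/\alpha_c \le \rho_0/(2C_c)$ cancels the positive $A^\tau$ term against the actor's negative one, and the same bound $\mL_c^\tau \ge \mL_1^\tau \ge \rho_0\sigma_0 \norm{\nx V_{u^\tau}-\mG^\tau}_{L^2}^2$ together with $\alpha_a/\alpha_c \le c_c\rho_0\sigma_0/C_a$ absorbs the critic-error feedback, leaving $-\alpha_c c_c\mL_c^\tau - \alpha_a c_a(J[u^\tau]-J[u^*])$ and Gr\"onwall. The only difference is notational (you phrase the conditions via $\kappa$), so there is nothing to add.
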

\begin{proof}
Since $\dfrac{\alpha_a}{\alpha_c} \le \dfrac{c_c \rho_0 \sigma_0}{C_a}$, we have
\begin{equation}\label{eq:thm4_temp}
\begin{aligned}
 \alpha_c \, c_c \, \mL_c^\tau & \ge \alpha_c \, c_c \, \mL_1^\tau \\
& = \frac12 \alpha_c c_c \inttx{\rho^{u^\tau}(t,x) \abs{\sigma(x)\tp \parentheses{\mG^\tau(t, x) - \nabla_x V_{u^\tau}(t,x)}}^2} \\
& \ge \alpha_c \, c_c \, \rho_0 \, \sigma_0 \inttx{ \abs{\parentheses{\mG^\tau(t, x) - \nabla_x V_{u^\tau}(t,x)}}^2} \\
& \ge \alpha_a C_a \norm{\nx V_{u^\tau} - \mG^\tau}_{L^2}^2,
\end{aligned}
\end{equation}
where we used the lower bound in Proposition \ref{prop:rho} and uniform ellipticity condition in the second inequality. Combining the results \eqref{eq:critic_improvement} in Theorem \ref{thm:critic_improvement} and \eqref{eq:actor_improvement} in Theorem \ref{thm:actor_improvement}, we obtain
\begin{equation*}
\begin{aligned}
& \quad \dfrac{\rd}{\rd \tau}\mL^\tau = \dfrac{\rd}{\rd \tau} \parentheses{J[u^\tau] + \mL_c^\tau} \\
& \le -2 \alpha_c \, c_c \, \mL_c^\tau + C_c \dfrac{\alpha_a^2}{\alpha_c} \inttx{\rho^\tau(t,x) \abs{\nbu G \parentheses{t,x,u^\tau(t,x), -\mG^\tau(t,x)}}^2} \\
& \quad - \alpha_a \, c_a \parentheses{J[u^\tau] - J[u^*]} + \alpha_a C_a \norm{\nx V_{u^\tau} - \mG^\tau}_{L^2}^2\\
& \quad - \frac12 \alpha_a \rho_0 \inttx{\rho^\tau(t,x) \abs{\nbu G\parentheses{t,x,u^\tau(t,x),-\mG^\tau(t,x)}}^2 \, }\\
& \le -2 \alpha_c \, c_c \, \mL_c^\tau - \alpha_a \, c_a \parentheses{J[u^\tau] - J[u^*]} + \alpha_a C_a \norm{\nx V_{u^\tau} - \mG^\tau}_{L^2}^2 \\
& \le - \alpha_c \, c_c \, \mL_c^\tau - \alpha_a \, c_a \parentheses{J[u^\tau] - J[u^*]} \le - c \mL^\tau
\end{aligned}
\end{equation*}
where the second inequality is because $\dfrac{\alpha_a}{\alpha_c} \le \dfrac{\rho_0}{2 C_c}$, and the third inequality is due to \eqref{eq:thm4_temp}. This establishes \eqref{eq:main_thm} and finishes the proof.
\end{proof}


\section{Numerical examples}\label{sec:example}

In this section, we present two numerical examples to illustrate the effectiveness of our actor-critic algorithm. The first one is the widely studied LQ problem. The second example explores the diffusion form of Aiyagari's model in economic growth. The errors reported below are $L^2$ relative errors, calculated from the average of multiple runs. The default parameters for the PyTorch Adam optimizer are used for model training.

\subsection{The LQ problem}
We consider a LQ type example with $n=n'=m$ first. We lengthen the torus to $[0,2\pi]^n$ for simplicity of implementation in this example. Note that a simple rescaling could pull us back to the unit torus $[0,1]^n$. The objective is
\begin{equation*}
J[u] = \EE\sqbra{\int_0^T \parentheses{\frac12 \abs{u(t,x_t)}^2 + \widetilde{r}(x_t)} \,\rd t + g(x_T)},
\end{equation*}
subject to the dynamic
\begin{equation*}
\rd x_t = u(t,x_t) \,\rd t + \overline{\sigma} \,\rd W_t.
\end{equation*}
Here, $\widetilde{r}(x) = \beta_0 + \frac12 \sum_{i=1}^n \parentheses{\overline{\sigma}^2  \beta_i \sin(x_i) + \beta_i^2 \cos^2(x_i)}$
and $g(x) = \sum_{i=1}^n \beta_i \sin(x_i)$.
The HJB equation for this problem is
$$-\pt V + \sup_{u\in\RR^n}\sqbra{-\frac12 \overline{\sigma}^2 \Delta_x V - \nx V\tp u - \frac12 |u|^2 - \widetilde{r}(x)}=0,$$
with ground truth solution $V^*(t,x) = \beta_0(T-t) + \sum_{i=1}^n \beta_i \sin(x_i)$. In order to enforce the periodic boundary condition in the neural network, we firstly map the input $x$ into a set of trigonometric basis, and then feed them into a two-layer residual network, with ReLU as the activation function. Similar network structure is also used in \cite{han2020solving}.

The numerical results for this LQ problem in $1$ and $10$ dimensions are presented in Figure \ref{fig:LQ}. In the first row of the figure, three plots depict the value function $V(0,\cdot)$, its spatial derivative $\partial_x V(0,\cdot)$, and the control function $u(0,\cdot)$ in $1$ dimension LQ problem. In all three figures, we compare the true function with its neural network approximation, demonstrating the efficacy of our algorithm in capturing the ground truth solutions. The final $L^2$ relative errors for the initial value function $\mV_0$, gradient of value function $\mG$, and the control function $u$ are $2.25\%$, $2.80\%$, and $4.65\%$ respectively.

The first figure on the second row in Figure \ref{fig:LQ} plots the learning curves for this $1d$ LQ problem, using the logarithm of validation errors during the training. The shadow in the figure represents the standard deviation observed during multiple test runs. We observe a linear decay of errors at the beginning of the training, which coincides with the result in Theorem~\ref{thm:joint_rate}. In the later part of the training, the discretization error and sampling error dominate, leading to a deviation from a linear rate of convergence.

We also test the LQ example in $10$ dimensions. The errors for the three functions are $1.28\%$, $4.85\%$, and $4.30\%$ respectively. The second and third figures in the second row of Figure~\ref{fig:LQ} show the density plots of $V(0,\cdot)$ and the first dimension for $u(0,\cdot)$. Specifically, the second figure on row 2 illustrates the probability density function of $V(0,x_0)$ and its neural network approximation, where $x_0$ is uniformly distributed on the torus. The third figure is obtained in a similar way, but the output $u$ in multidimensional, we only plot the first dimension. The neural networks effectively capture the value function and the control function in this multidimensional setting. 

In order to further explain the numerical results, we compare the errors for $10$ dimensional LQ problem with two baselines. The first one is supervise learning. We use the $L^2$ errors of the three functions $V(0,\cdot)$, $\nx V$, and $u$ as the loss functions and train the neural networks with the same network structures and training scheme as our actor-critic algorithm. The supervise learning gives an approximation for the capacity of the networks. The errors for the three functions are $2.45\%$, $3.44\%$, and $4.72\%$, which is similar to the training errors reported above. This result suggests that our algorithm nearly reaches its capacity determined by the neural network architecture. The second comparison involves a vanilla gradient descent method (i.e., the discretize-optimize framework), where the cost \eqref{eq:cost} is discretized and optimized directly without any policy evaluation. We also use the same network structure and same training scheme for this comparison. The final error for the control is $5.39\%$, which is larger than our actor-critic method ($4.30\%$). Moreover, the final cost for our actor-critic method and a vanilla gradient descent method are $\num{1.90e-2}$ and $\num{2.11e-2}$, where our actor-critic method has about $10\%$ relative improvement. This result validates our argument in Section \ref{sec:policy_gradient} that our optimize-discrete is better than the vanilla discrete-optimize method.


\begin{figure}[ht]
\centering
\includegraphics[width=0.325\textwidth]{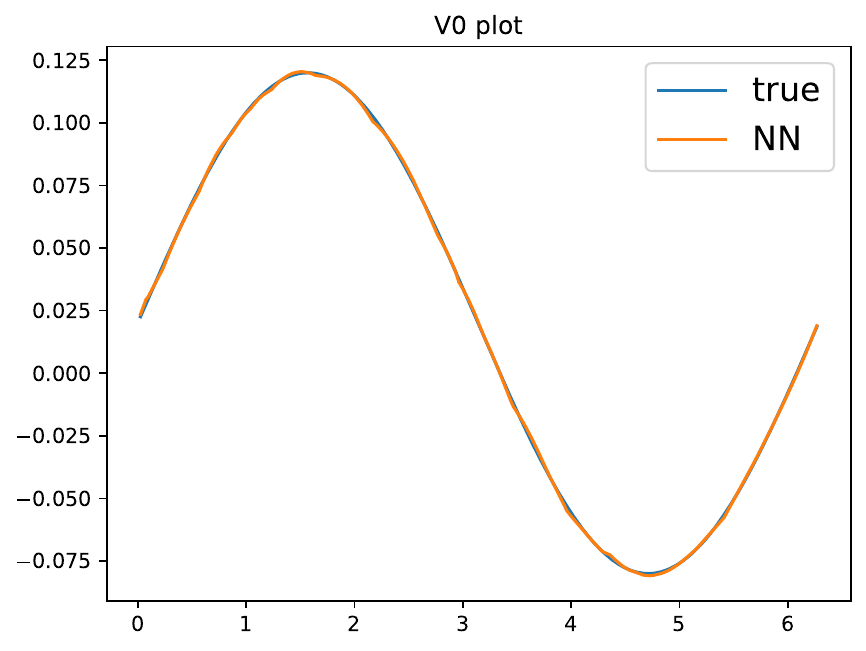}
\includegraphics[width=0.325\textwidth]{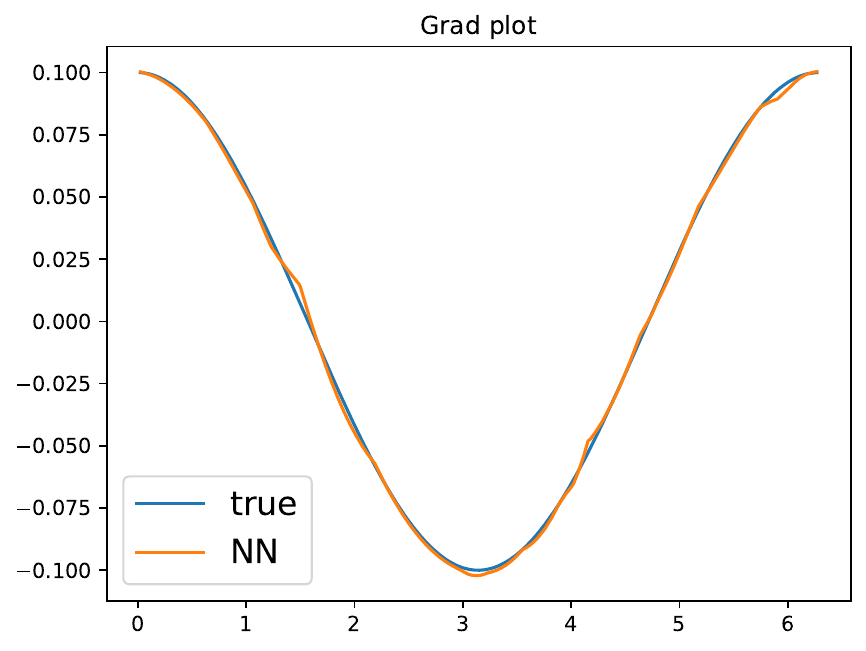}
\includegraphics[width=0.325\textwidth]{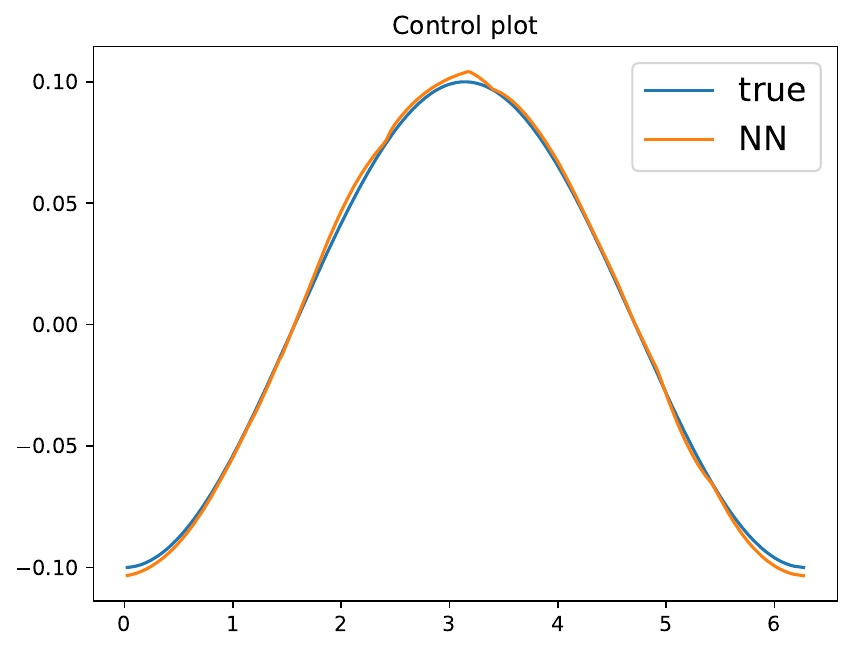}
\includegraphics[width=0.325\textwidth]{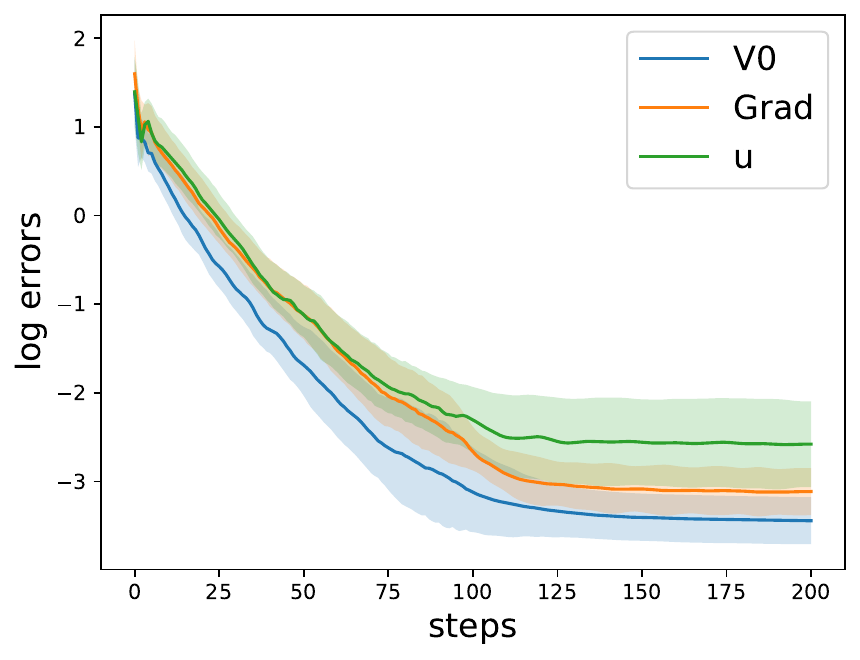}
\includegraphics[width=0.325\textwidth]{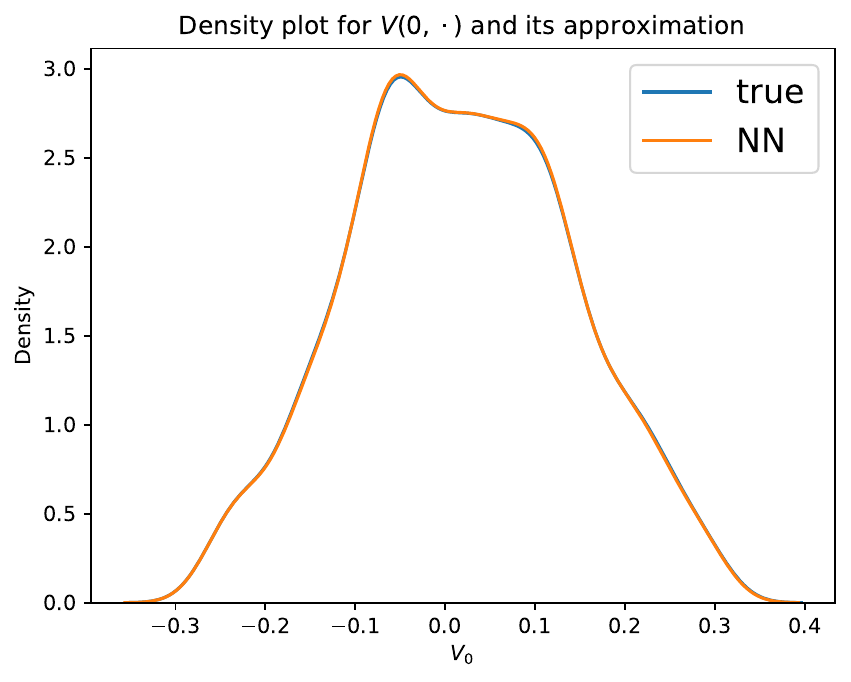}
\includegraphics[width=0.325\textwidth]{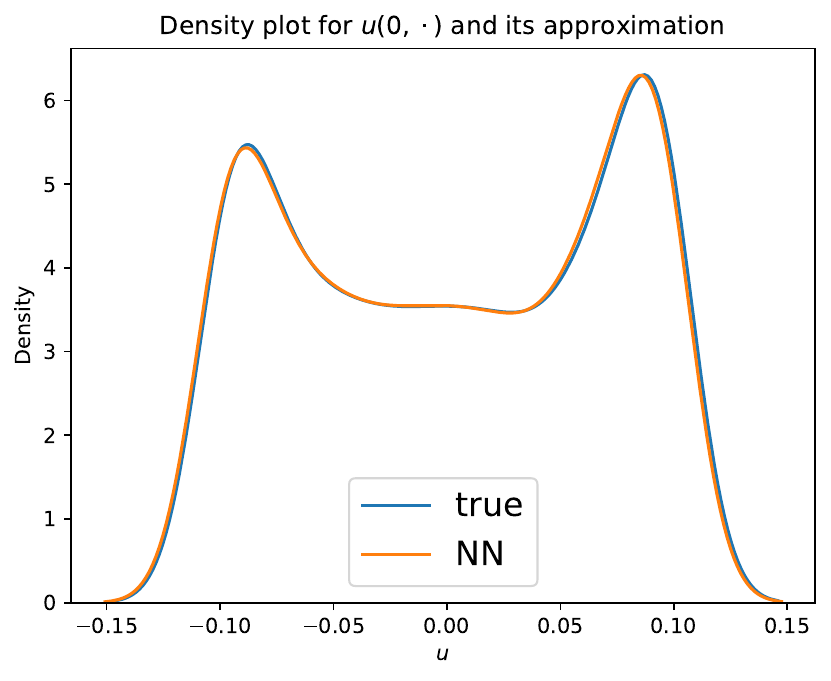}
\caption{Numerical results for the LQ problem. First line: plot for the value function, its spatial gradient and the control function at $t=0$ for $1d$ LQ problem. Each figure compare the true function with its neural network approximation. Second line: training curve for $1d$ LQ problem and the density plots of the value function and the control function for $10d$ LQ problem.}
\label{fig:LQ}
\end{figure}

\subsection{Aiyagari's growth model in economics}

In this section, we consider a modified version of the Aiyagari's economic model \citep{carmona2018probabilistic}. It describes the individual's choice in economy. The model is originally a mean field game problem, and we make some modification to suit our optimal control framework. Everything in this example is in $1$ dimension. The objective of an agent is to maximize the total utility
\begin{equation*}
\max_{c_t} \EE \sqbra{\int_0^T \parentheses{U(c_t) - r(Z_t)} \rd t + A_T - g(Z_T)}
\end{equation*}
subject to the state dynamics
\begin{align*}
\rd Z_t &= -(Z_t - 1) \,\rd t + \sigma_z \,\rd W_t \\
\rd A_t &= \parentheses{(1-\alpha)Z_t + (\alpha - \delta) A_t - c_t} \rd t + \sigma_a A_t \,\rd W_t.
\end{align*}
$Z_t$ denotes the labor productivity, which follows a Ornstein–Uhlenbeck process with mean $1$ and diffusion coefficient $\sigma_z$. $A_t$ represents the total asset of the agent, whose dynamic depends on the income $(1-\alpha)Z_t$, the total asset $A_t$, the consumption $c_t$ that the agent can control, and some random perturbation with volatility $\sigma_a$. Here, $\alpha$ is the tax rate and $\delta$ is the capital depreciation rate. $U(c)$ is the consumption utility function that is increasing and concave. In this example, we choose $U(c) = \log(c)$. $r(z)$ and $g(z)$ are increasing functions that describe the tiredness of the agent during and at the end of the time period.

With a trivial sign shift, we transform the objective into a minimization problem to suit our setting
\begin{equation*}
\min_{c_t} \EE \sqbra{\int_0^T \parentheses{-U(c_t) + r(Z_t)} \rd t - A_T + g(Z_T)}.
\end{equation*}
The corresponding HJB equation is
\begin{multline*}
-V_t + \sup_{c \in \RR} \biggl[-\frac12 \sigma_z^2 V_{zz} - \frac12 \sigma_a^2 a^2 V_{aa} + (z-1)V_z \\ + (c-(1-\alpha)z - (\alpha-\delta)a)V_a + U(c) - r(z)\biggr] = 0
\end{multline*}
with terminal condition $V(T,z,a)=g(z)-a$. In the numerical implementation, we set $\alpha=\delta=0.05$, $\sigma_z=1.0$, $\sigma_a=0.1$ and $g(z)=0.2\,e^{0.2z}$.

We consider Euclidean space instead of a torus in this instance for practical reasons. Consequently, we will not use trigonometric basis in this example. This choice demonstrates the versatility of our algorithm in handling more general settings, whereas our theoretical analysis was conducted on a torus for technical simplicity.


The numerical results are shown in Figure \ref{fig:Aiyagari}. The first figure shows the plot for $V(0,\cdot)$ and its neural network approximation. The second figure shows the plot of the control function when we fix $t=0$, $z=1$ and let $a$ varies. We observe that the neural network approximations capture the shapes of the value function and the control function well. The final errors for $\mV_0$,  $\mG$, and $u$ are $0.54\%$, $1.41\%$, and $1.56\%$ respectively.

\begin{figure}[ht]
\centering
\includegraphics[width=0.35\textwidth]{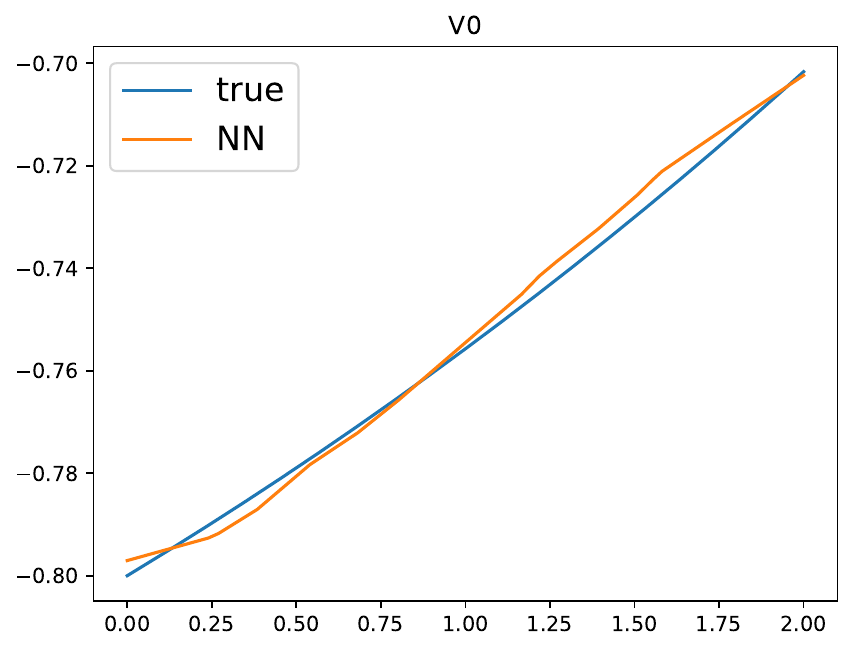}
\includegraphics[width=0.335\textwidth]{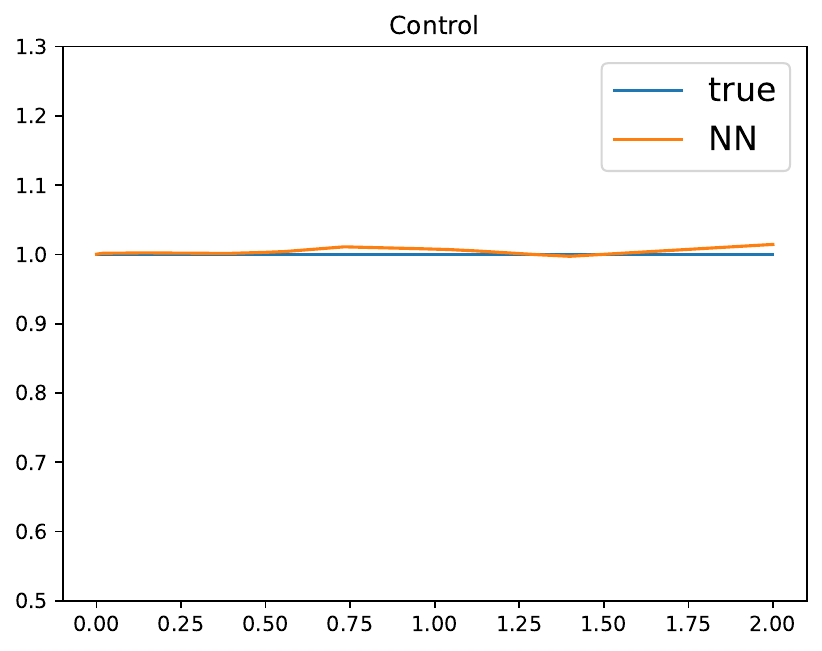}
\caption{Numerical results for the Aiyagari's example. The two figures show the plot of the value function and control function. Each figure compares the true function with its neural network approximation.}
\label{fig:Aiyagari}
\end{figure}

We remark finally that we did not pursue  extensive parameter tuning in our numerical test, and we believe that a more careful design of networks and hyperparameter tuning would further reduce errors. We use the same network structures and same training scheme to make a fair comparison.
We summarize all hyperparameters used in our examples in Table \ref{tab:para}.
\begin{table}[ht]
\centering
\begin{tabular}{c|ccc}
\midrule
&  LQ $1d$&  LQ $10d$& Aiyagari\\
\midrule
learning rate (actor, critic)& $(0.05,0.1)$ & $(0.01,0.05)$ & $(0.01,0.02)$\\
number of iterations& $200$ & $1500$ & $500$ \\
batch size& $500$ & $2000$ & $1000$ \\
time step size $(\Delta t, \Delta \tau)$ & $(0.01,0.5)$ & $(0.01,0.5)$ & $(0.01,0.5)$\\
\midrule
\end{tabular}
\caption{Parameters for the numerical examples.}
\label{tab:para}
\end{table}

\section{Conclusion and future directions}\label{sec:future}

In this paper, we propose an actor-critic framework to solve the optimal control problem. We apply a LSTD method for policy evaluation in the critic and a policy gradient method for the actor. We prove a linear rate of convergence for the actor-critic flow and validate our algorithm with numerical examples.

There are several interesting topics arising from gradient flow and numerical scheme worth further studying. For example, the consistency between the discretized gradient descent and the continuous gradient flow is crucial for ensuring the effectiveness of the algorithm. Additionally, the theoretical analysis for the approximation error of neural networks is an active research area. While our work focuses on convergence analysis for the gradient flow at the continuous time level, numerous other significant topics remain open for future research.

It's noteworthy that Theorem \ref{thm:critic_improvement} still holds with controlled diffusion $\sigma=\sigma(x_t,u_t)$. However, implementing our policy gradient flow becomes challenging under such conditions because $\nbu G$ involves $\nx^2 V_{u^\tau}$ while we do not have estimates for this Hessian. Although using the Jacobian of $\mG^\tau$ as an estimate for $\nx^2 V_{u^\tau}$ is numerically feasible, it lacks theoretical guarantees. Consequently, a promising avenue for future research is the development of a PDE solver with theoretical guarantees for up to second-order derivatives. If the Hessian also has a linear convergence rate as in Remark \ref{rmk:critic_rate}, then our analysis for the actor-critic flow is applicable to the setting with controlled diffusion. 

\textbf{Acknowledgements} The work of J. Lu is partially supported by National Science Foundation under award NSF DMS-2309378.
M. Zhou is partially supported by AFOSR YIP award No. FA9550-23-1-0087, AFOSR MURI FA9550-21-1-0084, AFOSR MURI FA9550-18-502, and N00014-20-1-2787.

\section*{Declarations}

\textbf{Conflict of interest} The authors declare no conflict of interest.

\noindent \textbf{Code and data availability} Our code is available at \url{https://github.com/MoZhou1995/ActorCriticControl}. All data used in this study were randomly generated and can be reproduced directly using the provided code.

\begin{appendices}

\vspace{0.15in}

\hspace{-0.3in} {\Large \textbf{Appendix}}

\vspace{0.15in}

We collect proofs for the propositions, lemmas and theorems in the appendix. We prove Proposition \ref{prop:cost_derivative} and \ref{prop:rho} in Section \ref{sec:props}. Then, we state and prove some auxiliary lemmas in Section \ref{sec:lemmas}. Next, we prove Theorem \ref{thm:critic_improvement} and \ref{thm:actor_improvement} in Section \ref{sec:thms}.

Before proceeding, we give a few comments on Assumptions \ref{assump:basic} - \ref{assump:actor_rate} first.

Recall that $D(x) = \frac12 \sigma(x) \sigma(x)\tp$, so $D$ is also smooth by our assumption on $\sigma$, and we still use $K$ to denote its $C^{4}(\mX)$ bound. Since the control function $u(t,x) \in \mU$ is bounded, we just need $r(x,u)$ and $b(x,u)$ has bounded derivative when their input $u \in \RR^{n'}$ lies in such bounded domain. A similar assumption can also be found in \cite{vsivska2020gradient}. Also, when the regularities in Assumption \ref{assump:basic} hold and $u \in \mU$, the solution $V_u$ to the HJ equation \eqref{eq:HJ} lies in $C^{1,2}([0,T]; \mX)$ and has classical derivatives. 
Similarly, the solution $V^*$ to the HJB equation lies in $C^{1,2}([0,T]; \mX)$ thanks to \cite{mou2019remarks} Theorem 5.3. This smoothness further implies that $V^*$ is a classical solution to the HJB equation. 
These classical solutions possess higher regularity thanks to Schauder's estimate \citep{ladyvzenskaja1988linear}, which informs us that $V_u \in C^{1,4}([0,T]; \mX)$ when Assumption \ref{assump:basic} holds and $u \in \mU$. Then, we observe that $G\parentheses{t, x, u(t,x), -\nx V_u(t,x), -\nx^2 V_u(t,x)}$ in \eqref{eq:HJ} is differentiable in $t$, which implies that $V_u \in  C^{2,4}([0,T]; \mX)$. We will also use $K$ to denote the bound for $C^{2,4}([0,T]; \mX)$ norm of $V_u$. The same argument also holds for $V^*$, so $\norm{V^*}_{C^{2,4}} \le K$.

We make boundedness assumptions for  $r$, $b$, $\sigma$, and $g$ because $\mX$ is compact. In the Euclidean space $\RR^n$, we usually assume that these functions has linear growth in $\abs{x}$. The boundedness assumption for the derivatives also gives us Lipschitz conditions thanks to the mean-value theorem. For example, boundedness of $\abs{\nx b(x,u)}$ implies Lipschitz condition of $b$ in $x$. To make the proofs more reader friendly, we will use $L$ instead of $K$ whenever we use the Lipschitz condition. Throughout the proof, we use $C$ to denote a constant that only depend on $n$, $T$, and $K$, which may change from line to line.

\section{Proofs for the Propositions}\label{sec:props}
\begin{proof}[Proof of Proposition \ref{prop:cost_derivative}]
We fix an arbitrary smooth perturbation function $\phi(t,x)$ (note that smooth functions are dense in $L^2$ space), then
\begin{equation*}
    \dfrac{\rd}{\rd \ve} J[u + \ve \phi] \bigg|_{\ve=0} = \inner{\fd{J}{u}}{\phi}_{L^2}.
\end{equation*}
We denote $x^{\ve}_t$ the SDE \eqref{eq:SDE_X} under control function $u^{\ve} := u + \ve\phi$ that start with $x^{\ve}_0 \sim \text{Unif}(\mX)$. Let $\rho^{\ve}(t,x)$ be its density. We also denote the corresponding value function by $V^{\ve}(t,x) := V_{u^{\ve}}(t,x)$ for simplicity. Then, $\rho^{\ve}$ and $V^{\ve}$ depend continuously on $\ve$ (see \cite{yong1999stochastic} Section 4.4.1). Through further observation on the FP equation (3) the HJ equation (6), we find that $\rho^{\ve}(t,x)$ and $V^{\ve}(t,x)$ is differentiable in $\ve$ for any $(t,x)$. Then, the differentiability of $J[u+\ve\phi]$ in $\ve$ follows from the smooth dependence of $J$ on $u$. By definition of the cost functional \eqref{eq:cost},
\begin{equation*}
\begin{aligned}
    J[u + \ve\phi] & = \EE\sqbra{\int_0^T r(x^{\ve}_t, u^{\ve}(t,x^{\ve}_t)) \rd t + g(x^{\ve}_T)} \\
    & = \int_0^T \int_{\mX} r(x, u^{\ve}(t, x)) \rho^{\ve}(t ,x) \,\rd x \, \rd t + \int_{\mX} V^{\ve}(T, x) \rho^{\ve}(T ,x) \,\rd x.
\end{aligned}
\end{equation*}
Taking derivative w.r.t. $\ve$, and note that $V^{\ve}(T,x)=g(x)$ does not depend on $\ve$, we obtain
\begin{equation}\label{eq:dJdep}
\begin{aligned}
    &\dfrac{\rd}{\rd \ve} J[u + \ve \phi] = \int_{\mX} V^{\ve}(T, x) \pve \rho^{\ve}(T ,x) \,\rd x + \\
    & + \int_0^T \int_{\mX} \sqbra{\nbu r\parentheses{x, u^{\ve}(t, x)}\tp \phi(t,x) \rho^{\ve}(t ,x) + r\parentheses{x, u^{\ve}(t, x)} \pve \rho^{\ve}(t ,x)} \,\rd x \, \rd t.
\end{aligned}
\end{equation}
In order to compute $V^{\ve}(T, x) \pve \rho^{\ve}(T ,x)$ in \eqref{eq:dJdep}, we write down the integral equation
\begin{equation}\label{eq:VT_drho}
    V^{\ve}(T,x) \rho^{\ve}(T,x) = V^{\ve}(0,x) \rho^{\ve}(0,x) + \int_0^T \sqbra{\partial_t \rho^{\ve}(t,x) V^{\ve}(t,x) + \rho^{\ve}(t,x) \partial_t V^{\ve}(t,x)} \rd t.
\end{equation}
We also have
\begin{equation}\label{eq:prop_dJdu_temp}
\begin{aligned}
& \quad \pve V^{\ve}(0,x) \rho^{\ve}(0,x) + \int_0^T \sqbra{ \partial_t \rho^{\ve}(t,x) \pve V^{\ve}(t,x) + \rho^{\ve}(t,x) \pve \partial_t V^{\ve}(t,x)} \rd t \\
& = \pve V^{\ve}(T,x) \rho^{\ve}(T,x) = \pve g(x) \rho^{\ve}(T,x) = 0.
\end{aligned}
\end{equation}
Next, taking derivative of \eqref{eq:VT_drho} w.r.t. $\ve$ (note that $V^{\ve}(T,x)=g(x)$ and $\rho^{\ve}(0,\cdot) \equiv 1$ do not depend on $\ve$), we obtain
\begin{equation}\label{eq:Vdrhodep}
\begin{aligned}
& \quad V^{\ve}(T, x) \, \pve \rho^{\ve}(T ,x) \\
& = \pve V^{\ve}(0,x) \rho^{\ve}(0,x) + \int_0^T \partial_{\ve} \sqbra{ \partial_t \rho^{\ve}(t,x) V^{\ve}(t,x) + \rho^{\ve}(t,x) \partial_t V^{\ve}(t,x) } \rd t \\
& = \int_0^T \sqbra{\pve \partial_t \rho^{\ve}(t,x) V^{\ve}(t,x) + \pve \rho^{\ve}(t,x) \partial_t V^{\ve}(t,x) } \rd t \\
& = \int_0^T \sqbra{\pve \partial_t \rho^{\ve}(t,x) V^{\ve}(t,x) + \pve \rho^{\ve}(t,x) G\parentheses{t, x, u(t, x), -\nx V^{\ve}(t,x), -\nx^2 V^{\ve}(t,x)} } \rd t
\end{aligned}
\end{equation}
where we used \eqref{eq:prop_dJdu_temp} and the HJ equation \eqref{eq:HJ} in the second and third equality respectively.
Substitute \eqref{eq:Vdrhodep} into \eqref{eq:dJdep}, we obtain
\begin{equation}\label{eq:dJdve1}
\begin{aligned}
& \quad \dfrac{\rd}{\rd \ve} J[u + \ve \phi] \\
& = \int_0^T \int_{\mX} \left[\pve \rho^{\ve}(t,x) G\parentheses{t, x, u(t, x), -\nx V^{\ve}(t,x), -\nx^2 V^{\ve}(t,x)}\right.\\
& \hspace{0.5in} \left. + \pve \partial_t \rho^{\ve}(t,x) V^{\ve}(t,x)\right] \,\rd x \, \rd t\\
& \quad + \int_0^T \int_{\mX} \sqbra{\nbu r\parentheses{x, u^{\ve}(t, x)}\tp \phi(t,x) \rho^{\ve}(t ,x) + r\parentheses{x, u^{\ve}(t, x)} \pve \rho^{\ve}(t ,x)} \,\rd x \, \rd t
\end{aligned}
\end{equation}
Taking derivative of the Fokker Planck equation \eqref{eq:FokkerPlanck} w.r.t. $\ve$, we obtain
\begin{equation}\label{eq:dFKdve}
\begin{aligned}
    & \quad \pve \partial_t \rho^{\ve}(t,x)\\
    & = -\nx \cdot \sqbra{\nbu b\parentheses{x, u^{\ve}(t, x)} \phi(t,x) \rho^{\ve}(t,x) + b\parentheses{x, u^{\ve}(t, x)} \pve \rho^{\ve}(t,x) } \\
    & \quad + \sum_{i,j=1}^n \partial_i \partial_j \sqbra{D_{ij} \parentheses{x} \pve \rho^{\ve}(t,x)}
\end{aligned}
\end{equation}
Substituting \eqref{eq:dFKdve} into \eqref{eq:dJdve1}, we get
\begin{equation*}
\begin{aligned}
& \quad \dfrac{\rd}{\rd \ve} J[u + \ve \phi] = \int_0^T \int_{\mX} \left\{ \pve\rho^{\ve}(t,x) G\parentheses{t, x, u(t, x), -\nx V^{\ve}(t,x), -\nx^2 V^{\ve}(t,x)} \right.\\
& + V^{\ve}(t,x) \{ -\nx \cdot \sqbra{\nbu b\parentheses{x, u^{\ve}(t, x)} \phi(t,x) \rho^{\ve}(t,x) + b\parentheses{x, u^{\ve}(t, x)} \pve \rho^{\ve}(t,x) } \\
& \hspace{0.6in} + \sum_{i,j=1}^n \partial_i \partial_j \sqbra{D_{ij} \parentheses{x} \pve \rho^{\ve}(t,x)} \}\\
& + \left. \sqbra{\nbu r\parentheses{x, u^{\ve}(t, x)}\tp \phi(t,x) \rho^{\ve}(t ,x) + r\parentheses{x, u^{\ve}(t, x)} \pve \rho^{\ve}(t ,x)} \right\}\rd x \, \rd t.
\end{aligned}
\end{equation*}
Applying integration by part in $x$, we get 
\begin{equation*}
\begin{aligned}
& \quad \dfrac{\rd}{\rd \ve} J[u + \ve \phi] = \int_0^T \int_{\mX} \left\{ \pve\rho^{\ve}(t,x) G\parentheses{t, x, u(t, x), -\nx V^{\ve}(t,x), -\nx^2 V^{\ve}(t,x)} \right.\\
& \hspace{0.4in} +\nx V^{\ve}(t,x)\tp \sqbra{\nbu b\parentheses{x, u^{\ve}(t, x)} \phi(t,x) \rho^{\ve}(t,x) + b\parentheses{x, u^{\ve}(t, x)} \pve \rho^{\ve}(t,x) } \\
& \hspace{0.4in} + \sum_{i,j=1}^n \partial_i \partial_j V^{\ve}(t,x) \sqbra{D_{ij} \parentheses{x} \pve \rho^{\ve}(t,x)}\\
& \hspace{0.4in} + \left. \sqbra{\nbu r\parentheses{x, u^{\ve}(t, x)}\tp \phi(t,x) \rho^{\ve}(t ,x) + r\parentheses{x, u^{\ve}(t, x)}\tp \pve \rho^{\ve}(t ,x)} \right\}\rd x \, \rd t.
\end{aligned}
\end{equation*}
Making a rearrangement, we get 
\begin{equation*}
\begin{aligned}
& \quad \dfrac{\rd}{\rd \ve} J[u + \ve \phi]  = \int_0^T \int_{\mX} \bigg\{ \pve\rho^{\ve}(t,x) \Big[ G\parentheses{t, x, u(t, x), -\nx V^{\ve}(t,x), -\nx^2 V^{\ve}(t,x)} \\
& \hspace{0.4in} + \nx V^{\ve}(t,x)\tp b\parentheses{x, u^{\ve}(t, x)} + \sum_{i,j=1}^n \partial_i \partial_j V^{\ve}(t,x) D_{ij} \parentheses{x} + r\parentheses{x, u^{\ve}(t, x)} \Big] \\
& \hspace{0.4in} + \rho^{\ve}(t,x) \sqbra{ \nx V^{\ve}(t,x)\tp \nbu b\parentheses{x, u^{\ve}(t, x)} + \nbu r\parentheses{x, u^{\ve}(t, x)}\tp }\tp  \phi(t,x) \bigg\} \rd x \, \rd t.
\end{aligned}
\end{equation*}
Therefore, by the definition of $G$ in \eqref{eq:generalized_Hamiltonian},
\begin{equation*}
\begin{aligned}
& \quad \dfrac{\rd}{\rd \ve} J[u + \ve \phi] \\
& = \int_0^T \int_{\mX} \sqbra{\pve\rho^{\ve}(t,x) \cdot [0] - \rho^{\ve}(t,x) \nbu G\parentheses{t, x, u^{\ve}(t, x), -\nx V^{\ve}(t, x)}\tp \phi(t,x)} \rd x \, \rd t\\
& = -\int_0^T \int_{\mX} \rho^{\ve}(t,x) \nbu G\parentheses{t, x, u^{\ve}(t, x), -\nx V^{\ve}(t, x)}\tp \phi(t,x) \, \rd x \, \rd t.
\end{aligned}
\end{equation*}
Let $\ve = 0$, we get
\begin{equation}\label{eq:dJdve}
\dfrac{\rd}{\rd \ve} J[u + \ve \phi] \bigg|_{\ve=0} = - \int_0^T \int_{\mX} \rho(t,x) \nbu G\parentheses{t, x, u(t, x), -\nx V_u(t, x)} \phi(t,x) \, \rd x \, \rd t.
\end{equation}
Therefore,
\begin{equation*}
\fd{J}{u}(t, x)  = - \rho(t,x) ~\nbu G\parentheses{t, x, u(t, x), -\nx V(t,x) }.
\end{equation*}
i.e. \eqref{eq:actor_derivative} holds.

We remark that the proposition still holds with controlled diffusion, we just need to track $\nbu D$. See \cite{zhou2023policy} for details.

The proof for \eqref{eq:dVdu} is almost the same. Firstly, changing the control function at $t < s$ will not affect $V^u(s,\cdot)$ by definition, so we just need to show \eqref{eq:dVdu} when $t \ge s$. We recall the definition of value function \eqref{eq:value}
\begin{equation*}
\begin{aligned}
V_u(s,y) &= \EE\sqbra{\int_{s}^T r(x_t, u_t) \,\rd t + g(x_T) \;\Big|\; x_s=y} \\
& = \int_s^T \int_{\mX} r(x,u(t,x)) p^u(t,x;s,y) \, \rd x \, \rd t + \int_{\mX} g(x) p^u(T,x;s,y) \,\rd x.
\end{aligned}
\end{equation*}
Here, $p^u(t,x;s,y)$ is the fundamental solution of the Fokker Planck equation \eqref{eq:FokkerPlanck}, so $p^u(t,x;s,y)$, as a function of $(t,x)$, is the density of $x_t$ starting at $x_s=y$. Therefore, we only need to repeat the argument to prove \eqref{eq:actor_derivative}. The only caveat we need to be careful is that $p^{\ve}(s,\cdot;s,y) = \delta_y$, so the classical derivative does not exist. This is not an essential difficulty because we can pick an arbitrary smooth probability density function $\psi(y)$ on $\mX$ and compute
\begin{equation}\label{eq:general_derivative}
\frac{\rd}{\rd \ve} \int_{\mX} V^{\ve}(s,y) \psi(y) \, \rd y \Big|_{\ve=0}.
\end{equation}
For example, when $s=0$ and $\psi \equiv 1$, \eqref{eq:general_derivative} becomes 
\begin{equation*}
\frac{\rd}{\rd \ve} \int_{\mX} V^{\ve}(0,y) \, \rd y \Big|_{\ve=0} = \frac{\rd}{\rd \ve} J[u^{\ve}] \Big|_{\ve=0}.
\end{equation*}
Therefore, we can repeat the argument to prove \eqref{eq:dJdve} and get
\begin{equation}\label{eq:general_derivative2}
\begin{aligned}
& \quad \frac{\rd}{\rd \ve} \int_{\mX} V^{\ve}(s,y) \psi(y) \, \rd y \Big|_{\ve=0} \\
& = - \int_s^T \int_{\mX} \rho^{u,s,\psi}(t,x) \nbu G\parentheses{t, x, u(t, x), -\nx V_u(t,x)} \phi(t,x) \, \rd x \, \rd t.
\end{aligned}
\end{equation}
where $\rho^{u,s,\psi}(t,x) := \int_{\mX} p^u(t,x;s,y) \psi(y) \, \rd y$ is the solution to the Fokker Planck equation with initial condition $\rho^{u,s,\psi}(s,x) = \psi(x)$. It is also the density function of $x_t$, which starts with $x_s \sim \psi$. The only difference between proving \eqref{eq:dJdve} and \eqref{eq:general_derivative2} is that we need to replace $\int_0^T$ by $\int_s^T$, replace $\rho^{\ve}(t,x)$ by $\rho^{\ve,s,\psi}(t,x) := \rho^{u^{\ve},s,\psi}(t,x)$, and replace $\rho^{\ve}(0,x)$ by $\rho^{\ve,s,\psi}(s,x)$. Therefore,
\begin{equation*}
\dfrac{\delta \parentheses{ \int_{\mX} V_u(s,y) \psi(y) \, \rd y}}{\delta u}(t,x) = - \int_{\mX} p^u(t,x;s,y) \psi(y) \, \rd y \, \nbu G\parentheses{t, x, u(t, x), -\nx V_u(t,x)}.
\end{equation*}
Hence, \eqref{eq:dVdu} holds.
\end{proof}

\begin{proof}[Proof for Proposition \ref{prop:rho}]
The Fokker Planck equation has been well-studied. Let $p^u(t,x;s,y)$ denote the fundamental solution to \eqref{eq:FokkerPlanck}. \cite{aronson1967bounds} found that the fundamental solution of a linear parabolic equation can be upper and lower bounded by fundamental solutions of heat equation (i.e. Gaussian functions) with different thermal diffusivity constant. For example, let $\widetilde{p}^u(t,x;s,y)$ be the fundamental solution of the Fokker Planck equation \eqref{eq:FokkerPlanck} in $\RR^n$ (where $b$ and $\sigma$ are extended periodically), then
\begin{equation}\label{eq:prop_rho_temp}
C^{-1} (t-s)^{-n/2} \exp\Bigl(-\dfrac{C \abs{x-y}^2}{t-s}\Bigr) \le \widetilde{p}^u(t,x;s,y) \le C (t-s)^{-n/2} \exp\Bigl(-\dfrac{C^{-1} \abs{x-y}^2}{t-s}\Bigr)
\end{equation}
for all $s<t\le T$ and $x,y \in \RR^n$, where $C$ only depends on $n$, $T$, and $K$. We are in the unit torus $\mX$ instead of $\RR^n$, so 
$$p^u(t,x;s,y) = \sum_{z \in \ZZ^n} \widetilde{p}^u(t,x + z;s,y),$$
where the $x,y$ on the left is in $\mX$, and the $x,y$ on the right can be viewed as their embedding into $\RR^n$.
Our solution to the Fokker Planck equation, starting at uniform distribution $\rho(0,x) \equiv 1$, can be represented by
\begin{align*}
& \quad \rho^u(t,x) = \int_{\mX} p^u(t,x;0,y) \,\rd y = \int_{[0,1]^n} \sum_{z \in \ZZ^n} \widetilde{p}^u(t,x+z;0,y) \,\rd y \\
& = \int_{[0,1]^n} \sum_{z \in \ZZ^n} \widetilde{p}^u(t,x;0,y-z) \,\rd y = \int_{\RR^n} \widetilde{p}^u(t,x;0,y) \,\rd y 
\end{align*}

Substituting the lower and upper bound \eqref{eq:prop_rho_temp}, we obtain
$$\rho^u(t,x) \ge \int_{\RR^n}  C^{-1} t^{-n/2} \exp\parentheses{-\dfrac{C \abs{x-y}^2}{t}} \,\rd y =: \rho_0$$
and
$$\rho^u(t,x) \le \int_{\RR^n}  C t^{-n/2} \exp\parentheses{-\dfrac{C^{-1} \abs{x-y}^2}{t}} \,\rd y =: \rho_1.$$
Here, the two integrals above are invariant w.r.t. $t$ because of a simple change of variable. Therefore, we obtain a uniform lower bound $\rho_0$ and upper bound $\rho_1$ for $\rho^u(t,x)$, which depend only on $T$, $n$, and $K$.

Next, we show an upper bound for $\abs{\nx \rho^u(t,x)}$. It is sufficient to show
$$\abs{\rho^u(t,x_1) - \rho^u(t,x_2)} \le \rho_2 \abs{x_1-x_2}$$ 
for any $t \in [0,T]$ and $x_1,x_2 \in \mX$. We will use Feynman Kac representation of the backward parabolic equation for $\rho_b^u(t,x) := \rho^u(T-t,x)$ show this bound. Here the subscript ``$b$'' is short for backward. $\rho_b^u(t,x)$ satisfies $\rho_b^u(T,x) = 1$ and
\begin{equation}
\begin{aligned}
-\pt \rho_b^u(t,x) &=  -\nx \cdot \sqbra{b(x, u(T-t,x)) \rho_b^u(t,x)} + \sum_{i,j=1}^n \partial_i \partial_j \sqbra{D_{ij}(x) \rho_b^u(t,x)}  \\
& = \Tr\sqbra{D(x) \nx^2 \rho_b^u(t,x)} - \inner{b(x, u(T-t,x))}{\nx \rho_b^u(t,x)}\\
& \quad + \sum_{i,j=1}^n \parentheses{\partial_i D_{ij}(x) \partial_j \rho_b^u(t,x) + \partial_j D_{ij}(x) \partial_i \rho_b^u(t,x)} \\
& \quad + \sqbra{\sum_{i,j=1}^n \partial_i \partial_j D_{i,j}(x) - \nx \parentheses{ b(x, u(T-t,x))}} \rho_b^u(t,x)\\
& =: \Tr\sqbra{D(x) \nx^2 \rho_b^u(t,x)} + \inner{b_b(t,x)}{\nx \rho_b^u(t,x)} + V_b(t,x) \rho_b^u(t,x).
\end{aligned}
\end{equation}
Therefore, if we define an SDE on $\mX$
\begin{equation}\label{eq:backSDE}
\rd x^b_t = b_b(t,x^b_t)\, \rd t + \sigma(x^b_t) \,\rd W_t,
\end{equation}
then we have the Feynman Kac representation
$$\rho^u(T-t,x) = \rho_b^u(t,x) = \EE \sqbra{\exp\parentheses{-\int_t^T V_b(s,x^b_s) \,\rd s} ~\bigg|~ x^b_t=x}.$$
Here,
$$ V_b(s,x) = \sum_{i,j=1}^n \partial_i \partial_j D_{i,j}(x) - \nx \parentheses{ b(x, u(T-s,x))}$$
is bounded and Lipschitz in $x$ because Assumption \ref{assump:basic} holds and $u \in \mU$. We will use $K_b$ and $L_b$ to denote its bound and Lipschitz constant. Let $x^{i,t,b}_s$ be the SDE dynamics \eqref{eq:backSDE} with initializations $x^{i,t,b}_t = x_i ~~ (i=1,2)$, then
\begin{align*}
& \quad \abs{\rho^u(T-t,x_1) - \rho^u(T-t,x_2)} \\
& = \abs{ \EE\sqbra{ \exp\parentheses{-\int_t^T V_b(s,x^{1,t,b}_s) \,\rd s} - \exp\parentheses{-\int_t^T V_b(s,x^{2,t,b}_s) \,\rd s}}} \\
& \le \EE \abs{\exp\parentheses{-\int_t^T V_b(s,x^{1,t,b}_s) \,\rd s} \parentheses{1-\exp\parentheses{\int_t^T V_b(s,x^{1,t,b}_s) - V_b(s,x^{2,t,b}_s) \,\rd s}}} \\
& \le \exp(K_b T) ~ \EE \abs{1 - \exp\parentheses{\int_t^T L_b \, \abs{x^{1,t,b}_s - x^{2,t,b}_s} \rd s}} \\
& \le \exp(K_b T) ~ \EE \sqbra{\exp\parentheses{\sqrt{n} \, T \, L_b} / (\sqrt{n} \, T) \int_t^T \, \abs{x^{1,t,b}_s - x^{2,t,b}_s} \rd s } \le \rho_2 \abs{x_1-x_2}
\end{align*}
In the second inequality, we used the Lipschitz condition of $V_b$. In the third inequality, we used the inequality $\exp(Lx) - 1 \le \exp(LK)x / K$ if $x \in [0,K]$ (recall that $x^{i,t,b}_s \in \mX$, which implies $\abs{x^{1,t,b}_s - x^{2,t,b}_s} \le \sqrt{n}$). The last inequality comes from Lemma \ref{lem:gronwall} (in the next section), which is a direct corollary of \eqref{eq:Gronwall2}. Therefore, $\abs{\nx \rho^u(t,x)}$ is bounded by some constant $\rho_2$.
\end{proof}

\section{Some auxiliary lemmas}\label{sec:lemmas}
\begin{lem}\label{lem:gronwall}[Stochastic Gronwall inequality] 
Under Assumption \ref{assump:basic}, there exists a positive constant $C_1$ s.t. for any two control functions $u_1, u_2 \in \mU$, we have
\begin{equation}\label{eq:Gronwall1}
\sup_{t \in [0,T]} \EE  \abs{x^1_t - x^2_t}^2 \le C_1 \EE\abs{x^1_0-x^2_0}^2 + C_1 \EE \sqbra{ \int_0^T \abs{u_1(t,x^1_t) - u_2(t,x^1_t)}^2 \rd t},
\end{equation}
where $x^1_t$ and $x^2_t$ are the state process \eqref{eq:SDE_X} under controls $u_1$ and $u_2$ respectively. As a direct corollary, if $u_1=u_2$, then
\begin{equation}\label{eq:Gronwall2}
\sup_{t \in [0,T]} \EE  \abs{x^1_t - x^2_t}^2 \le C_1 \EE\abs{x^1_0-x^2_0}^2.
\end{equation}
Moreover, if $x^1_0=x^2_0 \sim \text{Unif}(\mX)$, then
\begin{equation}\label{eq:Gronwall3}
\sup_{t \in [0,T]} \EE \abs{x^1_t - x^2_t}^2 \le C_1 \norm{u_1-u_2}_{L^2}^2.
\end{equation}
\end{lem}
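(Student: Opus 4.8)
The plan is to couple the two state processes by driving them with the \emph{same} Brownian motion $W_t$, and to control the squared difference $\abs{x^1_t - x^2_t}^2$ through a Gronwall argument. Since $\mX$ is a torus, I would lift the dynamics to $\RR^n$ by extending $b$, $\sigma$, $u_1$, $u_2$ periodically (exactly as in the proof of Proposition \ref{prop:rho}); the periodicity together with Assumption \ref{assump:basic} and $u_i \in \mU$ guarantees that the maps $\widetilde{b}_i(t,x) := b(x, u_i(t,x))$ are globally Lipschitz in $x$ with a constant $L$ that is uniform over $u_i \in \mU$ (the spatial gradient of $u_i$ is bounded by $K$), and likewise that $\sigma$ is Lipschitz. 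Writing $\Delta_t := x^1_t - x^2_t$ for the difference of the lifted processes, It\^o's formula gives
\begin{equation*}
\rd \abs{\Delta_t}^2 = 2 \inner{\Delta_t}{\widetilde{b}_1(t,x^1_t) - \widetilde{b}_2(t,x^2_t)} \rd t + \abs{\sigma(x^1_t) - \sigma(x^2_t)}^2 \rd t + \rd M_t,
\end{equation*}
where $M_t$ is the local martingale arising from the stochastic integral against $\sqbra{\sigma(x^1_t)-\sigma(x^2_t)}\,\rd W_t$.

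The key step is to split the drift difference so that the control discrepancy is evaluated at $x^1_t$, matching the right-hand side of \eqref{eq:Gronwall1}:
\begin{equation*}
\widetilde{b}_1(t,x^1_t) - \widetilde{b}_2(t,x^2_t) = \sqbra{b(x^1_t, u_1(t,x^1_t)) - b(x^1_t, u_2(t,x^1_t))} + \sqbra{\widetilde{b}_2(t,x^1_t) - \widetilde{b}_2(t,x^2_t)}.
\end{equation*}
The first bracket is bounded by $L\abs{u_1(t,x^1_t)-u_2(t,x^1_t)}$ using the Lipschitz dependence of $b$ on its $u$-argument, and the second by $L\abs{\Delta_t}$ using the spatial Lipschitz continuity of $\widetilde{b}_2$; the diffusion term is likewise at most $L^2\abs{\Delta_t}^2$. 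Taking expectations (the integrand of $M_t$ is bounded since $\sigma$ and $\Delta$ have bounded second moments, so $M_t$ is a true martingale with zero mean) and applying Young's inequality to the cross term $2\inner{\Delta_t}{\,\cdot\,}$, I obtain a differential inequality of the form
\begin{equation*}
\dfrac{\rd}{\rd t}\EE\abs{\Delta_t}^2 \le C\,\EE\abs{\Delta_t}^2 + L^2\,\EE\abs{u_1(t,x^1_t)-u_2(t,x^1_t)}^2,
\end{equation*}
and Gronwall's inequality then yields \eqref{eq:Gronwall1} with $C_1 = e^{CT}\max\{1,L^2\}$ after taking the supremum over $t \in [0,T]$.

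The two corollaries follow immediately. Estimate \eqref{eq:Gronwall2} is the special case $u_1 = u_2$, for which the last term vanishes. For \eqref{eq:Gronwall3}, the coupling $x^1_0 = x^2_0$ kills the initial term, and since $x^1_t$ has density $\rho^{u_1}(t,\cdot)$, Fubini together with the uniform upper bound $\rho_1$ from Proposition \ref{prop:rho} gives
\begin{equation*}
\EE\int_0^T \abs{u_1(t,x^1_t)-u_2(t,x^1_t)}^2 \rd t = \inttx{\abs{u_1(t,x)-u_2(t,x)}^2 \rho^{u_1}(t,x)} \le \rho_1 \norm{u_1-u_2}_{L^2}^2,
\end{equation*}
which is absorbed into the constant $C_1$.

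This argument is classical, so I do not expect a genuine analytic obstacle; the points requiring care are largely bookkeeping. The most delicate among them is choosing the drift splitting in the direction that leaves $u_1-u_2$ evaluated at $x^1_t$ (so that \eqref{eq:Gronwall1} has precisely the stated form), ensuring that the Lipschitz constant $L$ — and hence $C_1$ — is uniform over all $u \in \mU$ rather than control-dependent, and handling the torus cleanly by working with the periodic lifts in $\RR^n$ so that the It\^o computation for $\abs{\Delta_t}^2$ is unambiguous.
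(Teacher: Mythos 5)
Your proposal is correct and follows essentially the same route as the paper's proof: couple the two processes through a common Brownian motion, apply It\^o's formula to $\abs{x^1_t-x^2_t}^2$, split the drift difference so that $u_1-u_2$ is evaluated at $x^1_t$ (using the Lipschitz continuity of $u_2$ in $x$ to absorb the remainder), and close with Gronwall, with \eqref{eq:Gronwall3} obtained via the density bound $\rho_1$ from Proposition \ref{prop:rho} exactly as in the paper. The only cosmetic differences — your explicit periodic lift to $\RR^n$ and the differential rather than integral form of Gronwall — do not change the argument.
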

\begin{proof}
We denote $b^i_t=b(x^i_t, u_i(t,x^i_t))$, $\sigma^i_t=\sigma(x^i_t)$ for $i=1,2$, so $\rd x^i_t = b^i_t \,\rd t + \sigma^i_t \,\rd W_t$. By It\^o's lemma,
$$\rd \abs{x^1_t-x^2_t}^2 = \sqbra{\abs{\sigma^1_t-\sigma^2_t}^2 + 2\inner{x^1_t-x^2_t}{b^1_t-b^2_t}} \rd t + 2(x^1_t-x^2_t)\tp (\sigma^1_t-\sigma^2_t) \,\rd W_t.$$
Integrate and take expectation, we obtain
\begin{equation}\label{eq:square_diffx}
\EE \abs{x^1_T-x^2_T}^2 = \EE\abs{x^1_0-x^2_0}^2 + \EE \int_0^T \sqbra{\abs{\sigma^1_t-\sigma^2_t}^2 + 2\inner{x^1_t-x^2_t}{b^1_t-b^2_t}} \rd t.
\end{equation}
By the Lipschitz condition in Assumption \ref{assump:basic},
\begin{align*}
& \quad \abs{b^1_t - b^2_t} \le L \abs{x^1_t-x^2_t} + L \abs{u_1(t,x^1_t) - u_2(t,x^2_t)} \\
& \le (L+L^2) \abs{x^1_t-x^2_t} + L \abs{u_1(t,x^1_t) - u_2(t,x^1_t)}.
\end{align*}
So, 
\begin{equation}\label{eq:diffb_bound}
\abs{b^1_t - b^2_t}^2 \le 2(L+L^2)^2 \abs{x^1_t-x^2_t}^2 + 2L^2 \abs{u_1(t,x^1_t) - u_2(t,x^1_t)}^2.
\end{equation}
Also,
\begin{equation}\label{eq:diffsigma_bound}
\abs{\sigma^1_t - \sigma^2_t}^2 \le L^2 \abs{x^1_t-x^2_t}^2.
\end{equation}
Applying Cauchy's inequality, and substituting \eqref{eq:diffb_bound} and \eqref{eq:diffsigma_bound} into \eqref{eq:square_diffx}, we obtain
\begin{equation}\label{eq:square_diffx2}
\begin{aligned}
\EE \abs{x^1_T-x^2_T}^2 \le \EE\abs{x^1_0-x^2_0}^2 + \EE \int_0^T \sqbra{\abs{\sigma^1_t-\sigma^2_t}^2 + \abs{x^1_t-x^2_t}^2 + \abs{b^1_t-b^2_t}^2 }\rd t\\
\le \EE\abs{x^1_0-x^2_0}^2 + \EE \int_0^T \sqbra{10L^4 \abs{x^1_t-x^2_t}^2 + 2L^2 \abs{u_1(t,x^1_t) - u_2(t,x^1_t)}^2}\rd t
\end{aligned}
\end{equation}
Note that \eqref{eq:square_diffx2} still holds if we replace $T$ by some $T'<T$, so we can apply Gronwall's inequality and obtain
\begin{equation}\label{eq:square_diffx3}
\EE \abs{x^1_T-x^2_T}^2 \le e^{10L^4T} \EE\abs{x^1_0-x^2_0}^2 + 2L^2 e^{10L^4T} \EE \sqbra{ \int_0^T \abs{u_1(t,x^1_t) - u_2(t,x^1_t)}^2 \rd t}.
\end{equation}
Again, \eqref{eq:square_diffx3} still holds if we replace $T$ by some $T'<T$, so \eqref{eq:Gronwall1} holds. Moreover, if $x_0^1 = x_0^2 \sim \text{Unif}(\mX)$, then by Proposition \ref{prop:rho},
\begin{align*}
& \quad \EE \int_0^T \abs{u_1(t,x^1_t) - u_2(t,x^1_t)}^2 \rd t \\
& = \int_{\mX}\int_0^T \rho^{u_1}(t,x) \abs{u_1(t,x) - u_2(t,x)}^2 \rd t \rd x \le \rho_1 \norm{u_1-u_2}_{L^2}^2.
\end{align*}
Therefore, \eqref{eq:Gronwall3} holds.
\end{proof}

\begin{lem}\label{lem:regularity_Vu}
Under Assumption \ref{assump:basic}, there exists a positive constant $C_2$ s.t. for any two control functions $u_1, u_2 \in \mU$, we have
\begin{equation}\label{eq:regularity_Vu}
\norm{\nx V_{u_1} - \nx V_{u_2}}_{L^2} \le C_2 \norm{u_1-u_2}_{L^2}.
\end{equation}
\end{lem}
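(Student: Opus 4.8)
The plan is to derive the linear parabolic PDE satisfied by the difference $W := V_{u_1} - V_{u_2}$ and then extract the $L^2$ gradient bound by a standard energy estimate that exploits the uniform ellipticity. Recall that for a fixed control $u$, the HJ equation \eqref{eq:HJ} together with the definition \eqref{eq:generalized_Hamiltonian} of $G$ and $D = \frac12\sigma\sigma\tp$ reads
\[
\partial_t V_u + \Tr\parentheses{D(x)\nx^2 V_u} + \inner{b(x,u)}{\nx V_u} + r(x,u) = 0, \qquad V_u(T,\cdot) = g.
\]
Subtracting the equations for $u_1$ and $u_2$ and regrouping the drift term as $\inner{b(x,u_1)}{\nx W} + \inner{b(x,u_1)-b(x,u_2)}{\nx V_{u_2}}$, I find that $W$ solves
\[
\partial_t W + \Tr\parentheses{D(x)\nx^2 W} + \inner{b(x,u_1)}{\nx W} + f = 0, \qquad W(T,\cdot)=0,
\]
with source $f(t,x) := \inner{b(x,u_1(t,x))-b(x,u_2(t,x))}{\nx V_{u_2}(t,x)} + r(x,u_1(t,x)) - r(x,u_2(t,x))$. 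The Lipschitz bounds of Assumption \ref{assump:basic} and the uniform bound $\norm{\nx V_{u_2}}_{\infty}\le K$ (established in the preliminary remarks of the appendix, since $V_{u_2}\in C^{2,4}$) give the pointwise estimate $\abs{f(t,x)} \le C\abs{u_1(t,x)-u_2(t,x)}$, hence $\norm{f}_{L^2} \le C\norm{u_1-u_2}_{L^2}$.

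Next I would reverse time via $\bar W(\tau,x) := W(T-\tau,x)$, turning the problem into a forward parabolic equation with zero initial data, and run the energy method: differentiate $\frac12\int_\mX \bar W^2\,\rd x$ in $\tau$, integrate by parts in $x$ on the torus (no boundary terms), and use $D(x)\ge\sigma_0 I$ to produce the coercive term $-\sigma_0\int_\mX \abs{\nx\bar W}^2\,\rd x$. The remaining contributions — from the first-order piece $\inner{\nabla\cdot D}{\nx\bar W}$ generated by the integration by parts, the drift $\inner{b(x,u_1)}{\nx\bar W}$, and the source $\bar f$ — are all lower order, and Young's inequality absorbs a small multiple of $\int_\mX\abs{\nx\bar W}^2$ into the coercive term while leaving zeroth-order terms $C\int_\mX \bar W^2$ and $\frac12\int_\mX \bar f^2$. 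This produces the differential inequality
\[
\frac12\frac{\rd}{\rd\tau}\int_\mX \bar W^2\,\rd x \le -\frac{\sigma_0}{2}\int_\mX\abs{\nx\bar W}^2\,\rd x + C\int_\mX \bar W^2\,\rd x + \frac12\int_\mX \bar f^2\,\rd x.
\]

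From here I would extract the bound in two steps. Dropping the good gradient term and applying Gronwall with $\bar W(0,\cdot)=0$ gives $\sup_\tau \int_\mX \bar W^2\,\rd x \le C\norm{f}_{L^2}^2$, hence $\norm{W}_{L^2}^2 \le C\norm{f}_{L^2}^2$. Then integrating the same inequality over $\tau\in[0,T]$, using $\bar W(0,\cdot)=0$ and discarding the nonpositive endpoint term $-\frac12\int_\mX \bar W^2(T,\cdot)\,\rd x$, isolates
\[
\frac{\sigma_0}{2}\norm{\nx\bar W}_{L^2}^2 \le C\norm{W}_{L^2}^2 + \frac12\norm{f}_{L^2}^2 \le C\norm{f}_{L^2}^2,
\]
and combining with the pointwise bound on $f$ yields \eqref{eq:regularity_Vu} with $C_2$ depending only on $n$, $T$, $K$, and $\sigma_0$.

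I expect the main obstacle to be the bookkeeping in the energy estimate: ensuring every lower-order term is absorbed with the correct sign and that $C_2$ is independent of the particular controls. This hinges on the uniform regularity $\norm{\nx V_{u_2}}_{\infty}\le K$ and the uniform Lipschitz and ellipticity constants, all of which are available from Assumption \ref{assump:basic} and the appendix preliminaries. The only mild subtlety is the apparent circularity in the $C\int_\mX \bar W^2$ term, which is resolved cleanly by first establishing the $L^2$ bound on $W$ and only afterwards extracting the gradient bound.
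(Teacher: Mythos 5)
Your proposal is correct, but it takes a genuinely different route from the paper. You exploit the fact that for a fixed control the HJ equation \eqref{eq:HJ} is a \emph{linear} parabolic PDE in $V_u$, so $W = V_{u_1}-V_{u_2}$ solves a linear equation with source $f$ obeying $\abs{f(t,x)} \le C\abs{u_1(t,x)-u_2(t,x)}$, and a classical energy estimate using $D(x)\ge\sigma_0 I$ delivers the space-time gradient bound; your two-step resolution of the apparent circularity (first Gronwall for $\sup_\tau\norm{\bar W(\tau,\cdot)}_{L^2}$, then time-integration to isolate the coercive term) is exactly right, and the estimate is legitimate since $V_{u_i}\in C^{1,2}$ are classical solutions with $\norm{\nx V_{u_2}}_{C^0}\le K$, as secured in the appendix preliminaries. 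The paper argues probabilistically instead: it couples the two state processes through the same Brownian motion, invokes the stochastic Gronwall estimate of Lemma \ref{lem:gronwall}, applies It\^o's formula to $V_{u_i}(t,x^i_t)$ to obtain the representation \eqref{eq:diff_V0}, controls the martingale difference $\int_0^T \parentheses{(p^1_t)\tp\sigma^1_t-(p^2_t)\tp\sigma^2_t}\rd W_t$ via It\^o's isometry, and then uses uniform ellipticity together with the density lower bound $\rho_0$ of Proposition \ref{prop:rho} to convert the trajectory-weighted quantity $\EE\int_0^T\abs{p^1_t-p^2_t}^2\rd t$ into the Lebesgue $L^2$ norm. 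What each approach buys: yours is more elementary and self-contained (no adjoint/BSDE structure \eqref{eq:adjoint1}, no It\^o isometry, no dependence on Proposition \ref{prop:rho}), and as a byproduct it yields $\sup_t\norm{W(t,\cdot)}_{L^2}$, which subsumes the paper's Step 1 estimate \eqref{eq:regVu_0}; the paper's probabilistic route keeps the argument aligned with the adjoint-state machinery reused in Lemmas \ref{lem:J_quadratic} and \ref{lem:quadratic_Vu} and generalizes more naturally to sample-based reasoning. Both routes produce a $C_2$ depending only on $n$, $T$, $K$, and $\sigma_0$.
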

\begin{proof}
We firstly give some notations. Following the notations in the previous lemma, $x^1_t$ and $x^2_t$ are the state process w.r.t.{} controls $u_1$ and $u_2$, starting at $x^1_0 = x^2_0 = x_0 \sim \text{Unif}(\mX)$. For $i=1,2$, we have $b^i_t=b(x^i_t, u_i(t,x^i_t))$, $\sigma^i_t=\sigma(x^i_t)$, and $r^i_t=r(x^i_t, u_i(t,x^i_t))$. 
By Assumption \ref{assump:basic}, for $f = b, \sigma, r$, we have
\begin{equation*}
\begin{aligned}
& \quad \abs{f^1_t-f^2_t} = \abs{f(x^1_t, u_1(t,x^1_t))-f(x^2_t, u_2(t,x^2_t))}\\
& \le L \abs{x^1_t-x^2_t} + L \abs{u_1(t,x^1_t) - u_2(t,x^2_t)} \le (L+L^2) \abs{x^1_t-x^2_t} + L \abs{u_1(t,x^1_t) - u_2(t,x^1_t)}
\end{aligned}
\end{equation*}
and hence
\begin{equation*}
\abs{f^1_t-f^2_t}^2 \le 2(L+L^2)^2 \abs{x^1_t-x^2_t}^2 + 2L^2 \abs{u_1(t,x^1_t) - u_2(t,x^1_t)}^2.
\end{equation*}
If we make an integration and apply \eqref{eq:Gronwall3} in Lemma \ref{lem:gronwall}, we obtain
\begin{equation}\label{eq:all1-2}
\EE \int_0^T \abs{f^1_t-f^2_t}^2 \rd t \le C \EE \sqbra{ \int_0^T \abs{u_1(t,x^1_t) - u_2(t,x^1_t)}^2 \rd t} \le C \norm{u_1-u_2}_{L^2}^2.
\end{equation}
Next, we will show \eqref{eq:regularity_Vu} in two steps.

\emph{Step 1.} We want to show
\begin{equation}\label{eq:regVu_0}
\norm{V_{u_1}(0,\cdot) - V_{u_2}(0,\cdot)}_{L^2} \le C \norm{u_1-u_2}_{L^2}.
\end{equation}
Applying It\^o's lemma on $V_{u_i}(t,x^i_t)$ for $i=1,2$, we obtain
$$g(x^i_T) = V_{u_i}(0,x_0) + \int_0^T \parentheses{\partial_t V_{u_i}(t,x^i_t) + \mI_{u_i}V_{u_i}(t,x^i_t) }\rd t + \int_0^T \nx V_{u_i}(t,x^i_t)\tp \sigma^i_t \rd W_t,$$
where $\mI_{u_i}$ is the infinitesimal generator of the SDE \eqref{eq:SDE_X} under control $u_i$. Applying the HJ equation \eqref{eq:HJ} in the drift term and rearranging the terms, we get
\begin{equation}\label{eq:diff_V0}
V_{u_i}(0,x_0) = g(x^i_T) + \int_0^T r^i_t \rd t - \int_0^T \nx V_{u_i}(t,x^i_t)\tp \sigma^i_t \rd W_t.
\end{equation}
So,
\begin{equation}\label{eq:diff_v0EE}
V_{u_1}(0,x_0) - V_{u_2}(0,x_0) = \EE\sqbra{ g(x^1_T) -g(x^2_T) + \int_0^T (r^1_t - r^2_t) \rd t ~\Big|~ x_0}.
\end{equation}
Therefore,
\begin{equation*}
\begin{aligned}
& \quad \int_{\mX} \abs{V_{u_1}(0,x) - V_{u_2}(0,x)}^2 \rd x = \EE \sqbra{\parentheses{V_{u_1}(0,x_0) - V_{u_2}(0,x_0)}^2}\\
& = \EE \sqbra{\parentheses{\EE\sqbra{ g(x^1_T) -g(x^2_T) + \int_0^T (r^1_t - r^2_t) \rd t ~\Big|~ x_0}}^2} \\
& \le \EE \sqbra{\parentheses{\EE\sqbra{ L\abs{x^1_T - x^2_T} + \int_0^T \abs{r^1_t - r^2_t} \rd t ~\Big|~ x_0}}^2} \\
& \le \EE \sqbra{\parentheses{ L\abs{x^1_T - x^2_T} + \int_0^T \abs{r^1_t - r^2_t} \rd t }^2} \\
& \le \EE \sqbra{ 2L^2 \abs{x^1_T - x^2_T}^2 + 2T \int_0^T \abs{r^1_t - r^2_t}^2 \rd t} \le C \norm{u_1-u_2}_{L^2}^2,
\end{aligned}
\end{equation*}
where we have consecutively used: $x_0 \sim \text{Unif}(\mX)$; equation \eqref{eq:diff_v0EE}; Lipschitz condition of $g$ in Assumption \ref{assump:basic}; Jensen's inequality and tower property; Cauchy's inequality; Lemma \ref{lem:gronwall} and \eqref{eq:all1-2}. Therefore, we have shown
\begin{equation}\label{eq:diff_v0norm}
\norm{V_{u_1}(0,\cdot) - V_{u_2}(0,\cdot)}_{L^2}^2 \le C \norm{u_1-u_2}_{L^2}^2
\end{equation}
where this constant $C$ only depends on $K,n,T$. 

\emph{Step 2.} We want to show \eqref{eq:regularity_Vu}
\begin{equation*}
\norm{\nx V_{u_1} - \nx V_{u_2}}_{L^2} \le C \norm{u_1 - u_2}_{L^2}.
\end{equation*}
We recall that the first order adjoint equation is given by \eqref{eq:adjoint1}. We denote $p^i_t = -\nx V_{u_i}(t,x^i_t)$ and $q^i_t = -\nx^2 V_{u_i}(t,x^i_t) \sigma^i_t$ for $i=1,2$. They satisfy the BSDEs
\begin{equation}\label{eq:adjoint2}
\left\{ \begin{aligned}
\rd p^i_t & = -\sqbra{(\nx b^i_t)\tp p^i_t + \nx \Tr\parentheses{(\sigma^i_t)\tp q^i_t} - \nx r^i_t} \rd t + q^i_t ~ \rd W_t \\
p^i_T & = -\nx g(x^i_T).
\end{aligned} \right.
\end{equation}
By assumption \ref{assump:basic}, we have $\abs{p^i_t} \le K$ and $\abs{q^i_t} \le K^2$.
By \eqref{eq:diff_V0},
$$\int_0^T \parentheses{(p^1_t)\tp \sigma^1_t - (p^2_t)\tp \sigma^2_t} \rd W_t = g(x^1_T)-g(x^2_T) + \int_0^T (r^1_t-r^2_t) \rd t - \parentheses{V_{u_1}(0,x_0) - V_{u_2}(0,x_0)}.$$
Taking a square expectation and using a Cauchy inequality, we obtain
\begin{equation}\label{eq:diff_psigma_square}
\begin{aligned}
& \quad \EE \int_0^T \abs{(p^1_t)\tp \sigma^1_t - (p^2_t)\tp \sigma^2_t}^2 \rd t = \EE \parentheses{\int_0^T \parentheses{(p^1_t)\tp \sigma^1_t - (p^2_t)\tp \sigma^2_t} \rd W_t}^2 \\
& \le 3\EE \sqbra{\abs{g(x^1_T)-g(x^2_T)}^2 + T \int_0^T \abs{r^1_t-r^2_t}^2 \rd t + \abs{V_{u_1}(0,x_0) - V_{u_2}(0,x_0)}^2}\\
& \le 3L^2 \EE \abs{x^1_T - x^2_T}^2 + 3T ~ \EE \int_0^T \abs{r^1_t-r^2_t}^2 \rd t + 3 \norm{V_{u_1}(0,\cdot) - V_{u_2}(0,\cdot)}_{L^2}^2\\
& \le C \norm{u_1-u_2}_{L^2}^2,
\end{aligned}
\end{equation}
where the last inequality is because of the Gronwall inequality, estimate \eqref{eq:all1-2}, and the result in \emph{Step 1}. Also note that
$$(p^1_t)\tp \sigma^1_t - (p^2_t)\tp \sigma^2_t = (p^1_t)\tp \sigma^1_t - (p^2_t)\tp \sigma^1_t + (p^2_t)\tp \sigma^1_t - (p^2_t)\tp \sigma^2_t,$$
so
\begin{equation}\label{eq:diff_psigma1_square}
\abs{(p^1_t)\tp \sigma^1_t - (p^2_t)\tp \sigma^1_t}^2 \le 2\abs{(p^1_t)\tp \sigma^1_t - (p^2_t)\tp \sigma^2_t}^2 + 2K^2 \abs{\sigma^1_t - \sigma^2_t}^2.
\end{equation}
Next, we have
\begin{equation}\label{eq:diff_psquare}
\EE\int_0^T \abs{p^1_t-p^2_t}^2 \rd t \le \dfrac{1}{2 \sigma_0} \EE\int_0^T \abs{(p^1_t)\tp \sigma^1_t - (p^2_t)\tp \sigma^1_t}^2 \rd t \le C \norm{u_1-u_2}_{L^2}^2,
\end{equation}
where the first inequality is due to the uniform ellipticity assumption and the second is because of \eqref{eq:diff_psigma1_square}, \eqref{eq:diff_psigma_square}, and \eqref{eq:all1-2}.
Next, since
$$p^2_t - p^1_t = \nx V_{u_1}(t,x^1_t) - \nx V_{u_2}(t,x^1_t) + \nx V_{u_2}(t,x^1_t) - \nx V_{u_2}(t,x^2_t),$$
we have
\begin{equation}\label{eq:diff_nxV}
\begin{aligned}
& \quad \abs{\nx V_{u_1}(t,x^1_t) - \nx V_{u_2}(t,x^1_t)}^2 \\
&\le 2 \abs{p^1_t - p^2_t}^2 + 2 \abs{\nx V_{u_2}(t,x^1_t) - \nx V_{u_2}(t,x^2_t)}^2\\
&\le 2 \abs{p^1_t - p^2_t}^2 + 2 L^2 \abs{x^1_t-x^2_t}^2.
\end{aligned}
\end{equation}
Therefore,
\begin{equation*}
\begin{aligned}
& \quad \norm{\nx V_{u_1} - \nx V_{u_2}}_{L^2}^2 \le \dfrac{1}{\rho_0} \EE \int_0^T \abs{\nx V_{u_1}(t,x^1_t) - \nx V_{u_2}(t,x^1_t)}^2 \rd t \\
& \le C \EE \int_0^T \parentheses{\abs{p^1_t - p^2_t}^2 + \abs{x^1_t-x^2_t}^2} \rd t \le C \norm{u_1-u_2}_{L^2}^2,
\end{aligned}
\end{equation*}
where we have consecutively used: Proposition \ref{prop:rho}; equation \eqref{eq:diff_nxV}; equation \eqref{eq:diff_psquare} and Lemma \ref{lem:gronwall}. Therefore \eqref{eq:regularity_Vu} holds.
\end{proof}

\begin{lem}\label{lem:J_quadratic}
Under Assumption \ref{assump:basic}, there exists a positive constant $C_3$ s.t.
\begin{equation}\label{eq:J_quadratic}
J[u]- J[u^*] \le C_3 \norm{u-u^*}_{L^2}^2
\end{equation}
for any $u \in \mU$.
\end{lem}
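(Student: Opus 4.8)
The plan is to exploit the vanishing of the first variation of $J$ at the optimum and to control the remainder by $\norm{u-u^*}_{L^2}^2$ through a fundamental-theorem-of-calculus argument along the straight segment joining $u^*$ to $u$. Since $\mU$ is a ball in the $C^{2,4}$-norm it is convex, so the interpolants $u_\theta := u^* + \theta(u-u^*)$ all lie in $\mU$ for $\theta\in[0,1]$, and each admits a value function with the regularity recorded at the beginning of the appendix. Using the differentiability of $J$ along $\theta$ (the same $\ve$-differentiability established in the proof of Proposition \ref{prop:cost_derivative}) together with the expression \eqref{eq:actor_derivative},
\begin{equation*}
J[u]-J[u^*] = \int_0^1 \inner{\fd{J}{u_\theta}}{u-u^*}_{L^2}\,\rd\theta = -\int_0^1 \int_0^T\int_\mX \rho^{u_\theta}(t,x)\,\nbu G\parentheses{t,x,u_\theta,-\nx V_{u_\theta}}\tp (u-u^*)\,\rd x\,\rd t\,\rd\theta.
\end{equation*}
The crucial observation is that the maximum condition \eqref{eq:max1} at $u^*$ forces $\nbu G\parentheses{t,x,u^*(t,x),-\nx V_{u^*}(t,x)}=0$ pointwise, since the unconstrained maximizer in $\RR^{n'}$ is an interior critical point; hence $\fd{J}{u^*}\equiv 0$ and I may replace $\nbu G$ at $u_\theta$ by its difference with the value at $u^*$ at no cost.

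Next I would estimate this difference pointwise. Writing $\nbu G(t,x,u,p)=\nbu b(x,u)\tp p - \nbu r(x,u)$ (independent of the Hessian argument because $\sigma$ does not depend on $u$), Assumption \ref{assump:basic} makes $\nbu b,\nbu r$ bounded and Lipschitz, and the appendix regularity gives $\abs{\nx V_{u}}\le K$, so
\begin{equation*}
\abs{\nbu G\parentheses{t,x,u_\theta,-\nx V_{u_\theta}} - \nbu G\parentheses{t,x,u^*,-\nx V_{u^*}}} \le C\parentheses{\abs{u_\theta(t,x)-u^*(t,x)} + \abs{\nx V_{u_\theta}(t,x)-\nx V_{u^*}(t,x)}}.
\end{equation*}
Taking $L^2$ norms, invoking Lemma \ref{lem:regularity_Vu} to bound the value-gradient difference by $C_2\norm{u_\theta-u^*}_{L^2}$, and using the upper bound $\rho^{u_\theta}\le\rho_1$ from Proposition \ref{prop:rho}, I get
\begin{equation*}
\norm{\fd{J}{u_\theta}}_{L^2} \le \rho_1\, C(1+C_2)\,\norm{u_\theta-u^*}_{L^2} = \rho_1\,C(1+C_2)\,\theta\,\norm{u-u^*}_{L^2}.
\end{equation*}

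Finally, Cauchy--Schwarz in the $\theta$-integral yields
\begin{equation*}
J[u]-J[u^*] \le \int_0^1 \norm{\fd{J}{u_\theta}}_{L^2}\norm{u-u^*}_{L^2}\,\rd\theta \le \rho_1\,C(1+C_2)\norm{u-u^*}_{L^2}^2\int_0^1\theta\,\rd\theta,
\end{equation*}
which is \eqref{eq:J_quadratic} with $C_3=\tfrac12\rho_1 C(1+C_2)$. The routine pieces are the pointwise Lipschitz estimate for $\nbu G$ and the convexity of $\mU$; the only genuinely load-bearing input is Lemma \ref{lem:regularity_Vu}, so I expect the heavy lifting to be already done there and this lemma to follow from assembling the vanishing first variation with that stability estimate. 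The one point to watch is justifying the interchange of $\rd/\rd\theta$ with the expectation defining $J$, which is exactly the differentiability argument already carried out in Proposition \ref{prop:cost_derivative}.
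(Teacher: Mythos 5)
Your proposal is correct and follows essentially the same route as the paper's proof: integrating the first variation \eqref{eq:actor_derivative} along the segment from $u^*$ to $u$, killing the gradient at $u^*$ via the maximum condition \eqref{eq:max1}, bounding $\abs{\nbu G}$ pointwise by $C\parentheses{\abs{u_\theta-u^*}+\abs{\nx V_{u_\theta}-\nx V^*}}$, and closing with Lemma \ref{lem:regularity_Vu} and the density bound of Proposition \ref{prop:rho}. The only differences are cosmetic — your parametrization $\theta\in[0,1]$ versus the paper's $\ve\in[0,\ve_0]$ with a unit-$L^2$ direction $\phi$, and Cauchy--Schwarz in the inner product where the paper uses Young's inequality.
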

\begin{proof}
Denote $\ve_0 = \norm{u-u^*}_{L^2}$ and define $\phi$ by $u = u^*+\ve_0\phi$, then $\norm{\phi}_{L^2}=1$. We denote $u^{\ve} = u^* + \ve \phi$. Denote the corresponding value function $V_{u^{\ve}}$ by $V^{\ve}$. Denote the corresponding density function by $\rho^{\ve}$, with initial condition $\rho^{\ve}(0,\cdot) \equiv 1$.
By Proposition \ref{prop:cost_derivative},
\begin{equation}\label{eq:dJdve2}
\begin{aligned}
& \quad \dfrac{\rd}{\rd \ve}J[u^{\ve}] = \inner{\fd{J}{u}[u^{\ve}]}{\phi}_{L^2} \\
& = -\int_0^T \int_{\mX} \rho^{\ve}(t,x) \inner{\nbu G\parentheses{t,x,u^{\ve}(t,x),-\nx V^{\ve}(t,x)}}{\phi(t,x)} \rd x \, \rd t.
\end{aligned}
\end{equation}

In order to show \eqref{eq:J_quadratic}, it is sufficient to show that $\dfrac{\rd}{\rd \ve} J[u^{\ve}] \le C\ve$ for some uniform constant $C$ (that does not depend on $\phi$), because
$$J[u^*+\ve_0\phi] - J[u^*] = \int_0^{\ve_0} \dfrac{\rd}{\rd \ve}J[u^{\ve}] \, \rd \ve.$$
We estimate $\nbu G$ in \eqref{eq:dJdve2} first. 
\begin{equation}\label{eq:nuGve}
\begin{aligned}
& \quad \abs{\nbu G \parentheses{t,x,u^{\ve}(t,x),-\nx V^{\ve}(t,x)}} \\
& = \abs{\nbu G \parentheses{t,x,u^{\ve}(t,x),-\nx V^{\ve}(t,x)} - \nbu G \parentheses{t,x,u^{*}(t,x),-\nx V^*(t,x)}}\\
& \le \abs{\nbu G \parentheses{t,x,u^{\ve}(t,x),-\nx V^{\ve}(t,x)} - \nbu G \parentheses{t,x,u^{*}(t,x),-\nx V^{\ve}(t,x)}}\\
& \quad + \abs{\nbu G \parentheses{t,x,u^{*}(t,x),-\nx V^{\ve}(t,x)} - \nbu G \parentheses{t,x,u^{*}(t,x),-\nx V^*(t,x)}}\\
& =: (\rom{1}) + (\rom{2}),
\end{aligned}
\end{equation}
where we used the maximum condition \eqref{eq:max1} in the first equality. Let us also denote $u^{\ve}(t,x)$ and $u^*(t,x)$ by $u^{\ve}$ and $u^*$ for simplicity. For $(\rom{1})$, we have
\begin{equation}\label{eq:lem3rom1}
\begin{aligned}
(\rom{1}) &\le \abs{\nbu r(x,u^{\ve}) - \nbu r(x,u^*)} +  \abs{\parentheses{\nbu b(x,u^{\ve}) - \nbu b(x,u^*)}\tp \nx V^{\ve}(t,x)}\\
& \le L \ve \abs{\phi(t,x)} + L \ve \abs{\phi(t,x)}K \le C \ve \abs{\phi(t,x)},
\end{aligned}
\end{equation}
where we have used the Lipschitz conditions in Assumption \ref{assump:basic} and boundedness of the value function's derivatives. For $(\rom{2})$, we have
\begin{equation}\label{eq:lem3rom2}
(\rom{2}) \le \abs{ \nbu b(x,u^*) \tp \parentheses{\nx V^{\ve}(t,x) - \nx V^*(t,x)} } \le K \abs{\nx V^{\ve}(t,x) - \nx V^*(t,x)}.
\end{equation}
Combining \eqref{eq:lem3rom1} and \eqref{eq:lem3rom2} into \eqref{eq:nuGve}, we obtain
\begin{equation}\label{eq:bound_nuGve}
\abs{\nbu G\parentheses{t,x,u^{\ve}(t,x),-\nx V^{\ve}(t,x)}} \le C \parentheses{\ve\abs{\phi(t,x)} + \abs{\nx V^{\ve}(t,x) - \nx V^*(t,x)}}.
\end{equation}
Therefore, \eqref{eq:dJdve2} has estimate
\begin{equation*}
\begin{aligned}
& \quad \abs{\dfrac{\rd}{\rd \ve}J[u^{\ve}]} \\
& \le \rho_1 \int_0^T\int_{\mX} \parentheses{\dfrac{1}{2\ve} \abs{\nbu G \parentheses{t,x,u^{\ve}(t,x),-\nx V^{\ve}(t,x)}}^2 + \dfrac{\ve}{2} \abs{\phi(t,x)}^2} \rd x \,\rd t \\
& \le C \parentheses{\ve \norm{\phi}_{L^2}^2 + \dfrac{1}{\ve} \norm{\nx V^{\ve} - \nx V^*}_{L^2}^2 + \ve \norm{\phi}_{L^2}^2} \le C\ve,
\end{aligned}
\end{equation*}
where we have consecutively used: Proposition \ref{prop:rho} and Cauchy's inequality; inequality \eqref{eq:bound_nuGve}; Lemma \ref{lem:regularity_Vu}. Therefore, \eqref{eq:J_quadratic} holds.
\end{proof}

\begin{lem}\label{lem:quadratic_Vu}
Under Assumption \ref{assump:basic}, there exists a positive constant $C_4$ s.t. for any control function $u \in \mU$, we have
\begin{equation}\label{eq:quadratic_Vu}
\norm{\nx V_{u} - \nx V^*}_{L^2} \le C_4 \norm{u-u^*}_{L^2}^{1+\alpha},
\end{equation}
with $\alpha = \frac{1}{n+2}$.
\end{lem}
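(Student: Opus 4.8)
The plan is to study $w := V_u - V^*$, show that it is a nonnegative solution of a linear parabolic equation whose source is \emph{quadratically} small in $\norm{u-u^*}_{L^2}$ (this is the payoff of comparing against the optimal control), and then convert the resulting $L^1$-smallness of $w$ into the desired $L^2$ gradient estimate by a dimensional interpolation inequality followed by an energy estimate. Write $\ve = \norm{u-u^*}_{L^2}$. Subtracting the HJ equation \eqref{eq:HJ} for $V^*$ from that for $V_u$ gives $-\pt w = \Tr(D\nx^2 w) + \inner{b(x,u)}{\nx w} + f$ with $w(T,\cdot)=0$, where $f(t,x) = \inner{b(x,u(t,x))-b(x,u^*(t,x))}{\nx V^*(t,x)} + r(x,u(t,x))-r(x,u^*(t,x))$. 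The key observation is that $f(t,x) = \Psi_{t,x}(u(t,x))-\Psi_{t,x}(u^*(t,x))$ with $\Psi_{t,x}(u') := \inner{b(x,u')}{\nx V^*(t,x)}+r(x,u')$, which is exactly minus the $u'$-dependent part of $G\parentheses{t,x,u',-\nx V^*(t,x)}$. The maximum condition \eqref{eq:max1} then forces $\nbu \Psi_{t,x}(u^*(t,x))=0$, and strong concavity of $G$ makes $\Psi_{t,x}$ strongly convex; hence $0 \le f(t,x) \le C\abs{u(t,x)-u^*(t,x)}^2$, so $\norm{f}_{L^1}\le C\ve^2$. Optimality of $V^*$ gives $w\ge 0$.

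Next I would extract the smallness of $w$. Applying Lemma \ref{lem:J_quadratic} on each subinterval $[t,T]$ (equivalently, using the Feynman--Kac representation $w(t,x)=\EE[\int_t^T f(s,x_s)\,\rd s]$ with the nonnegative source $f$) gives $\int_{\mX} w(t,x)\,\rd x \le C\ve^2$ for every $t$, whence $\norm{w}_{L^1([0,T]\times\mX)}\le C\ve^2$. By Assumption \ref{assump:u_smooth} and Schauder regularity $w$ is bounded in $C^{2,4}$, hence Lipschitz on the $(n+1)$-dimensional space--time cylinder; for a nonnegative Lipschitz function the value at its maximum forces an $L^1$ mass of order $\norm{w}_{L^\infty}^{\,n+2}$ on a space--time ball, so $\norm{w}_{L^\infty}^{\,n+2}\le C\norm{w}_{L^1}\le C\ve^2$, i.e.\ $\norm{w}_{L^\infty}\le C\ve^{2/(n+2)}$. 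Consequently $\norm{w}_{L^2}^2 \le \norm{w}_{L^\infty}\norm{w}_{L^1}\le C\ve^{2+2/(n+2)}$. This is precisely where the exponent $\alpha=\tfrac1{n+2}$ is produced.

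Finally I would run an energy estimate for $\nx w$. Multiplying the equation by $w$ and integrating over $[0,T]\times\mX$, then using $w(T,\cdot)=0$, periodicity, and uniform ellipticity $\lambda_{\min}(D)\ge\sigma_0$, integration by parts yields $\sigma_0\norm{\nx w}_{L^2}^2 \le \tfrac{\sigma_0}{2}\norm{\nx w}_{L^2}^2 + C\norm{w}_{L^2}^2 + \inttx{w f}$, after absorbing the first-order terms $\inttx{w\inner{b}{\nx w}}$ and $\inttx{w\inner{\nx\cdot D}{\nx w}}$ by Young's inequality. Bounding $\inttx{wf}\le \norm{w}_{L^\infty}\norm{f}_{L^1}\le C\ve^{2/(n+2)}\cdot\ve^2$ and inserting the estimate for $\norm{w}_{L^2}^2$ from the previous step, both remaining right-hand terms are $O\parentheses{\ve^{2+2/(n+2)}}$, so $\norm{\nx V_u - \nx V^*}_{L^2}^2=\norm{\nx w}_{L^2}^2 \le C\ve^{2+2/(n+2)}$, which is exactly \eqref{eq:quadratic_Vu}.

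The main obstacle is concentrated in the first two steps rather than the (routine) energy estimate. Everything hinges on $f$ being quadratically, not merely linearly, small: this is the genuine consequence of comparing against the \emph{optimal} control via the maximum condition and strong concavity, and without it the whole argument collapses to the linear rate of Lemma \ref{lem:regularity_Vu}. The second delicate point is that a purely pointwise bound such as $\abs{\nx w}^2\le 2\norm{\nx^2 w}_{L^\infty}\, w$ only reproduces the linear rate once integrated against $\norm{w}_{L^1}\approx\ve^2$; the super-linear gain must instead come from feeding the nonnegativity-plus-Lipschitz interpolation on the $(n+1)$-dimensional cylinder (which both improves $\norm{w}_{L^\infty}$ and fixes $\alpha=\tfrac1{n+2}$) into the energy identity, and the careful bookkeeping that makes every error term land at the common order $\ve^{2+2/(n+2)}$ is where the proof must be handled with care.
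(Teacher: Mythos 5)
Your argument is correct, but it takes a genuinely different route from the paper's. The paper proves the lemma through the representation of Proposition \ref{prop:cost_derivative}: it reduces \eqref{eq:quadratic_Vu} to an $L^1$ bound on finite differences of $\pve V^{\ve}(0,\cdot)$ along the segment $u^{\ve}=u^*+\ve\phi$, couples two state processes started at $x_1$ and $x_2=x_1+\delta e_i$, splits the time interval at $\ve^{2\alpha}$, and controls the late interval by Aronson-type gradient estimates $\abs{\nx q^{\ve}(t,x;s,y)}\le C(s-t)^{-(n+1)/2}$ on the fundamental solution of the Fokker--Planck equation; there the exponent $\alpha=\frac{1}{n+2}$ emerges from balancing $\ve^{1-\alpha(n+1)}$ against $\ve^{\alpha}$. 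You instead work directly with $w=V_u-V^*$ as a classical solution of a linear parabolic equation with source $f$, and obtain the same exponent from the $(n+1)$-dimensional nonnegative-plus-Lipschitz volume count $\norm{w}_{L^\infty}^{\,n+2}\lesssim\norm{w}_{L^1}$, followed by a standard energy estimate. Both proofs hinge on exactly the same cancellation coming from the maximum condition \eqref{eq:max1} --- $\nbu G\parentheses{t,x,u^*(t,x),-\nx V^*(t,x)}=0$ --- which the paper packages as the bound \eqref{eq:bound_nuGve} and you as $0\le f\le C\abs{u-u^*}^2$, hence $\norm{f}_{L^1}\le C\ve^2$. What your route buys is brevity and elementarity: no fundamental-solution gradient bounds, no coupling, no finite-difference bookkeeping; the price is heavier reliance on the uniform $C^{2,4}$ regularity of $V_u$ and $V^*$ (for the space--time Lipschitz bound on $w$), which the paper's appendix preamble does in fact grant via Schauder estimates, so nothing extra is assumed.

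Two small repairs are needed, neither fatal. First, you cannot literally ``apply Lemma \ref{lem:J_quadratic} on each subinterval $[t,T]$'': that lemma is stated only for the cost at $t=0$ with uniform initial law. Your parenthetical Feynman--Kac alternative is the right fix, but it requires the density upper bound $\rho_1$ for the process started uniformly at an arbitrary time $t$; this follows from the same Aronson bounds used in the proof of Proposition \ref{prop:rho}, which hold on any subinterval (alternatively, integrate your equation for $w$ over $\mX$, integrate by parts, and apply Gronwall, avoiding densities entirely). Second, strong concavity of $G$ is not among the lemma's hypotheses and is not actually needed in your step: $f\ge 0$ follows from $u^*$ being the global argmax in \eqref{eq:max1}, and the upper bound $f\le C\abs{u-u^*}^2$ needs only the first-order condition $\nbu\Psi_{t,x}(u^*(t,x))=0$ together with the bounded $u$-Hessians of $b$ and $r$ on the ball of radius $K$, all available from Assumption \ref{assump:basic} since $u,u^*\in\mU$. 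After this observation your proof runs under precisely the lemma's stated hypotheses. (The remaining boundary details --- the half-ball at $t=0$ in the interpolation, $w(T,\cdot)=0$ ruling out a maximum at $t=T$, and the trivial regime where $\ve$ is bounded below --- are routine and handled by adjusting constants.)
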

\begin{proof}
We will inherit some notations from the previous lemma. Denote $\ve_0 = \norm{u-u^*}_{L^2}$ and let $u = u^*+\ve_0\phi$, then $\norm{\phi}_{L^2}=1$. We denote $u^{\ve} = u^* + \ve \phi$. Denote the corresponding value function $V_{u^{\ve}}$ by $V^{\ve}$. Denote the corresponding density function by $\rho^{\ve}$, with initial condition $\rho^{\ve}(0,\cdot) \equiv 1$. The key difficulty for the proof is that $\phi(t,x)$ may not lie in $\mU$ like $u^*(t,x)$ or $u^{\ve}(t,x)$, which has $K$ as a bound for itself and its derivatives. $\phi = (u^{\ve} - u^*)/ \ve$ do have some regularity, but the constant for the bounds have a factor of $2/\ve$.


It is sufficient to show the partial derivative in each dimension at $t=0$ satisfies the estimate in $L^1$ norm:
\begin{equation}\label{eq:lem4step2temp}
\norm{\partial_i V_u(0,\cdot) - \partial_i V^*(0,\cdot)}_{L^1} \le C \norm{u - u^*}_{L^2}^{1+\alpha}.
\end{equation}
This is because firstly we can repeat the argument in other dimensions and for other $t \in (0,T)$. Secondly, the derivatives of the value functions are bounded, which implies equivalence between $L^1$ and $L^2$ norm (recall we are on a torus).

\emph{Step 1.} We reformulate the problem using finite difference in this step. Let $x_1 \in \mX$ be a variable and denote $x_2 = x_1 + \delta e_i$ a perturbation. We assume $\delta >0$ without loss of generality. We have
\begin{equation}\label{eq:lem4_s2d1}
\begin{aligned}
& \quad \norm{\partial_i V_u(0,\cdot) - \partial_i V^*(0,\cdot)}_{L^1} = \int_{\mX} \abs{\partial_i V_u(0,x_1) - \partial_i V^*(0,x_1)} \rd x_1 \\
& = \int_{\mX} \abs{ \int_0^{\ve_0} \pve \partial_i V^{\ve}(0,x_1) \rd\ve } \rd x_1 = \int_{\mX} \abs{ \int_0^{\ve_0} \partial_i \pve  V^{\ve}(0,x_1) \rd\ve} \rd x_1 \\
&= \int_{\mX} \abs{ \int_0^{\ve_0} \lim_{\delta \to 0^+} \dfrac{1}{\delta} \parentheses{\pve  V^{\ve}(0,x_2) - \pve  V^{\ve}(0,x_1)} \rd\ve } \rd x_1 \\
& \le \liminf_{\delta \to 0^+} \dfrac{1}{\delta}  \int_0^{\ve_0} \int_{\mX} \abs{ \pve  V^{\ve}(0,x_2) - \pve  V^{\ve}(0,x_1)}\rd x_1 \, \rd\ve,
\end{aligned}
\end{equation}
where the last inequality is because of Fatou's lemma. Now, we denote $\xyet$ and $\xeet$ the state processes under control $u^{\ve}$ that start at $x^{1,\ve}_0=x_1$ and $x^{2,\ve}_0=x_2$. Here $\xyet$ and $\xeet$ share the same realization of Brownian motion. By Proposition \ref{prop:rho},
\begin{align*}
-\pve  V^{\ve}(0,x_1) &= \EE \sqbra{\int_0^T \inner{\nbu G \parentheses{t, \xyet, u^{\ve}(t,\xyet), -\nx V^{\ve}(t,\xyet) }}{\phi(t,x^{1,\ve}_t)} \rd t }\\
&=:  \EE \sqbra{\int_0^T \inner{\nbu G^{1,\ve}_t}{\phi^{1,\ve}_t} \rd t }.
\end{align*}
Similarly, $-\pve  V^{\ve}(0,x_2) = \EE \sqbra{\int_0^T \inner{\nbu G^{2,\ve}_t}{\phi^{2,\ve}_t} \rd t }$.
So
\begin{equation}\label{eq:lem4_s2d1a}
\begin{aligned}
& \quad \int_{\mX} \abs{  \pve  V^{\ve}(0,x_2) - \pve  V^{\ve}(0,x_1)}\rd x_1 \\
& = \int_{\mX} \abs{ \EE \sqbra{\int_0^T \inner{\nbu G^{1,\ve}_t}{\phi^{1,\ve}_t} - \inner{\nbu G^{2,\ve}_t}{\phi^{2,\ve}_t} \rd t } }\rd x_1
\end{aligned}
\end{equation}
Combining \eqref{eq:lem4_s2d1} and \eqref{eq:lem4_s2d1a}, in order to show \eqref{eq:lem4step2temp}, it is sufficient to show that
\begin{equation}\label{eq:lem4step2temp2}
\int_{\mX} \abs{ \EE \sqbra{\int_0^T   \inner{\nbu G^{1,\ve}_t}{\phi^{1,\ve}_t} - \inner{\nbu G^{2,\ve}_t}{\phi^{2,\ve}_t}   \rd t}}\rd x_1 \le C \delta \ve^{\alpha}
\end{equation}
for some uniform constant $C$. We only need to show \eqref{eq:lem4step2temp2} when $\delta \le \ve$ because $\nbu G^{j,\ve}_t|_{\ve=0}=0$ ($j=1,2$), which implies \eqref{eq:lem4step2temp2} when $\ve=0$.

\emph{Step 2.} We split into two sub-tasks to show \eqref{eq:lem4step2temp2} in this step. Let us denote $\rho^{1,\ve}(t,x)$ and $\rho^{2,\ve}(t,x)$ the density functions of $\xyet$ and $\xeet$, then $\rho^{j,\ve}(0,\cdot)=\delta_{x_j} \, (j=1,2)$ and an sufficient condition for \eqref{eq:lem4step2temp2} is
\begin{equation}\label{eq:lem4step2temp3}
\begin{aligned}
& \int_{\mX} \int_0^T \int_{\mX} \Big| \inner{\nbu G\parentheses{t,x,u^{\ve}(t,x), -\nx V^{\ve}(t,x)}}{\phi(t,x)} \\
& \hspace{0.8in} \parentheses{\rho^{1,\ve}(t,x) - \rho^{2,\ve}(t,x)} \Big| \, \rd x\, \rd t\,\rd x_1 \le C \delta \ve^{\alpha}.
\end{aligned}
\end{equation}
The idea to prove \eqref{eq:lem4step2temp2} is to decompose the time interval $[0,T]$ into two sub-intervals $[0,\ve^{2\alpha}]$ and $(\ve^{2\alpha},T]$ and prove \eqref{eq:lem4step2temp2} and \eqref{eq:lem4step2temp3} with $\int_0^T$ replaced by the corresponding intervals respectively. For the first part, we take advantage that $\ve^{2\alpha}$ is small, while for the second part, we use the fact that $\rho^{1,\ve}(t,x)$ and $\rho^{2,\ve}(t,x)$ are nicely mixed.

\emph{Step 3.} We estimate the integration in the interval $[0,\ve^{2\alpha}]$ in this step. We want to show
\begin{equation}\label{eq:lem4step2temp4}
\int_{\mX} \EE \sqbra{\int_0^{\ve^{2\alpha}} \abs{  \inner{\nbu G^{1,\ve}_t}{\phi^{1,\ve}_t} - \inner{\nbu G^{2,\ve}_t}{\phi^{2,\ve}_t}  } \rd t}\rd x_1 \le C \delta \ve^{\alpha}.
\end{equation}
Using the Lipschitz property and boundedness of $\nbu r$, $\nbu b$, and $\nx V^{\ve}$, we can show
$$\abs{\nbu G^{1,\ve}_t - \nbu G^{2,\ve}_t} \le C \abs{\xyet-\xeet}.$$
Also, we have $\abs{\pyet-\peet} \le \dfrac{2L}{\ve} \abs{\xyet-\xeet}$. Therefore,
\begin{equation*}
\begin{aligned}
& \quad \abs{ \inner{\nbu G^{1,\ve}_t}{\phi^{1,\ve}_t} - \inner{\nbu G^{2,\ve}_t}{\phi^{2,\ve}_t} }\\
& = \abs{ \inner{\nbu G^{1,\ve}_t - \nbu G^{2,\ve}_t}{\phi^{1,\ve}_t} + \inner{\nbu G^{2,\ve}_t}{\phi^{1,\ve}_t-\phi^{2,\ve}_t} }\\
& \le \abs{\nbu G^{1,\ve}_t - \nbu G^{2,\ve}_t} \, \abs{\phi^{1,\ve}_t} + \abs{\nbu G^{2,\ve}_t} \, \abs{\phi^{1,\ve}_t-\phi^{2,\ve}_t}\\
& \le C \left(\abs{\pyet} + \abs{\peet} + \dfrac{1}{\ve} \abs{\nx V^{\ve}(t,\xeet) - \nx V^*(t,\xeet)} \right) \abs{\xyet-\xeet}
\end{aligned}
\end{equation*}
where we have used \eqref{eq:bound_nuGve} to estimate $\abs{\nbu G^{2,\ve}_t}$ in the last inequality. Substituting the estimate above into \eqref{eq:lem4step2temp4} left, we obtain
\begin{equation}\label{eq:lem4_step23}
\begin{aligned}
& \quad \int_{\mX} \EE \sqbra{\int_0^{\ve^{2\alpha}} \abs{  \inner{\nbu G^{1,\ve}_t}{\phi^{1,\ve}_t} - \inner{\nbu G^{2,\ve}_t}{\phi^{2,\ve}_t}  } \rd t}\rd x_1 \\
& \le C \int_{\mX} \EE \left[ \int_0^{\ve^{2\alpha}} \left(  \delta \ve^{\alpha} \abs{\pyet}^2 + \delta \ve^{\alpha} \abs{\peet}^2 + \delta\ve^{\alpha-2} \abs{\nx V^{\ve}(t,\xeet) - \nx V^*(t,\xeet)}^2 \right. \right. \\
& \hspace{0.4in} \left. \left.  + \dfrac{1}{\delta \ve^{\alpha}} \abs{\xyet-\xeet}^2 \right) \rd t \right] \rd x_1 \\
& \le C \rho_1 \parentheses{ 2\delta \ve^{\alpha} \norm{\phi}_{L^2}^2 + \delta\ve^{\alpha-2} \norm{\nx V^{\ve} - \nx V^*}_{L^2}^2 } + C\int_{\mX} \dfrac{1}{\delta \ve^{\alpha}} \int_0^{\ve^{2\alpha}} \EE \abs{\xyet-\xeet}^2 \rd t \, \rd x_1 \\
& \le C \parentheses{ \delta \ve^{\alpha} + \delta\ve^{\alpha-2} \ve^2  } + C \int_{\mX} \dfrac{1}{\delta \ve^{\alpha}} \int_0^{\ve^{2\alpha}} C_1 \abs{x_1-x_2}^2 \rd t \, \rd x_1 \\
& \le C \delta \ve^{\alpha} + \dfrac{C}{\delta \ve^{\alpha}} \ve^{2\alpha} \delta^2 \le C \delta \ve^{\alpha}.
\end{aligned}
\end{equation}
Here, the first inequality is just Cauchy's inequality. For the third inequality, we used Lemma \ref{lem:regularity_Vu} and the Gronwall inequality \eqref{eq:Gronwall2}. In the fourth inequality, we used $\abs{x_1-x_2}=\delta$. We give an explanation of the second inequality in \eqref{eq:lem4_step23} next. After confirming this second inequality, we get \eqref{eq:lem4step2temp4}.

Although $x^{1,\ve}_0=x_1$ and $x^{2,\ve}_0=x_2$ are fixed points, we are integrating $x_1$ over $\mX$ (with $x_2-x_1 = \delta e_i$ fixed). So, we can define two new processes $\overline{x}^{1,\ve}_t$ and $\overline{x}^{2,\ve}_t$ that have the same dynamic as $x^{1,\ve}_t$ and $x^{2,\ve}_t$, but start at uniform distribution in $\mX$, with $\overline{x}^{2,\ve}_t - \overline{x}^{1,\ve}_t \equiv \delta e_i$. The densities for $\overline{x}^{2,\ve}_t$ and $\overline{x}^{1,\ve}_t$ (denoted by $\overline{\rho}^{1,\ve}(t,x)$ and $\overline{\rho}^{2,\ve}(t,x)$) satisfies the estimate in Proposition \ref{prop:rho}. Therefore,
\begin{equation}\label{eq:view_uniform}
\begin{aligned}
& \quad \int_{\mX} \EE \sqbra{\int_0^{\ve^{2\alpha}} \abs{\phi^{1,\ve}_t}^2 \rd t} \rd x_1 \equiv \int_{\mX} \EE \sqbra{\int_0^{\ve^{2\alpha}} \abs{\phi(t,\xyet)}^2 \rd t ~\Big|~ x^{1,\ve}_0 = x_1} \rd x_1\\
&= \EE_{\overline{x}^{1,\ve}_0 \sim \text{Unif}(\mX)} \EE \sqbra{\int_0^{\ve^{2\alpha}} \abs{\phi(t, \overline{x}^{1,\ve}_t)}^2 \rd t ~\Big|~ \overline{x}^{1,\ve}_0} = \EE \int_0^{\ve^{2\alpha}} \abs{\phi(t, \overline{x}^{1,\ve}_t)}^2 \rd t \\
& = \int_{\mX} \int_0^{\ve^{2\alpha}} \abs{\phi(t, x)}^2 \overline{\rho}^{1,\ve}(t,x) \, \rd t \, \rd x \le \rho_1 \int_{\mX} \int_0^{\ve^{2\alpha}} \abs{\phi(t, x)}^2 \rd t \, \rd x \le \rho_1 \norm{\phi}_{L^2}^2.
\end{aligned}
\end{equation}
$\phi^{2,\ve}_t$ satisfies the same inequality. The analysis for the $\nx V$ terms in the second line of \eqref{eq:lem4_step23} are exactly the same. Therefore, we can apply Proposition \ref{prop:rho}, and the second inequality in \eqref{eq:lem4_step23} holds. Hence, we confirm that \eqref{eq:lem4step2temp4} holds.

\emph{Step 4.} We estimate the integration in the interval $[\ve^{2\alpha},T]$ in this step. We want to show 
\begin{equation}\label{eq:lem4step2temp5}
\int_{\ve^{2\alpha}}^T \int_{\mX} \abs{  \inner{\nbu G\parentheses{t,x,u^{\ve}, -\nx V^{\ve}(t,x)}}{\phi(t,x)} \parentheses{\rho^{1,\ve}(t,x) - \rho^{2,\ve}(t,x)} }\rd x\, \rd t \le C \delta \ve^{\alpha}.
\end{equation}
We recall that $\rho$ is the solution of the Fokker Planck equation $\partial_t \rho = \mI_{\ve}^{\dagger} \rho$, where $\mI_{\ve}$ is the infinitesimal generator of the state process with control $u^{\ve}$ and $\mI_{\ve}^{\dagger}$ is its adjoint. Let us use $p^{\ve}(t,x;s,y)$ ($t \ge s$) to denote the fundamental solution of this PDE. Then, $\rho^{j,\ve}(t,x) = p^{\ve}(t,x;0,x_j)$ for $j=1,2$. The fundamental solution of linear parabolic PDE is well-studied, and a comprehensive description can be found in \cite{friedman2008partial}. A key observation of the fundamental solution $p^{\ve}$ is that $q^{\ve}(t,x;s,y) := p^{\ve}(s,y;t,x)$ is the fundamental solution of the backward Kolmogorov equation $\partial_t \psi + \mI_{\ve} \psi = 0$ \citep{ito1953fundamental}. Therefore, the regularity of $p^{\ve}(t,x;s,y)$ in $y$ (here $t \ge s$) is equivalent to the regularity of $q^{\ve}(t,x;s,y)$ in $x$ (here $s \ge t$).  \cite{aronson1959fundamental} proved (in Lemma 4.2) that
\begin{equation}\label{eq:lemma4.2}
\abs{\nx^k q^{\ve}(t,x;s,y)} \le C^{(k)} (s-t)^{-(n+k)/2}.
\end{equation}
Applying a standard mean value theorem and this lemma \eqref{eq:lemma4.2} with $k=1$ to $q^{\ve}$, we obtain
\begin{equation}\label{eq:diff_rho}
\begin{aligned}
& \quad \abs{\rho^{1,\ve}(t,x) - \rho^{2,\ve}(t,x)} = \abs{ q^{\ve}(0,x_1;t,x) - q^{\ve}(0,x_2;t,x) } \\
& = \abs{\inner{\nx q^{\ve}(0,(1-c)x_1 + cx_2;t,x)}{x_1 - x_2} } \le C t^{-(1+n)/2} \abs{x_1-x_2} = C t^{-(1+n)/2} \delta,
\end{aligned}
\end{equation}
where we clarify that $\nx$ is operated on the second (not fourth) argument on $q^{\ve}(t,x;s,y)$. Therefore,
\begin{equation*}
\begin{aligned}
& \quad \int_{\ve^{2\alpha}}^T \int_{\mX} \abs{  \inner{\nbu G\parentheses{t,x,u^{\ve}, -\nx V^{\ve}(t,x)}}{\phi(t,x)} \parentheses{\rho^{1,\ve}(t,x) - \rho^{2,\ve}(t,x)} }\rd x\, \rd t \\
& \le C \int_{\ve^{2\alpha}}^T \int_{\mX} \parentheses{\ve\abs{\phi(t,x)} + \abs{\nx V^{\ve}(t,x) - \nx V^*(t,x)} } \abs{\phi(t,x)} t^{-(1+n)/2} \delta \, \rd x\, \rd t \\
& \le C \delta \ve^{-\alpha-n\alpha} \int_{\ve^{2\alpha}}^T \int_{\mX} \parentheses{ 
\ve \abs{\phi(t,x)}^2 + \dfrac{1}{\ve} \abs{\nx V^{\ve}(t,x) - \nx V^*(t,x)}^2 }\rd x\, \rd t \\
& \le C \delta \ve^{-\alpha-n\alpha} \parentheses{\ve \norm{\phi}_{L^2}^2 + \dfrac{1}{\ve} \norm{\nx V^\ve - \nx V^*}_{L^2}^2} \le C \delta \ve^{1-\alpha-n\alpha} \le C \delta \ve^{\alpha}.
\end{aligned}
\end{equation*}
We used \eqref{eq:bound_nuGve} and \eqref{eq:diff_rho} in the first inequality, and used Lemma \ref{lem:regularity_Vu} in the thourth inequality. So, \eqref{eq:lem4step2temp5} holds.

To conclude, we combine \eqref{eq:lem4step2temp4} and \eqref{eq:lem4step2temp5} and recover \eqref{eq:lem4step2temp2}. Therefore, \eqref{eq:lem4step2temp} and hence \eqref{eq:quadratic_Vu} hold.
\end{proof}

\begin{lem}\label{lem:rho_speed}
Let Assumption \ref{assump:basic}, \ref{assump:u_smooth} hold, then there exist a constant $C_5$ such that
\begin{equation}\label{eq:rho_speed}
\dfrac{\rd}{\rd \tau} \rho^\tau(t,x) \le C_5 \alpha_a
\end{equation}
for all $(t,x) \in [0,T] \times \mX$ and $\tau \ge 0$.
\end{lem}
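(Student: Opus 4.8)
The plan is to differentiate the Fokker--Planck equation \eqref{eq:FokkerPlanck} with respect to the training time $\tau$, show that the resulting quantity $\eta^\tau := \frac{\rd}{\rd\tau}\rho^\tau$ solves a linear parabolic equation with a controllably small, divergence-form source, and then estimate $\eta^\tau$ via Duhamel's principle against the fundamental solution. The single power of $\alpha_a$ in \eqref{eq:rho_speed} will come directly from the actor dynamic \eqref{eq:actor_dynamic}, which carries exactly one factor of $\alpha_a$ in $\frac{\rd}{\rd\tau}u^\tau$.

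First I would establish that $\tau\mapsto\rho^\tau(t,\cdot)$ is differentiable (using the smooth dependence of the Fokker--Planck solution on the control, as in the proof of Proposition \ref{prop:cost_derivative}, together with the smoothness of the actor dynamic \eqref{eq:actor_dynamic}) and derive the equation for $\eta^\tau$. Differentiating \eqref{eq:FokkerPlanck} in $\tau$ and using $\frac{\rd}{\rd\tau}b(x,u^\tau)=\nbu b(x,u^\tau)\frac{\rd u^\tau}{\rd\tau}$ gives
$$\partial_t \eta^\tau = \mI_{u^\tau}^{\dagger}\eta^\tau - \nx\cdot\left(\nbu b(x,u^\tau(t,x))\,\tfrac{\rd u^\tau}{\rd\tau}(t,x)\,\rho^\tau(t,x)\right),$$
with zero initial data $\eta^\tau(0,\cdot)\equiv 0$, since $\rho^\tau(0,\cdot)\equiv 1$ for every $\tau$. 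The source is in divergence form, and its vector field $F^\tau(t,x):=\nbu b(x,u^\tau)\,\frac{\rd u^\tau}{\rd\tau}\,\rho^\tau$ is uniformly small: by \eqref{eq:actor_dynamic} one has $\abs{\frac{\rd u^\tau}{\rd\tau}}\le\alpha_a\rho_1\,\abs{\nbu G(t,x,u^\tau,-\mG^\tau)}$, and Assumptions \ref{assump:basic}--\ref{assump:u_smooth} together with Proposition \ref{prop:rho} bound $\nbu G$, $\nbu b$, and $\rho^\tau\le\rho_1$, so that $\abs{F^\tau}\le\alpha_a C_F$ for a constant $C_F$ depending only on $K,\sigma_0,n,n',m,T$.

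Next I would write $\eta^\tau$ through Duhamel's formula against the fundamental solution $p^{u^\tau}(t,x;s,y)$ of the Fokker--Planck operator and integrate by parts in $y$ to move the divergence onto the kernel:
$$\eta^\tau(t,x)=\int_0^t\int_\mX \nx_y p^{u^\tau}(t,x;s,y)\cdot F^\tau(s,y)\,\rd y\,\rd s.$$
The integration by parts is legitimate because $F^\tau$ is $C^1$ in $y$ under the assumed regularity of $u^\tau$ and $\mG^\tau$. The crucial ingredient is the \emph{Gaussian} gradient bound $\abs{\nx_y p^{u^\tau}(t,x;s,y)}\le C(t-s)^{-(n+1)/2}\exp(-c\abs{x-y}^2/(t-s))$, the analog of \eqref{eq:lemma4.2} already invoked in Lemma \ref{lem:quadratic_Vu}. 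Integrating this in $y$ over $\mX$ (summing the $\ZZ^n$-translates exactly as in the proof of Proposition \ref{prop:rho}) collapses the spatial factor to $\int_\mX \abs{\nx_y p^{u^\tau}(t,x;s,y)}\,\rd y\le C(t-s)^{-1/2}$, which is integrable in $s$. Combining with $\abs{F^\tau}\le\alpha_a C_F$ gives
$$\abs{\frac{\rd}{\rd\tau}\rho^\tau(t,x)}\le \alpha_a C_F\int_0^t C(t-s)^{-1/2}\,\rd s \le 2CC_F\sqrt{T}\,\alpha_a=:C_5\alpha_a,$$
and the one-sided bound \eqref{eq:rho_speed} follows a fortiori.

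I expect the main obstacle to be the analytic justification of the two representation steps. On one hand, one must rigorously establish differentiability of $\rho^\tau$ in $\tau$ and the interchange of $\partial_\tau$ with $\partial_t$ and $\nx$; on the other, one must secure the correct \emph{integrable} gradient estimate on the fundamental solution. Note that the pointwise bound \eqref{eq:lemma4.2} used earlier, if integrated naively over the torus, yields only the nonintegrable $(t-s)^{-(n+1)/2}$, so it is essential to retain the Gaussian spatial decay in the kernel gradient so that the $y$-integral reduces to $(t-s)^{-1/2}$. Everything else reduces to bounding $F^\tau$ with the uniform estimates already furnished by Assumptions \ref{assump:basic}--\ref{assump:u_smooth} and Proposition \ref{prop:rho}.
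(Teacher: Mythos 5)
Your proposal is correct, but it takes a genuinely different route from the paper's proof. The paper fixes $\tau$, writes $u^{\tau,\ve}=u^\tau+\ve\phi$ with $\phi=\alpha_a\rho^\tau\,\nbu G(t,x,u^\tau,-\mG^\tau)$, differentiates the Fokker--Planck equation in $\ve$, and then \emph{time-reverses} the resulting linear parabolic equation so that $\pve\rho^{\tau,\ve}$ itself solves a backward equation with zero terminal data, a bounded potential $V_b^\ve$, and a source $f_b^\ve$; the bound then follows from the Feynman--Kac representation. The structural difference is where the divergence in the source lands: the paper expands $\nx\cdot\sqbra{\nbu b\,\phi\,\rho^{\tau,\ve}}$ and must bound it pointwise, which is precisely why it invokes the gradient bound $\abs{\nx\rho^{\tau,\ve}}\le\rho_2$ from Proposition \ref{prop:rho} and the bound on $\nx\mG^\tau$ from Assumption \ref{assump:u_smooth}. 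You instead keep the source in divergence form and, via Duhamel's formula, integrate by parts to put the derivative on the fundamental solution, so only $\norm{F^\tau}_{L^\infty}\le C\alpha_a$ enters the final constant; the price is the Gaussian gradient estimate $\abs{\nabla_y p^{u^\tau}(t,x;s,y)}\le C(t-s)^{-(n+1)/2}\exp\parentheses{-c\abs{x-y}^2/(t-s)}$ in the backward spatial variable, which is strictly stronger than the bound \eqref{eq:lemma4.2} quoted elsewhere in the paper --- and you are right that the latter, lacking the Gaussian factor, would leave a nonintegrable $(t-s)^{-(n+1)/2}$ after the $y$-integration. That stronger estimate is classical for the smooth coefficients guaranteed by Assumptions \ref{assump:basic}--\ref{assump:u_smooth} (e.g.\ via Friedman's parametrix bounds applied to the adjoint kernel, exactly the duality $q^{\ve}(t,x;s,y)=p^{\ve}(s,y;t,x)$ the paper uses in Lemma \ref{lem:quadratic_Vu}), so your argument closes, but it should be cited or proved rather than asserted. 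In exchange, your route never differentiates the data ($\nx\rho^\tau$ and $\nx\mG^\tau$ affect only the legitimacy of the integration by parts, not the constant), whereas the paper's Feynman--Kac route needs no kernel estimates at all and reuses only bounds it has already established; both arguments in fact yield the two-sided bound $\abs{\frac{\rd}{\rd\tau}\rho^\tau(t,x)}\le C_5\alpha_a$, of which \eqref{eq:rho_speed} is the stated half.
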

Recall that $\alpha_a$ is the actor speed and $\rho^\tau$ is short for $\rho^{u^\tau}$. This lemma informs us that the density is changing with a bounded speed.
\begin{proof}
Let $u^{\tau,\ve} = u^\tau + \ve \phi$ where
$$\phi(t,x) = \alpha_a \rho^\tau(t,x) \nbu G\parentheses{t,x,u^\tau(t,x),-\mG^\tau(t,x)}.$$
Here $\abs{\ve}$ is small such that $u^\ve \in \mU$. We denote $\rho^{u^{\tau,\ve}}$ by $\rho^{\tau,\ve}$. Then,
$$\dfrac{\rd}{\rd \tau} \rho^\tau(t,x) = \dfrac{\rd}{\rd \ve} \rho^{\tau,\ve}(t,x) \Big|_{\ve=0}.$$

We will derive a backward parabolic equation for $\dfrac{\rd}{\rd \tau} \rho^\tau(t,x)$ and use Feynman-Kac formula to represent $\partial_\ve \rho^{\tau,\ve}(t,x)$ and show its boundedness.

Taking derivative of the Fokker Planck w.r.t. $\ve$, we obtain
\begin{equation}\label{eq:dFP_dve}
\begin{aligned}
\pt \pve \rho^{\tau,\ve}(t,x) &= -\nx \cdot \sqbra{b(x, u^\ve(t,x)) \, \pve\rho^{\tau,\ve}(t,x)} + \sum_{i,j=1}^n \partial_i \partial_j \sqbra{D_{ij}(x) \,\pve\rho^{\tau,\ve}(t,x)}\\
& \quad - \nx \cdot \sqbra{\nbu b(x, u^\ve(t,x)) \,\phi(t,x) \, \rho^{\tau,\ve}(t,x)}
\end{aligned}
\end{equation}
In order to show the bound \eqref{eq:rho_speed}, we rewrite \eqref{eq:dFP_dve} into a backward parabolic equation for $\pve \rho^{\tau,\ve}(t,x)$. We denote $\rho_b^\ve(t,x) := \pve \rho^{\tau,\ve}(T-t,x)$, where the subscript ``$b$'' means backward. It satisfies $\pve \rho_b^\ve(T,x) = 0$ and
\begin{equation*}
\begin{aligned}
-\pt \rho_b^\ve(t,x) &= -\nx \cdot \sqbra{b(x, u^\ve(T-t,x)) \, \rho_b^\ve(t,x)} + \sum_{i,j=1}^n \partial_i \partial_j \sqbra{D_{ij}(x) \,\rho_b^\ve(t,x)}\\
& \quad - \nx \cdot \sqbra{\nbu b(x, u^\ve(T-t,x)) \,\phi(T-t,x) \, \rho^{\tau,\ve}(T-t,x)},
\end{aligned}
\end{equation*}
which simplifies to
\begin{equation}
\begin{aligned}
0 & = \pt \rho_b^\ve(t,x) + \Tr\sqbra{D(x) \nx^2 \rho_b^\ve(t,x)} \\
& \quad + \sum_{i,j=1}^n \parentheses{\partial_i D_{ij}(x) \partial_j \rho_b^\ve(t,x) + \partial_j D_{ij}(x) \partial_i \rho_b^\ve(t,x)} + \inner{b(x, u^\ve(T-t,x))}{\nx \rho_b^\ve(t,x)} \\
& \quad + \sqbra{\sum_{i,j=1}^n \partial_i \partial_j D_{i,j}(x) - \nx \cdot \parentheses{ b(x, u^\ve(T-t,x))}} \rho_b^\ve(t,x)\\
& \quad -\nx \cdot \sqbra{\nbu b(x,u^\ve(T-t,x)) \, \phi(T-t,x) \, \rho^{\tau,\ve}(T-t,x)} \\
& =: \pt \rho_b^\ve(t,x) + \Tr\sqbra{D(x) \nx^2 \rho_b^\ve(t,x)} + \inner{b^\ve_b(t,x)}{\nx \rho_b^\ve(t,x)} + V_b^\ve(x) \rho_b^\ve(t,x) + f_b^\ve(t,x).
\end{aligned}
\end{equation}
Therefore, if we define an SDE on $\mX$
$$\rd x^{b,\ve}_t = b_b^\ve(t,x^{b,\ve}_t) \,\rd t + \sigma(x^{b,\ve}_t) \,\rd W_t,$$
then, using the Feynman Kac formula, (and recall $\rho_b^\ve(T,x)=0$,) we know that
\begin{equation}\label{eq:FK_rho}
\rho_b^\ve(t,x) = \EE \sqbra{ \int_t^T \exp\parentheses{-\int_t^s V_b^\ve(t',x^{b,\ve}_{t'}) \,\rd t'} f^\ve_b(s,x^{b,\ve}_s) \,\rd s ~\bigg|~ x^{b,\ve}_t=x }.
\end{equation}
Next, we show boundedness of $V^\ve_b$ and $f^\ve_b$ to recover \eqref{eq:rho_speed}. Since Assumption \ref{assump:basic} holds and $u^\ve \in \mU$,
$$\abs{V^\ve_b(t,x)} = \abs{\sum_{i,j=1}^n \partial_i \partial_j D_{i,j}(x) - \nx \parentheses{b(x, u^\ve(T-t,x))}} \le nK + K + K^2.$$
Therefore, $V^\ve_b$ is bounded. For $f^\ve_b$,
\begin{equation}
\begin{aligned}
& \quad f^\ve_b(T-t,x) = -\nx \cdot \sqbra{\nbu b(x,u^\ve(t,x)) \, \phi(t,x) \, \rho^{\tau,\ve}(t,x)}\\
& = -\alpha_a\nx \cdot \sqbra{\nbu b(x,u^\ve(t,x)) ~~ \nbu G\parentheses{t,x,u^\tau(t,x),-\mG^\tau(t,x)} \, \rho^{\tau,\ve}(t,x)^2}\\
& = \alpha_a\nx \cdot \sqbra{\nbu b(x,u^\ve(t,x)) ~ \nbu b(x,u^\ve(t,x))\tp \mG^\tau(t,x) ~ \rho^{\tau,\ve}(t,x)^2} \\
& \quad + \alpha_a\nx \cdot \sqbra{\nbu b(x,u^\ve(t,x)) ~ \nbu r(x,u^\ve(t,x)) ~ \rho^{\tau,\ve}(t,x)^2}
\end{aligned}
\end{equation}
Next, we show boundedness of each term above. $\abs{\nbu b}, \abs{\nbu r} \le K$ because of Assumption \ref{assump:basic}.  $\abs{\nx \nbu b(x,u^\ve(t,x))}, \abs{\nx \nbu r(x,u^\ve(t,x))} \le K + K^2$ because Assumption \ref{assump:basic} holds and $u^\ve \in \mU$. $\abs{\mG}, \abs{\nx \mG} \le K$ because of Assumption \ref{assump:u_smooth}. $\abs{\rho^{\tau,\ve}} \le \rho_1$ and $\abs{\nx \rho^{\tau,\ve}(t,x)} \le \rho_2$ because of Proposition \ref{prop:rho}. Concluding these bounds, we know that $\abs{f_b^\ve(t,x)} \le C \alpha_a$ for any $t\in[0,T]$ and $x\in \mX$.

Combining the bounds for $V^\ve_b$ and $f^\ve_b$ into \eqref{eq:FK_rho}, we obtain
$$\abs{\pve \rho^{\tau,\ve}(t,x)} = \abs{\rho_b^\ve(T-t,x)} \le C_5 \alpha_a.$$
Therefore, 
\begin{equation*}
\dfrac{\rd}{\rd \tau} \rho^\tau(t,x) \le C_5 \alpha_a. 
\end{equation*}
\end{proof}

\section{Proofs for the theorems}\label{sec:thms}

\begin{proof}[Proof for theorem \ref{thm:critic_improvement}]
As is explained in Section \ref{sec:critic} \eqref{eq:critic_loss2}, the critic loss $\mL_c^\tau$ is the sum of the $L^2$ error for $\mV_0$
$$\mL_0^\tau = \frac12 \int_\mX \parentheses{\mV_0^\tau(x) - V_{u^\tau}(0,x)}^2 \rd x$$
and the gradient error
$$\mL_1^\tau = \inttx{ \rho^\tau(t,x) \abs{\sigma(x)\tp \parentheses{\mG^\tau(t,x) - \nx V_{u^\tau}(t,x)}}^2 }.$$
Taking derivative w.r.t. $\tau$ and plug in the critic dynamic \eqref{eq:V0dynamic}, we obtain
\begin{equation}\label{eq:dL0_dtau}
\begin{aligned}
& \quad \dfrac{\rd}{\rd \tau} \mL_0^\tau = \int_\mX \parentheses{\mV_0^\tau(x) - V_{u^\tau}(0,x)} \parentheses{\dfrac{\rd}{\rd \tau}\mV_0^\tau(x) - \dfrac{\rd}{\rd \tau}V_{u^\tau}(0,x)} \rd x \\
& = - \alpha_c \int_\mX \parentheses{\mV_0^\tau(x) - V_{u^\tau}(0,x)}^2 \rd x - \int_\mX \parentheses{\mV_0^\tau(x) - V_{u^\tau}(0,x)} \dfrac{\rd}{\rd \tau} V_{u^\tau}(0,x) \rd x\\
& \le - 2 \alpha_c \mL_0^\tau + \frac12 \alpha_c \int_\mX \parentheses{\mV_0^\tau(x) - V_{u^\tau}(0,x)}^2 \rd x + \dfrac{1}{2\alpha_c} \int_\mX\parentheses{\dfrac{\rd}{\rd \tau} V_{u^\tau}(0,x)}^2 \rd x \\
& = - \alpha_c \mL_0^\tau + \dfrac{1}{2\alpha_c} \int_\mX\parentheses{\dfrac{\rd}{\rd \tau} V_{u^\tau}(0,x)}^2 \rd x,
\end{aligned}
\end{equation}
where the inequality is because of Cauchy Schwartz inequality. The last term in \eqref{eq:dL0_dtau} satisfies
\begin{equation}\label{eq:dVdtau}
\begin{aligned}
& \quad \int_\mX\parentheses{\dfrac{\rd}{\rd \tau} V_{u^\tau}(0,x)}^2 \rd x = \int_\mX 
\lim_{\Delta\tau \to 0} \dfrac{1}{\Delta\tau^2} \parentheses{V_{u^{\tau+\Delta\tau}}(0,x) - V_{u^{\tau}}(0,x)}^2 \rd x\\
& \le \liminf_{\Delta\tau \to 0} \dfrac{1}{\Delta\tau^2} \int_\mX \parentheses{V_{u^{\tau+\Delta\tau}}(0,x) - V_{u^{\tau}}(0,x)}^2 \rd x \\
& \le C_2 \liminf_{\Delta\tau \to 0} \dfrac{1}{\Delta\tau^2} \norm{u^{\tau+\Delta\tau} - u^\tau}^2_{L^2},
\end{aligned}
\end{equation}
where we used Fatou's Lemma and \emph{Step 1} in Lemma \ref{lem:regularity_Vu} in the two inequalities. Since
\begin{align*}
& \quad u^{\tau+\Delta\tau}(t,x) - u^\tau(t,x) = \int_\tau^{\tau+\Delta\tau} \dfrac{\rd}{\rd \tau'} u^{\tau'}(t,x) \rd x\\
& = \alpha_a \int_\tau^{\tau+\Delta\tau} \rho^{\tau'}(t,x) \nbu G\parentheses{t,x,u^{\tau'}(t,x), -\mG^{\tau'}(t,x)} \,\rd \tau',
\end{align*}
we have
\begin{equation}\label{eq:diff_unorm}
\begin{aligned}
& \quad \norm{u^{\tau+\Delta\tau} - u^\tau}_{L^2}^2 \\
& = \inttx{ \parentheses{\alpha_a \int_\tau^{\tau+\Delta\tau} \rho^{\tau'}(t,x) \abs{\nbu G\parentheses{t,x,u^{\tau'}(t,x), -\mG^{\tau'}(t,x)}} \,\rd \tau'}^2 } \\
& \le \alpha_a^2 \rho_1 \inttx{ \Delta\tau \int_\tau^{\tau+\Delta\tau} \rho^{\tau'}(t,x) \abs{\nbu G\parentheses{t,x,u^{\tau'}(t,x), -\mG^{\tau'}(t,x)}}^2 \,\rd \tau'}, 
\end{aligned}
\end{equation}
where we used Proposition \ref{prop:rho} and Cauchy Schwartz inequality. Substituting \eqref{eq:diff_unorm} into \eqref{eq:dVdtau}, we obtain
\begin{equation}\label{eq:dVdtau2}
\begin{aligned}
& \quad \int_\mX\parentheses{\dfrac{\rd}{\rd \tau} V_{u^\tau}(0,x)}^2 \rd x \\
& \le  C_2 \liminf_{\Delta\tau \to 0} \dfrac{1}{\Delta\tau} \alpha_a^2 \rho_1 \inttx{ \int_\tau^{\tau+\Delta\tau} \rho^{\tau'}(t,x) \abs{\nbu G\parentheses{t,x,u^{\tau'}(t,x), -\mG^{\tau'}(t,x)}}^2 \,\rd \tau'} \\
& = C_2 \liminf_{\Delta\tau \to 0} \dfrac{1}{\Delta\tau} \alpha_a^2 \rho_1 \int_\tau^{\tau+\Delta\tau} \inttx{ \rho^{\tau'}(t,x) \abs{\nbu G\parentheses{t,x,u^{\tau'}(t,x), -\mG^{\tau'}(t,x)}}^2 } \,\rd \tau' \\
& = C_2 \alpha_a^2 \rho_1 \inttx{\rho^{\tau}(t,x) \abs{\nbu G\parentheses{t,x,u^\tau(t,x), -\mG^\tau(t,x)}}^2 }.
\end{aligned}
\end{equation}
Combining \eqref{eq:dL0_dtau} and \eqref{eq:dVdtau2}, we recover 
\begin{equation}\label{eq:critic_improvement0}
\dfrac{\rd}{\rd \tau} \mL_0^\tau \le - \alpha_c \mL_0^\tau + \dfrac{\alpha_a^2}{\alpha_c} \dfrac{C_2  \rho_1}{2} \inttx{ \rho^{\tau}(t,x) \abs{\nbu G\parentheses{t,x,u^{\tau'}(t,x), -\mG^\tau(t,x)}}^2 }.
\end{equation}

Next, we consider $\mL_1^\tau$. Taking derivative w.r.t. $\tau$, we obtain
\begin{equation}\label{eq:dL1_dtau}
\begin{aligned}
& \quad \dfrac{\rd}{\rd \tau} \mL_1^\tau = \frac12 \dfrac{\rd}{\rd \tau} \inttx{ \rho^\tau(t,x) \abs{\sigma(x)\tp \parentheses{\mG^\tau(t,x) - \nx V_{u^\tau}(t,x)}}^2 } \\
& = \frac12 \inttx{ \dfrac{\rd}{\rd \tau} \rho^\tau(t,x) \abs{\sigma(x)\tp \parentheses{\mG^\tau(t,x) - \nx V_{u^\tau}(t,x)}}^2 }\\
& \quad + \inttx{ \rho^\tau(t,x) \parentheses{\mG^\tau(t,x) - \nx V_{u^\tau}(t,x)}\tp \sigma(x)\sigma(x)\tp \dfrac{\rd}{\rd \tau} \mG^\tau(t,x) \,} \\
& \quad + \inttx{ \rho^\tau(t,x) \parentheses{\nx V_{u^\tau}(t,x) - \mG^\tau(t,x)}\tp \sigma(x)\sigma(x)\tp \dfrac{\rd}{\rd \tau} \nx V_{u^\tau}(t,x) \,}\\
& =: (\rom{1}) + (\rom{2}) + (\rom{3}).
\end{aligned}
\end{equation}
We analyze these three terms next. The first term satisfies
\begin{equation}\label{eq:L1_term1}
(\rom{1}) \le \frac12 C_5 \alpha_a \inttx{ \abs{\sigma(x)\tp \parentheses{\mG^\tau(t,x) - \nx V_{u^\tau}(t,x)}}^2 } \le \dfrac{\alpha_a C_5}{\rho_0} \mL_1^\tau,
\end{equation}
where we used Lemma \ref{lem:rho_speed} and Proposition \ref{prop:rho} in the two inequalities.

The second term
\begin{equation}\label{eq:L1_term2}
\begin{aligned}
(\rom{2}) & = - \alpha_c \int_0^T \int_\mX \rho^\tau(t,x) \parentheses{\mG^\tau(t,x) - \nx V_{u^\tau}(t,x)}\tp \sigma(x) \sigma(x)\tp\\
& \hspace{1in} \rho^\tau(t,x) \,\sigma(x) \sigma(x)\tp \parentheses{\mG^\tau(t,x) - \nx V_{u^\tau}(t,x)} \, \rd x \, \rd t\\
& \le - 2\alpha_c\, \rho_0 \,\sigma_0 \inttx{ \rho^\tau(t,x) \abs{\sigma(x)\tp \parentheses{\mG^\tau(t,x) - \nx V_{u^\tau}(t,x)}}^2 \,}\\
& = - 4\alpha_c\, \rho_0\, \sigma_0 \,\mL_1^\tau
\end{aligned}
\end{equation}
where we used the critic dynamic \eqref{eq:Gdynamic} in the first equality, and Proposition \ref{prop:rho} and uniform ellipticity in the inequality.

We estimate $(\rom{3})$ next. By Cauchy Schwartz inequality,
\begin{equation}\label{eq:critic_term3}
\begin{aligned}
(\rom{3}) & \le \frac12 \alpha_c \, \rho_0 \, \sigma_0 \inttx{ \rho^\tau(t,x) \abs{\sigma(x)\tp \parentheses{\nx V_{u^\tau}(t,x) - \mG^\tau(t,x)} }^2 \,} \\
& \quad + \dfrac{1}{2 \alpha_c \, \rho_0 \, \sigma_0} \inttx{ \rho^\tau(t,x) \abs{\sigma(x)\tp \dfrac{\rd}{\rd \tau} \nx V_{u^\tau}(t,x) }^2 \,} \\
& = \alpha_c\, \rho_0\, \sigma_0 \,\mL_1^\tau + \dfrac{1}{2 \alpha_c \, \rho_0 \, \sigma_0} \inttx{ \rho^\tau(t,x) \abs{\sigma(x)\tp \dfrac{\rd}{\rd \tau} \nx V_{u^\tau}(t,x) }^2 \,}.
\end{aligned}
\end{equation}
For the last term in \eqref{eq:critic_term3}, we have 
\begin{equation}\label{eq:term3_temp}
\begin{aligned}
& \quad \inttx{ \rho^\tau(t,x) \abs{\sigma(x)\tp \dfrac{\rd}{\rd \tau} \nx V_{u^\tau}(t,x) }^2 \,} \\
& \le \rho_1 K^2 \inttx{ \abs{ \dfrac{\rd}{\rd \tau} \nx V_{u^\tau}(t,x) }^2 \,} \\
& = \rho_1 K^2 \inttx{ \abs{ \lim_{\Delta\tau \to 0} \dfrac{1}{\Delta \tau} \parentheses{\nx V_{u^{\tau+\Delta\tau}}(t,x) - \nx V_{u^\tau}(t,x)}  }^2 \,} \\
& \le \rho_1 K^2 \liminf_{\Delta\tau \to 0} \dfrac{1}{\Delta \tau^2} \inttx{ \abs{ \nx V_{u^{\tau+\Delta\tau}}(t,x) - \nx V_{u^\tau}(t,x) }^2 \,} \\
& \le \rho_1 K^2 \liminf_{\Delta\tau \to 0} \dfrac{C_2^2}{\Delta \tau^2} \norm{u^{\tau + \Delta\tau} - u^\tau}_{L^2}^2,
\end{aligned}
\end{equation}
where we have consecutively used: upper bound of $\rho(t,x)$ in Proposition \ref{prop:rho} and $\sigma(x)$ in Assumption \ref{assump:basic}; definition of derivative; Fatou's lemma; Lemma \ref{lem:regularity_Vu}. Substituting \eqref{eq:diff_unorm} into \eqref{eq:term3_temp}, we obtain
\begin{equation}\label{eq:term3_temp3}
\begin{aligned}
& \quad \inttx{ \rho^\tau(t,x) \abs{\sigma(x)\tp \dfrac{\rd}{\rd \tau} \nx V_{u^\tau}(t,x) }^2 \,} \\
& \le \rho_1^2 K^2 C_2^2 \liminf_{\Delta\tau \to 0} \dfrac{\alpha_a^2}{\Delta\tau} \inttx{\int_\tau^{\tau + \Delta\tau} \rho^{\tau'}(t,x) \abs{\nbu G\parentheses{t,x,u^{\tau'}(t,x), -\mG^{\tau'}(t,x)}}^2 \rd\tau'}\\
& = \rho_1^2 K^2 C_2^2 \liminf_{\Delta\tau \to 0} \dfrac{\alpha_a^2}{\Delta\tau} \int_\tau^{\tau + \Delta\tau} \inttx{\rho^{\tau'}(t,x) \abs{\nbu G\parentheses{t,x,u^{\tau'}(t,x), -\mG^{\tau'}(t,x)}}^2} \,\rd\tau'\\
& = \rho_1^2 K^2 C_2^2 \alpha_a^2 \inttx{\rho^{\tau}(t,x) \abs{\nbu G\parentheses{t,x,u^{\tau}(t,x), -\mG^{\tau}(t,x)}}^2}
\end{aligned}
\end{equation}
Combining \eqref{eq:term3_temp3} with \eqref{eq:critic_term3}, we get
\begin{equation}\label{eq:L1_term3}
\begin{aligned}
& \quad (\rom{3})  \\
& \le  \alpha_c\, \rho_0\, \sigma_0 \,\mL_1^\tau + \dfrac{1}{2 \alpha_c \, \rho_0 \, \sigma_0} \inttx{ \rho^\tau(t,x) \abs{\sigma(x)\tp \dfrac{\rd}{\rd \tau} \nx V_{u^\tau}(t,x) }^2 \,} \\
& \le \alpha_c\, \rho_0\, \sigma_0 \,\mL_1^\tau + \dfrac{\alpha_a^2 \rho_1^2 K^2 C_2^2}{2 \alpha_c \, \rho_0 \, \sigma_0} \inttx{\rho^{\tau}(t,x) \abs{\nbu G\parentheses{t,x,u^{\tau}(t,x), -\mG^{\tau}(t,x)}}^2}.
\end{aligned}
\end{equation}
Combining the estimation for $(\rom{1})-(\rom{3})$ in \eqref{eq:L1_term1}, \eqref{eq:L1_term2}, and \eqref{eq:L1_term3} into \eqref{eq:dL1_dtau}, we get 
\begin{equation}\label{eq:critic_improvement1}
\begin{aligned}
& \quad \dfrac{\rd}{\rd \tau} \mL_1^\tau \\
& \le \dfrac{\alpha_a C_5}{\rho_0} \mL_1^\tau - 4\alpha_c\, \rho_0\, \sigma_0 \,\mL_1^\tau + \alpha_c\, \rho_0\, \sigma_0 \,\mL_1^\tau \\
& \quad + \dfrac{\alpha_a^2 \rho_1^2 K^2 C_2^2}{2 \alpha_c \, \rho_0 \, \sigma_0} \inttx{\rho^{\tau}(t,x) \abs{\nbu G\parentheses{t,x,u^{\tau}(t,x), -\mG^{\tau}(t,x)}}^2}\\
& \le - 2\alpha_c\, \rho_0\, \sigma_0 \,\mL_1^\tau + \dfrac{\alpha_a^2}{\alpha_c} \dfrac{\rho_1^2 K^2 C_2^2}{2 \, \rho_0 \, \sigma_0} \inttx{\rho^{\tau}(t,x) \abs{\nbu G\parentheses{t,x,u^{\tau}(t,x), -\mG^{\tau}(t,x)}}^2},
\end{aligned}
\end{equation}
where we used \eqref{eq:speed_ratio} in the last inequality. Finally, combining \eqref{eq:critic_improvement0} and \eqref{eq:critic_improvement1}, we recover \eqref{eq:critic_improvement}.
\end{proof}

\begin{proof}[Proof for theorem \ref{thm:actor_improvement}]
By direct computation, we have
\begin{equation*}
\begin{aligned}
& \quad \dfrac{\rd}{\rd \tau} J[u^{\tau}] = \inner{\fd{J}{u}[u^{\tau}]}{\dfrac{\rd}{\rd \tau} u^{\tau}}_{L^2} \\
& = - \alpha_a \inttx{ \rho^{u^\tau}(t,x)^2 \inner{\nbu G\parentheses{t,x,u^\tau(t,x),-\nx V_{u^\tau}(t,x)}}{\nbu G\parentheses{t,x,u^\tau(t,x),-\mG^\tau(t,x)}}} \\
& = - \frac12 \alpha_a \inttx{\rho^{u^\tau}(t,x)^2 \abs{\nbu G\parentheses{t,x,u^\tau(t,x),-\nx V_{u^\tau}(t,x)}}^2} \\
& \quad - \frac12 \alpha_a \inttx{\rho^{u^\tau}(t,x)^2 \abs{\nbu G\parentheses{t,x,u^\tau(t,x),-\mG^\tau(t,x)}}^2} \\
& \quad + \dfrac{\alpha_a}{2} \inttx{\rho^{u^\tau}(t,x)^2 \abs{\nbu G\parentheses{t,x,u^\tau(t,x),-\nx V_{u^\tau}(t,x)} - \nbu G\parentheses{t,x,u^\tau(t,x),-\mG^\tau(t,x)}}^2}.
\end{aligned}
\end{equation*}
Therefore, we have
\begin{equation}\label{eq:chain_rule}
\begin{aligned}
& \quad \dfrac{\rd}{\rd \tau} J[u^{\tau}]\\
& \le - \frac12 \alpha_a \rho_0^2 \inttx{\abs{\nbu G\parentheses{t,x,u^\tau(t,x),-\nx V_{u^\tau}(t,x)}}^2} \\
& \quad - \frac12 \alpha_a \rho_0 \inttx{\rho^\tau(t,x) \abs{\nbu G\parentheses{t,x,u^\tau(t,x),-\mG^\tau(t,x)}}^2} \\
& \quad + \dfrac{\alpha_a}{2} \rho_1^2 \inttx{\abs{\nbu G\parentheses{t,x,u^\tau(t,x),-\nx V_{u^\tau}(t,x)} - \nbu G\parentheses{t,x,u^\tau(t,x),-\mG^\tau(t,x)}}^2}
\end{aligned}
\end{equation}
where we have used Proposition \ref{prop:rho}. Note that the last term in \eqref{eq:chain_rule} satisfies
\begin{equation}
\begin{aligned}
& \quad \inttx{\abs{\nbu G\parentheses{t,x,u^\tau(t,x),-\nx V_{u^\tau}(t,x)} - \nbu G\parentheses{t,x,u^\tau(t,x),-\mG^\tau(t,x)}}^2} \\
& = \inttx{\abs{\nbu b(x,u^\tau(t,x)) \parentheses{\nx V_{u^\tau}(t,x) - \mG^\tau(t,x)}}^2} \\
& \le K^2 \norm{\nx V - \mG}_{L^2}^2.
\end{aligned}
\end{equation}
This term comes from the critic error. Therefore, by \eqref{eq:chain_rule}, in order to prove \eqref{eq:actor_improvement}, is it sufficient to show
\begin{equation}\label{eq:actor_rate}
\inttx{\abs{\nbu G\parentheses{t,x,u^\tau(t,x),-\nx V_{u^\tau}(t,x)}}^2} \ge c \parentheses{J[u^\tau] - J[u^*]}
\end{equation}
for some constant $c>0$. This is very similar to the Polyak-{\L}ojasiewicz (PL) condition \citep{karimi2016linear}, which commonly appears in theoretical analysis for optimization. In order to seek for this condition, we make a technical definition. For any control function $u \in \mU$, we define its corresponding local optimal control function by
\begin{equation}\label{eq:udiamond}
u^{\diamond}(t,x) := \argmax_{u' \in \RR^{n'}} G(t,x,u',-\nx V_u(t,x), -\nx^2 V_u(t,x)).
\end{equation}
Since $G$ is strongly concave in $u$, $u^{\diamond}$ is well-defined.
Since the solution to the HJB equation is unique, $u \equiv u^{\diamond}$ if and only if $u$ is the optimal control.
By $\mu_G$-strong concavity of $G$ in $u$, we have
\begin{equation}\label{eq:strong_concave}
\begin{aligned}
& \quad \abs{\nbu G\parentheses{t,x,u(t,x),-\nx V_u(t,x)}} \\
& = \abs{\nbu G\parentheses{t,x,u(t,x),-\nx V_u(t,x)} - \nbu G\parentheses{t,x,u^{\diamond}(t,x),-\nx V_u(t,x)}} \\
& \ge \mu_G \abs{u(t,x) - u^{\diamond}(t,x)}.
\end{aligned}
\end{equation} 
With this definition, we state a crucial criterion for the PL condition: there exists positive constants $\mu_0$ and $\tau_0$ such that
\begin{equation}\label{eq:easy_case}
\norm{u^{\tau} - u^{\tau \diamond}}_{L^2} \ge \mu_0 \norm{u^{\tau} - u^*}_{L^2}
\end{equation}
for all $\tau \ge \tau_0$.
Under such condition, we have
\begin{equation}\label{eq:Polyak}
\begin{aligned}
& \quad \norm{\nbu G\parentheses{t,x,u^{\tau}(t,x),-\nx V_{u^\tau}(t,x)}}_{L^2}^2 \ge \mu_G^2 \norm{u^{\tau} - u^{\tau \diamond}}_{L^2}^2 \\
& \ge \, \mu_G^2 \, \mu_0^2 \norm{u^{\tau} - u^*}_{L^2}^2 \ge \mu_G^2 \, \mu_0^2 \frac{1}{C_3} \parentheses{J[u^{\tau}]- J[u^*]}
\end{aligned}
\end{equation}
when $\tau \ge \tau_0$, where we have consecutively used: estimate \eqref{eq:strong_concave}, condition \eqref{eq:easy_case}, and Lemma \ref{lem:J_quadratic}. This implies \eqref{eq:actor_rate} holds with $c= \mu_G^2 \, \mu_0^2 / C_3$, so the remaining task is to show \eqref{eq:easy_case}.

We will establish \eqref{eq:easy_case} by contradiction. If it does not hold, then there exists a sequence $\{\tau_k\}_{k=1}^\infty$, such that $\tau_k < \tau_{k+1}$, $\lim_{k \to \infty} \tau_k=\infty$, and
\begin{equation*}
\norm{u^{\tau_k} - u^{\tau_k\diamond}}_{L^2} \le \frac1k \norm{u^{\tau_k} - u^*}_{L^2}.
\end{equation*}
For simplicity, we denote $u^{\tau_k}$ by $u_k$ and the corresponding value function $V_{u^{\tau_k}}$ by $V_k$.
The above condition becomes
\begin{equation}\label{eq:hard_case}
\norm{u_k - u_k^{\diamond}}_{L^2} \le \frac1k \norm{u_k - u^*}_{L^2}.
\end{equation}
We denote
\begin{equation}\label{eq:V_infinity}
V_{\infty}(t,x) := \limsup_{k \to \infty} V_k(t,x).
\end{equation}
We claim that 
\begin{equation}\label{eq:claim}
V_{\infty}(t,x) \equiv V^*(t,x).
\end{equation}
The proof for this claim contains the most technical part, so we leave it to Lemma \ref{lem:claim} (which lies exactly after this proof) and focus on the rest part of the analysis first. The intuition for this lemma is that under \eqref{eq:hard_case}, when $k$ is large, $V_k$ is very close to the solution of the HJB equation in the sense that the maximal condition \eqref{eq:max1} is nearly satisfied (cf. \eqref{eq:udiamond}). Therefore, the idea for Lemma \ref{lem:claim} is to modify the proof for the uniqueness of the viscosity solution to the HJB equation and show $V_\infty$, as the limit of $V_k$, is the solution to the HJB equation.

Now, provided that the claim \eqref{eq:claim} holds, we know that
$$\limsup_{k \to \infty} V_k(t,x) = V^*(t,x) \le \liminf_{k \to \infty} V_k(t,x) ~~~~~~ \forall t,x,$$
where the last inequality is because of the optimality of $V^*$. Therefore,
$$\lim_{k \to \infty} V_k(t,x) = V^*(t,x) ~~~~~~ \forall t,x.$$
By the Lipschitz condition of the value function implied from Assumption \ref{assump:u_smooth} (as explained at the beginning of the appendix) and Arzel\'a--Ascoli theorem, we know that $V_k$ converges to $V^*$ uniformly. Therefore, using the definition
$$J[u_k] = \int_{\mX} \rho^{u_k}(0,x) V_k(0,x) \,\rd x = \int_{\mX} V_k(0,x) \,\rd x$$
and
$$J[u^*] = \int_{\mX} V^*(0,x)\, \rd x,$$
we know that 
$$\lim_{k \to \infty} J[u_k] = J[u^*].$$
By Assumption \ref{assump:actor_rate}, we further obtain
\begin{equation}\label{eq:uk_converge}
\lim_{k \to \infty} \norm{u_k - u^*}_{L^2} = 0.
\end{equation}

Next, we recall that we define the local optimal control function by \eqref{eq:udiamond}
\begin{equation*}
u^{\diamond}(t,x) := \argmax_{u' \in \RR^{n'}} G(t,x,u',-\nx V_u(t,x), -\nx^2 V_u(t,x)).
\end{equation*}
For fixed $(t,x)$, $u^{\diamond}(t,x)$ can be viewed as an implicit function of $-\nx V_u(t,x)$, given by the equation
$$\nbu G\parentheses{t,x,u^{\diamond}(t,x),-\nx V_u(t,x)} = 0.$$
Next, we want to show that this implicit function is Lipschitz. Computing the Jacobian of $\nbu G(t,x,u^{\diamond},p) = 0$ w.r.t. $p \in \RR^n$, we obtain
\begin{equation*}
0 = \nbu^2 G(t,x,u^{\diamond},p) \cdot \pd{u^{\diamond}}{p}\tp + \nbu b(x,u).
\end{equation*}
So
\begin{equation}\label{eq:Jacobian}
\pd{u^{\diamond}}{p}\tp = - \parentheses{\nbu^2 G(t,x,u^{\diamond},p)}^{-1} \nbu b(x,u).
\end{equation}
By assumption \ref{assump:basic}, $\abs{\nbu b(x,u)} \le K$. Since $G$ is $\mu_G$-strongly concave in $u$, $-(\nbu^2 G)^{-1}$ is positive definite with spectrum norm less than $1/\mu_G$. Therefore, \eqref{eq:Jacobian} implies
$$\abs{\pd{u^{\diamond}}{p}} \le \dfrac{K}{\mu_G}.$$
So the implicit function $p \mapsto u^\diamond$ induced by $\nbu G(t,x,u^\diamond,p)=0$ is Lipschitz. Therefore,
$$\abs{u_k^{\diamond}(t,x) - u^*(t,x)} \le \dfrac{K}{\mu_G} \abs{\nx V_k(t,x) - \nx V^*(t,x)},$$
hence
$$\norm{u_k^{\diamond} - u^*}_{L^2} \le \dfrac{K}{\mu_G}\norm{\nx V_k - \nx V^*}_{L^2} \le \dfrac{K}{\mu_G} C_4 \norm{u_k-u^*}_{L^2}^{1+\alpha}$$
where we used Lemma \ref{lem:quadratic_Vu} in the last inequality. Using triangle inequality, we get
$$\norm{u_k-u^*}_{L^2} \le \norm{u_k - u_k^{\diamond}}_{L^2} + \norm{u_k^{\diamond} - u^*}_{L^2} \le \frac1k \norm{u_k-u^*}_{L^2} + C\norm{u_k-u^*}_{L^2}^{1+\alpha}.$$
This gives a contradiction when $k$ is sufficiently large (which means $\norm{u_k-u^*}_{L^2}$ is sufficiently small because of \eqref{eq:uk_converge}) unless $u_k = u^*$. This contradiction comes from assumption \eqref{eq:hard_case}, which is the negation of \eqref{eq:easy_case}. Therefore, \eqref{eq:easy_case} must hold, and the theorem is proved.
\end{proof}

\begin{lem}[claim \eqref{eq:claim}]\label{lem:claim}
Let \eqref{eq:hard_case} and all the assumptions in Theorem \ref{thm:actor_improvement} hold. Then, the claim \eqref{eq:claim} holds, i.e.
$$\limsup_{k \to \infty} V_k(t,x) =: V_{\infty}(t,x) \equiv V^*(t,x).$$
\end{lem}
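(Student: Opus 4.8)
The plan is to show that the whole sequence $\{V_k\}$ converges to $V^*$, from which $V_\infty=\limsup_k V_k=V^*$ follows at once. The starting observation is compactness: under Assumption \ref{assump:u_smooth} (together with the Schauder regularity noted at the beginning of the appendix), the value functions $V_k=V_{u^{\tau_k}}$ are uniformly bounded in $C^{2,4}([0,T];\mX)$, so the family is precompact in $C^{1,2}$. Hence it suffices to prove that \emph{every} $C^{1,2}$-subsequential limit $\tilde V$ of $\{V_k\}$ solves the HJB equation \eqref{eq:HJB}: once this is established, uniqueness of the HJB solution forces every such limit to equal $V^*$, and a precompact sequence all of whose subsequential limits coincide converges (in $C^{1,2}$, hence pointwise) to that common value.

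The engine of the argument is the smallness of $u_k-u_k^\diamond$. Since $u_k,u^*\in\mU$ are uniformly bounded, $\norm{u_k-u^*}_{L^2}$ is bounded, so \eqref{eq:hard_case} gives $\norm{u_k-u_k^\diamond}_{L^2}\to0$. Fix a subsequence with $V_{k_j}\to\tilde V$ in $C^{1,2}$, so that $\nx V_{k_j}$ and $\nx^2 V_{k_j}$ converge uniformly. Because $\sigma$ does not depend on $u$, the maximizer $u^\diamond$ in \eqref{eq:udiamond} depends only on $p=-\nx V_u$ (not on $P$), and this dependence is Lipschitz by the implicit-function computation \eqref{eq:Jacobian}; therefore $u_{k_j}^\diamond$ converges uniformly to the maximizer $\tilde u^\diamond$ associated with $\tilde V$, and consequently $\sup_{u'}G(t,x,u',-\nx V_{k_j},-\nx^2V_{k_j})=G(t,x,u_{k_j}^\diamond,\cdots)$ converges uniformly to $\sup_{u'}G(t,x,u',-\nx\tilde V,-\nx^2\tilde V)$.

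Now I would pass to the limit in the HJ equation \eqref{eq:HJ}. Using \eqref{eq:HJ} to replace $\partial_t V_{k_j}$, I rewrite
\[
-\partial_t V_{k_j}+\sup_{u'}G(t,x,u',-\nx V_{k_j},-\nx^2 V_{k_j})=R_{k_j},\qquad R_{k_j}:=G(t,x,u_{k_j}^\diamond,\cdots)-G(t,x,u_{k_j},\cdots),
\]
so that $\abs{R_{k_j}}\le C\abs{u_{k_j}^\diamond-u_{k_j}}$ and hence $\norm{R_{k_j}}_{L^2}\to0$. The left-hand side converges \emph{uniformly} to the continuous function $F:=-\partial_t\tilde V+\sup_{u'}G(t,x,u',-\nx\tilde V,-\nx^2\tilde V)$, while the right-hand side converges to $0$ in $L^2$; since uniform convergence on the bounded domain implies $L^2$ convergence to the same limit, $F\equiv0$, i.e.\ $\tilde V$ is a classical solution of the HJB equation. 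The terminal condition passes to the limit since $V_{k_j}(T,\cdot)=g=V^*(T,\cdot)$. By uniqueness $\tilde V=V^*$, which completes the subsequential-limit step and therefore the proof.

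\textbf{Main obstacle.} The delicate point is that \eqref{eq:hard_case} only controls $u_k-u_k^\diamond$ in $L^2$, not uniformly, so the residual $R_{k_j}$ is merely $L^2$-small and one cannot pass to the limit in the nonlinear term pointwise in the naive way. The resolution is to isolate all the \emph{structural} terms of the equation---$\partial_t V_{k_j}$ and the Hamiltonian $\sup_{u'}G$ evaluated at the converging gradients and Hessians---and to show these converge uniformly (which in turn requires the uniform convergence of the maximizers $u_{k_j}^\diamond$, via the Lipschitz argmax estimate \eqref{eq:Jacobian}); the mismatch between this uniform limit and the vanishing $L^2$ residual is exactly what pins the limiting equation to be HJB. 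Verifying the uniform convergence of $u_{k_j}^\diamond$ and the $C^{1,2}$-precompactness of $\{V_k\}$ are the technical prerequisites that make this reconciliation legitimate.
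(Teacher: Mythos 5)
Your proposal is correct, but it takes a genuinely different route from the paper. The paper proves Lemma~\ref{lem:claim} by a doubling-of-variables comparison argument in the spirit of uniqueness proofs for HJB equations: it penalizes with the test function \eqref{eq:varphi}, adds a linear perturbation $(q,p,\hq,\hp)$ so that the implicit function theorem (with Jacobian $A_k$ in \eqref{eq:Ak}) lets the perturbed maximum point sweep a whole box, and then \emph{integrates} the resulting pointwise inequality \eqref{eq:ineq2} over that box --- this integration is exactly how the paper reconciles the pointwise term $L\abs{u_k-u_k^{\diamond}}$ with the merely $L^2$/$L^1$ control supplied by \eqref{eq:hard_case}, the same obstacle you flag. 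You instead resolve that obstacle by compactness and stability: the uniform $C^{2,4}$ bounds (asserted in the appendix preamble for all $u\in\mU$) give $C^{1,2}$-precompactness, the HJ equation \eqref{eq:HJ} rewritten with the residual $R_{k_j}$ passes to the limit because the structural terms converge uniformly while $\norm{R_{k_j}}_{L^2}\to0$, and the standing assumption of Section~\ref{sec:theory} that the HJB equation \eqref{eq:HJB} has a unique $C^{1,2}$ solution then pins the limit to $V^*$. Each approach buys something: yours is far shorter and more transparent, but it consumes the assumed classical uniqueness of the HJB solution, whereas the paper's argument re-derives the needed one-sided comparison from scratch (it only uses $V^*\le V_k$ from optimality) and is, at least in structure, the kind of proof one would try to push toward weaker solution concepts --- though the paper itself notes that estimate \eqref{eq:diff_d2Phi} blocks a genuine viscosity-solution generalization, so both proofs ultimately live on the same strong regularity. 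Two small bookkeeping points in your write-up deserve a sentence each if you flesh it out: (i) $C^{1,2}$-precompactness requires joint $(t,x)$-equicontinuity of $\partial_t V_k$ and $\nx^2 V_k$, i.e.\ the mixed-derivative content of the $C^{2,4}$ bound --- legitimate here, since the paper itself uses joint Lipschitz continuity of $\nabla^2 V_k$ in \eqref{eq:diff_d2Phi}; and (ii) boundedness of $\norm{u_k-u^*}_{L^2}$ (so that \eqref{eq:hard_case} forces $\norm{u_k-u_k^{\diamond}}_{L^2}\to0$) follows from $u_k,u^*\in\mU$ on the compact domain, and the Lipschitz dependence of the maximizer in \eqref{eq:udiamond} on $p$ via \eqref{eq:Jacobian} indeed needs the uniform strong concavity hypothesis of Theorem~\ref{thm:actor_improvement}, which the lemma inherits.
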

\begin{proof}
We will prove \eqref{eq:claim} by contradiction. We assume to the contrary that there exists $\btx \in [0,T] \times \mX$ s.t. $V_{\infty}\btx - V^*\btx \ge 2\eta >0$.

First, we claim that we can find $(\bt,\bx) \in (0,T] \times \mX$. This is because otherwise $\bt=0$ and $V_\infty(t,x) = V^*(t,x)$ for all $t>0$ and $x\in\mX$, then we can pick $\delta t = \eta / (3L)$ and obtain
\begin{equation}\label{eq:exclude0}
\begin{aligned}
& \quad \abs{V_\infty(0,\bx) - V^*(0,\bx)} \\
& \le \abs{V_\infty(0,\bx) - V_k(0,\bx)} + \abs{V_k(0,\bx) - V_k(\delta t,\bx)} \\
& \quad + \abs{V_k(\delta t,\bx) - V^*(\delta t,\bx)} + \abs{V^*(\delta t,\bx) - V^*(0,\bx)} 
\end{aligned}
\end{equation}
for any $k$. We bound each of the term in \eqref{eq:exclude0} next. The first term is bounded by $\frac12 \eta$ because
$$\limsup_{k \to \infty} V_k(0 ,\bx) = V_\infty (0 ,\bx),$$
and for any $K_0>0$ we can find $k>K_0$ s.t. $\abs{V_k(0 ,\bx) - V_\infty (0 ,\bx)} < \frac12 \eta$. The second and forth terms are bounded by $L \,\delta t$ because of the Lipschitz condition induced by Assumption \ref{assump:basic}. For the third term, since
$$\limsup_{k \to \infty} V_k(t,x) = V_\infty (t,x) = V^*(t,x) \le \liminf_{k \to \infty} V_k(t,x)$$
in $[\frac12 \delta t, T] \times \mX$, we have $V_k$ converges to $V_\infty$ pointwise (and hence uniformly by Arzel\'a--Ascoli theorem) in $[\frac12 \delta t, T] \times \mX$. So the third term $\abs{V_k(\delta t,\bx) - V^*(\delta t,\bx)}$ is less than $\frac12 \eta$ as long as $k$ is sufficiently large. Combining the estimations for the four terms in \eqref{eq:exclude0}, we obtain
$$\abs{V_\infty(0,\bx) - V^*(0,\bx)} < \frac12 \eta + L \,\delta t + \frac12 \eta + L \,\delta t < 2 \eta,$$
which contradicts to $V_\infty(0,\bx) - V^*(0,\bx) \ge 2\eta$. Therefore, we can assume that $\bt > 0$.

Without loss of generality, we assume that
\begin{equation}\label{eq:contrary}
V_k\btx - V^*\btx \ge \eta >0 ~~~ \forall k.
\end{equation}
If necessary, we can extract a subsequence (without changing notations) to achieve \eqref{eq:contrary}, where \eqref{eq:hard_case} is still satisfied. Using the same trick, we may assume that
\begin{equation}\label{eq:limit_equal}
\lim_{k \to \infty} V_{k}\btx = V_\infty\btx.
\end{equation}

Next, for any $\ve,\,\delta,\,\lambda \in (0,1)$, we define two continuous functions on $(0,T] \times \mX \times (0,T] \times \mX$
\begin{equation}\label{eq:varphi}
\varphi(t,x,s,y) := \dfrac{1}{2\ve} \abs{t-s}^2 + \dfrac{1}{2\delta}\abs{x-y}^2 + \dfrac{\lambda}{t} + \dfrac{\lambda}{s}
\end{equation}
and
\begin{equation}\label{eq:Phi_k}
\Phi_k(t,x,s,y) := V_k(t,x) - V^*(s,y) - \varphi(t,x,s,y).
\end{equation}
Since $\lim_{t\wedge s \to 0+} \Phi_k(t,x,s,y) = - \infty$, and $\Phi_k$ is continuous, $\Phi_k(t,x,s,y)$ achieves its maximum at some point $(t_k, x_k, s_k, y_k) \in (0,T] \times \mX \times (0,T] \times \mX$. This $(t_k, x_k, s_k, y_k)$ depends on $\ve,\,\delta,\,\lambda$, and $k$. By this optimality, we have
$$2\Phi_k(t_k, x_k, s_k, y_k) \ge \Phi_k(t_k, x_k, t_k, x_k) + \Phi_k(s_k, y_k, s_k, y_k),$$
which simplifies to
\begin{equation*}
\begin{aligned}
\dfrac{1}{\ve} \abs{t_k-s_k}^2 + \dfrac{1}{\delta}\abs{x_k-y_k}^2 &\le V_k(t_k,x_k) - V_k(s_k,y_k) + V^*(t_k,x_k) - V^*(s_k,y_k) \\
&\le 2L \abs{(t_k,x_k) - (s_k,y_k)},
\end{aligned}
\end{equation*}
where we used the Lipschitz condition of $V_k$ and $V^*$ in the second inequality. So,
\begin{equation*}
\frac{1}{\ve + \delta}\abs{(t_k,x_k) - (s_k,y_k)}^2 = \frac{1}{\ve + \delta} \parentheses{\abs{t_k-s_k}^2 + \abs{x_k-y_k}^2} \le 2L \abs{(t_k,x_k) - (s_k,y_k)}.
\end{equation*}
This implies
\begin{equation}\label{eq:bound1}
\abs{(t_k,x_k) - (s_k,y_k)} \le 2L(\ve + \delta)
\end{equation}
and
\begin{equation}\label{eq:bound2}
\dfrac{1}{\ve} \abs{t_k-s_k}^2 + \dfrac{1}{\delta}\abs{x_k-y_k}^2 \le 4L^2(\ve + \delta).
\end{equation}
Also, $\abs{t_k-s_k},\,\abs{x_k-y_k} \to 0$ as $\ve,\,\delta \to 0$.

Using the optimality of $(t_k, x_k, s_k, y_k)$, we also have
\begin{align*}
& \quad V_k\btx - V^*\btx - \varphi(\bt,\bx,\bt,\bx) = \Phi_k(\bt,\bx,\bt,\bx)\\
& \le \Phi_k(t_k, x_k, s_k, y_k) = V_k(t_k, x_k) - V^*(s_k,y_k) - \varphi(t_k, x_k, s_k, y_k),
\end{align*}
which implies
\begin{equation}\label{eq:ineq1}
\quad V_k\btx - V^*\btx - \dfrac{2\lambda}{\bt}
\le V_k(t_k, x_k) - V^*(s_k,y_k) - \dfrac{\lambda}{t_k} - \dfrac{\lambda}{s_k}.
\end{equation}

Next, we separate into two cases. When $t_k$ or $s_k$ are close to $T$, we use the fact that $V_{\infty}(T,\cdot) = V^*(T,\cdot) = g(\cdot)$ to derive a contradiction. Conversely, when $t_k$ and $s_k$ are not close to $T$, we use positivity of $\lambda$ to derive a contradiction.

\emph{Case 1.} For any $K_0$ and $\alpha_0>0$, we can find $k \ge K_0$ and $\ve,\delta,\lambda < \alpha_0$ s.t. $t_k \vee s_k \ge T - \frac{\eta}{3L}$. Under this assumption, we can find a sequence $\{k_i,\ve_i,\delta_i,\lambda_i\}_{i=1}^{\infty}$ s.t. $k_i$ increases to infinity, $(\ve_i,\delta_i,\lambda_i)$ decrease to $0$s, and the corresponding $t_{k_i}, x_{k_i}, s_{k_i}, y_{k_i}$ satisfies $t_{k_i} \vee s_{k_i} \ge T - \frac{\eta}{3L}$ for all $i$.
Since $[0,T] \times \mX$ is bounded, we can pick a subsequence of $\{k_i, \ve_i, \delta_i, \lambda_i\}_{i=1}^{\infty}$ (without changing notations) such that $t_{k_i}, x_{k_i}, s_{k_i}, y_{k_i}$ all converge. \eqref{eq:ineq1} becomes 
\begin{equation}\label{eq:ineq1i}
\quad V_{k_i}\btx - V^*\btx - \dfrac{2\lambda_i}{\bt}
\le V_{k_i}(t_{k_i}, x_{k_i}) - V^*(s_{k_i},y_{k_i}) - \dfrac{\lambda_i}{t_{k_i}} - \dfrac{\lambda_i}{s_{k_i}}.
\end{equation}

Let $i \to \infty$, then $V_{k_i}\btx \to V_\infty\btx$ by \eqref{eq:limit_equal} and $\lambda_i \to 0$. Also, \eqref{eq:bound1} implies $s_{k_i},t_{k_i}$ converge to some same limit $t_{\infty} \ge T - \frac{\eta}{3L}$ and $x_{k_i}, y_{k_i}$ converge to some same limit $x_{\infty}$. For the term $V_{k_i}(t_{k_i}, x_{k_i})$, we have
\begin{align*}
& \quad \limsup_{i \to \infty} V_{k_i}(t_{k_i}, x_{k_i}) \le \limsup_{i \to \infty} \sqbra{ V_{k_i}(t_\infty, x_\infty) + L \parentheses{\abs{t_{k_i} - t_\infty} + \abs{x_{k_i} - x_\infty}}} \\
& \le V_\infty(t_\infty, x_\infty) + L \cdot 0 =  V_\infty(t_\infty, x_\infty)
\end{align*}
Also, $\lim_{i\to\infty} V^*(s_{k_i},y_{k_i}) = V^*(t_\infty,x_\infty)$ because $V^*$ is Lipschitz continuous.
Combining the analysis above and let $i \to \infty$, \eqref{eq:ineq1i} becomes
\begin{equation*}
\begin{aligned}
& \quad V_{\infty}\btx - V^*\btx 
\le V_{\infty}(t_{\infty}, x_{\infty}) - V^*(t_{\infty}, x_{\infty}) \\
& = V_{\infty}(t_{\infty}, x_{\infty}) - g(x_{\infty}) + g(x_{\infty}) - V^*(t_{\infty}, x_{\infty}) \\
& = V_{\infty}(t_{\infty}, x_{\infty}) - V_{\infty}(T, x_{\infty}) + V^*(T, x_{\infty}) - V^*(t_{\infty}, x_{\infty}) \\
& \le L \abs{T - t_{\infty}} + L \abs{T - t_{\infty}} \le 2L \dfrac{\eta}{3L} = \dfrac{2}{3} \eta < \eta,
\end{aligned}
\end{equation*}
which contradicts to \eqref{eq:contrary}. This contradiction comes from the assumption $t_k \vee s_k \ge T - \frac{\eta}{3L}$ in \emph{Case 1}.

\emph{Case 2.} There exist $K_0$ and $\alpha_0>0$ s.t. for any $k\ge K_0$ and $\ve,\delta,\lambda < \alpha_0$, we have $t_k \vee s_k < T - \frac{\eta}{3L}$. In this second case, we will only focus on the situation when $k\ge K_0$ and $\ve,\delta,\lambda < \alpha_0$. Without loss of generality, we assume $K_0 \ge 1$ and $\alpha_0 \le 1$. We fix $\lambda < \alpha_0$ and let $k, \ve, \delta$ vary. Define $M := 4K+2T+2/\bt$ and $r_0 := \min\{ \frac{\lambda}{M (M+1)}, \frac{\eta}{6L} \}$.
Note that $\lambda$ is fixed, so $r_0$ is an absolute constant. We also define
$$Q_0 := \{ (t,x,s,y) ~|~ \lambda/M \le t,s \le T-2r_0 \}.$$
Then $\Phi_k$ achieves its maximum $(t_k, x_k, s_k, y_k)$ in $Q_0$ because \eqref{eq:ineq1} cannot hold if $t_k \le \lambda/M$ or $s_k \le \lambda/M$. Next, we define
\begin{equation}\label{eq:Q_set}
Q := \{ (t,x,s,y) ~|~ \lambda/(M+1) < t,s < T-r_0 \}.
\end{equation}
We find $Q_0 \subset Q$ and
\begin{equation*}
\min\{t_k, s_k\} - \dfrac{\lambda}{M+1} \ge \dfrac{\lambda}{M} - \dfrac{\lambda}{M+1} = \dfrac{\lambda}{M(M+1)} \ge r_0,
\end{equation*}
and $T - r_0 - \max\{t_k, s_k\} \ge r_0$. Restricted in $Q$, $\Phi_k$ has bounded derivatives. 

Next, we will make some perturbation on $\Phi_k$ and verify that its maximum still lies in $Q$. We define $\mu>0$ by
\begin{equation}\label{eq:mu_def}
2\mu(K+1) + \mu K^2 = \dfrac{\lambda}{2T^2}.
\end{equation}
Define $r_1 := \mu r_0 / 4$. Let $(q,p,\hq,\hp) \in \RR^{1+n+1+n}$ s.t. $\abs{p},\abs{q},\abs{\hp},\abs{\hq} \le r_1$. Then we define a new function
\begin{equation}\label{eq:hPhik}
\begin{aligned}
    \hPhik(t,x,s,y) =& \Phi_k(t,x,s,y) - \dfrac{\mu}{2}\parentheses{\abs{t-t_k}^2 + \abs{x-x_k}^2 + \abs{s-s_k}^2 + \abs{y-y_k}^2}\\
    &+ q(t-t_k) + \inner{p}{x-x_k} + \hq(s-s_k) + \inner{\hp}{y-y_k}.
\end{aligned}
\end{equation}
The second term in the RHS of \eqref{eq:hPhik}, which starts with $\frac{\mu}{2}$, ensures that $(t_k, x_k, s_k, y_k)$ becomes a strict maximum, and the rest of (linear) terms in the second line can be viewed as a linear perturbation. 
So, $\hPhik$ achieves a maximum at some other point in $\RR^{1+n+1+n}$, denoted by $(\htk,\hxk,\hsk,\hyk)$. By optimality, $(\htk,\hxk,\hsk,\hyk)$ must lie in the set
\begin{align*}
& \left\{ (t,x,s,y) ~\Big|~ \dfrac{\mu}{2}\parentheses{\abs{t-t_k}^2 + \abs{x-x_k}^2 + \abs{s-s_k}^2 + \abs{y-y_k}^2} \right.\\
& \hspace{0.8in} \left. \le q(t-t_k) + \inner{p}{x-x_k} + \hq(s-s_k) + \inner{\hp}{y-y_k} \right\},
\end{align*}
which implies
$$\dfrac{\mu}{2} \abs{ (\htk,\hxk,\hsk,\hyk) - (t_k, x_k, s_k, y_k) }^2 \le \abs{ (\htk,\hxk,\hsk,\hyk) - (t_k, x_k, s_k, y_k) } \cdot \abs{(q,p,\hq,\hp) }.$$
Therefore,
$$\abs{ (\htk,\hxk,\hsk,\hyk) - (t_k, x_k, s_k, y_k) } \le \dfrac{2}{\mu} \abs{(q,p,\hq,\hp) } \le \dfrac{2}{\mu} ~ 2r_1 = r_0.$$
So $\abs{\htk - t_k}, \abs{\hsk - s_k} \le r_0$, which implies $(\htk,\hxk,\hsk,\hyk) \in Q$. More importantly, $(\htk,\hxk,\hsk,\hyk)$ lies in the interior of $(0,T] \times \mX \times (0,T] \times \mX$. So, by the optimality of $(\htk,\hxk,\hsk,\hyk)$, we have
\begin{equation}\label{eq:optimality_hPhik}
\left\{ \begin{aligned}
&0 = \pt\, \hPhik(\htk,\hxk,\hsk,\hyk) = \pt V_k(\htk,\hxk) - \pt\,  \varphi(\htk,\hxk,\hsk,\hyk) - \mu(\htk-t_k) + q\\
&0 = \ps\, \hPhik(\htk,\hxk,\hsk,\hyk) = -\ps V^*(\hsk,\hyk) - \ps\,  \varphi(\htk,\hxk,\hsk,\hyk) - \mu(\hsk-s_k) + \hq\\
&0 = \nx \hPhik(\htk,\hxk,\hsk,\hyk) = \nx V_k(\htk,\hxk) - \nx  \varphi(\htk,\hxk,\hsk,\hyk) - \mu(\hxk-x_k) + p\\
&0 = \ny \hPhik(\htk,\hxk,\hsk,\hyk) = -\ny V^*(\hsk,\hyk) - \ny  \varphi(\htk,\hxk,\hsk,\hyk) - \mu(\hyk-y_k) + \hp\\
&\begin{pmatrix} \nx^2V_k(\htk,\hxk) & 0 \\0 & -\ny^2V^*(\hsk,\hyk) \end{pmatrix} \le 
\nabla_{x,y}^2 \varphi
+ \mu I_{2n} = \dfrac{1}{\delta}\begin{pmatrix} I_n & -I_n \\ -I_n  & I_n \end{pmatrix} + \mu I_{2n}
\end{aligned} \right.
\end{equation}
as first and second order necessary conditions. Note that $(\htk,\hxk,\hsk,\hyk)$ depend on $\ve$, $\delta$, $\lambda$, $q$, $p$, $\hq$, $\hp$, $\mu$, and $k$, where $\lambda$ and $\mu$ are fixed.

For given $\ve$, $\delta$ and $k$, we can view $(\htk,\hxk,\hsk,\hyk)$ as an implicit function of $(q,p,\hq,\hp)$, given by the equation $\nabla \hPhik(\htk,\hxk,\hsk,\hyk) = 0$, i.e.,
\begin{equation}\label{eq:implicit_eqn}
(q,p,\hq,\hp) = -\nabla \Phi_k(\htk,\hxk,\hsk,\hyk) + \mu(\htk-t_k, \hxk-x_k, \hsk-s_k, \hyk-y_k).
\end{equation}
Here, the gradient is taken w.r.t. $(t,x,s,y)$.
We claim that the inverse also holds: we can view $(q,p,\hq,\hp)$ as an implicit function of $(\htk,\hxk,\hsk,\hyk)$ locally, also given by \eqref{eq:implicit_eqn}. 
The Jacobian of this implicit function is
\begin{equation}\label{eq:Ak}
A_k := \pd{(q,p,\hq,\hp)}{(\htk,\hxk,\hsk,\hyk)} = \mu I_{2n+2} - \nabla^2 \Phi_k(\htk,\hxk,\hsk,\hyk).
\end{equation}
We will show this claim by proving that $A_k$ is nonsingular locally. Let us restrict
\begin{equation}\label{eq:range_hat}
\abs{\htk-t_k}, \abs{\hxk-x_k}, \abs{\hsk-s_k}, \abs{\hyk-y_k} < r_2
\end{equation}
where
\begin{equation}\label{eq:range_r}
0 < r_2 < \mu / \sqbra{8\parentheses{L + 3(M+1)^4 / \lambda^3}}.
\end{equation}
Later, this $r_2$ will change according to $\ve$ and $\delta$, but will be independent with $k$. We give an estimate of the Hessian next.
\begin{equation}\label{eq:diff_d2Phi}
\begin{aligned}
& \quad \abs{ \nabla^2 \Phi_k(t_k, x_k, s_k, y_k) - \nabla^2 \Phi_k(\htk,\hxk,\hsk,\hyk) } \\
& \le \abs{\nabla^2 V_k(\htk, \hxk) - \nabla^2 V_k(t_k, x_k)} + \abs{\nabla^2 V^*(\hsk, \hyk) - \nabla^2 V^*(s_k, y_k)} \\
& \quad + 2\lambda \parentheses{ \abs{\htk^{-3} - t_k^{-3}} + \abs{\hsk^{-3} - s_k^{-3}} } \\
& \le L \abs{(\htk, \hxk) - (t_k, x_k)} + L \abs{(\hsk, \hyk) - (s_k, y_k)} \\
& \quad + 2\lambda \parentheses{ 3\abs{\htk - t_k} \parentheses{\min\{ \htk, t_k \}}^{-4} + 3\abs{\hsk - s_k} \parentheses{\min\{ \hsk, s_k \}}^{-4}}\\
& \le 4Lr_2 + 12\lambda r_2 \parentheses{ \lambda / (M+1) }^{-4} \le \dfrac{1}{2} \mu.
\end{aligned}
\end{equation}
Here, we used the the definition of $\varphi$ \eqref{eq:varphi} and $\Phi_k$ \eqref{eq:Phi_k} in the first inequality.
The third inequality is because \eqref{eq:range_hat} holds and the range of $\htk$, $\hsk$, $t_k$, $s_k$ are given by \eqref{eq:Q_set}. The fourth inequality comes from the range for $r_2$ in \eqref{eq:range_r}. In the second inequality, we used the Lipschitz condition for the derivatives of the value functions (including $\pt^2 V_k$ and $\ps^2 V^*(s,y)$) and mean value theorem. We remark that this estimate \eqref{eq:diff_d2Phi} makes the analysis hard to generalize to the viscosity solution of the HJB equation, which does not have sufficient regularity in general.
Therefore,
\begin{equation*}
\begin{aligned}
& \quad A_k = \mu I_{2n+2} - \nabla^2 \Phi_k(\htk, \hxk, \hsk, \hyk) \\
& = \mu I_{2n+2} - \nabla^2 \Phi_k(t_k, x_k, s_k, y_k) + \parentheses{\nabla^2 \Phi_k(t_k, x_k, s_k, y_k) - \nabla^2 \Phi_k(\htk, \hxk, \hsk, \hyk)} \\
& \ge \mu I_{2n+2} - \abs{\nabla^2 \Phi_k(t_k, x_k, s_k, y_k) - \nabla^2 \Phi_k(\htk,\hxk,\hsk,\hyk)} \cdot I_{2n+2} \ge \dfrac{1}{2} \mu \, I_{2n+2}.
\end{aligned}
\end{equation*}
Here, the inequality $\ge$ between two symmetric matrix means that their difference is positive semi-definite. In the first inequality, we use the fact that $\nabla^2 \Phi_k(t_k, x_k, s_k, y_k) \le 0$, coming from the optimality of $(t_k, x_k, s_k, y_k)$. In the second equality, we use the estimate \eqref{eq:diff_d2Phi}. Therefore, the Jacobian $A_k$ in \eqref{eq:Ak} always nonsingular when \eqref{eq:range_r} holds and we confirm the claim after \eqref{eq:implicit_eqn}.

Next, we also want to derive an upper bound for the Jacobian $A_k$. A direct calculation from \eqref{eq:Ak} gives us
\begin{equation*}
\begin{aligned}
\norm{A_k}_2 \le & \mu + \norm{\nabla^2 V_k(\htk,\hxk)}_2 + \norm{\nabla^2 V^*(\hsk,\hyk)}_2\\
& + \dfrac{1}{\ve} \norm{ \begin{pmatrix} 1 & -1 \\-1 & 1 \end{pmatrix}}_2 + \dfrac{1}{\delta} \norm{\begin{pmatrix} I_n & -I_n \\-I_n & I_n \end{pmatrix}}_2 + \dfrac{4\lambda}{(\lambda/M)^2}\\
\le & \mu + 2K + \dfrac{2}{\ve} + \dfrac{2}{\delta} + \dfrac{4M^2}{\lambda} \le C \parentheses{\dfrac{1}{\ve} + \dfrac{1}{\delta}}.
\end{aligned}
\end{equation*}
Note that the notation $\norm{\cdot}_2$ is the $l_2$ operator norm for a matrix (instead of the Frobenius norm). The first inequality above is by definition of $\varphi$ in \eqref{eq:varphi} and $\Phi_k$ in \eqref{eq:Phi_k}. The second is by boundedness of the derivatives of the value functions. The third is because $\lambda$ is fixed while $\ve$ and $\delta$ are small and are going to $0$s later. This estimation for the Jacobian gives us a bound for the implicit function
\begin{equation}\label{eq:regularity_p}
\abs{(q,p,\hq,\hp)} \le C \parentheses{\dfrac{1}{\ve} + \dfrac{1}{\delta}} \abs{(\htk-t_k, \hxk-x_k, \hsk-s_k, \hyk-y_k)}.
\end{equation}
Therefore, we also require that 
\begin{equation}\label{eq:range_r2}
r_2 \le \parentheses{\dfrac{1}{\ve} + \dfrac{1}{\delta}}^{-1} \dfrac{r_1}{2C}
\end{equation}
where the $C$ in \eqref{eq:range_r2} is the same as the $C$ in \eqref{eq:regularity_p}, in order to guarantee 
$\abs{(q,p,\hq,\hp)} \le r_1$. Now we can see that the $r_2$ depends on $\ve$ and $\delta$, but it is independent of $k$.

Next, we consider the quantity
\begin{equation*}
B_k := \ps V^*(\hsk,\hyk) - \pt V_k(\htk,\hxk),
\end{equation*}
which depends on $\ve,\,\delta,\,\lambda,\,q,\,p,\,\hq,\,\hp,\,\mu,$ and $k$. On the one hand, by the optimality condition \eqref{eq:optimality_hPhik},
\begin{equation}\label{eq:one_hand}
\begin{aligned}
B_k &= -\ps \varphi(\htk,\hxk,\hsk,\hyk) -\pt \varphi(\htk,\hxk,\hsk,\hyk) - \mu \sqbra{(\hsk-s_k) + (\htk-t_k)} + q+\hq \\
& = \lambda/\htk^{~2} + \lambda/\hsk^{~2} - \mu \sqbra{(\hsk-s_k) + (\htk-t_k)} + q+\hq \\
& \ge 2\lambda/T^2 - \mu \parentheses{\abs{\hsk-s_k} + \abs{\htk-t_k}} + q+\hq,
\end{aligned}
\end{equation}
where the terms with $\ve$ in $\ps \varphi$ and $\pt \varphi$ cancel each other.

On the other hand, using the HJ equations that $V^*$ and $V_k$ satisfy, we have
\begin{equation*}
\begin{aligned}
B_k &= G(\hsk,\hyk,u^*(\hsk,\hyk),-\ny V^*,-\ny^2 V^*) - G(\htk,\hxk,u_k(\htk,\hxk),-\nx V_k,-\nx^2 V_k) \\
& = \sup_{u \in \RR^{n'}} G(\hsk,\hyk,u,-\ny V^*,-\ny^2 V^*) - G(\htk,\hxk,u_k(\htk,\hxk),-\nx V_k,-\nx^2 V_k) \\
& \le \sup_{u \in \RR^{n'}} G(\hsk,\hyk,u,-\ny V^*,-\ny^2 V^*) - \sup_{u \in \RR^{n'}} G(\htk,\hxk,u,-\nx V_k,-\nx^2 V_k) \\
& \quad + L \abs{u_k(\htk,\hxk) - u_k^{\diamond}(\htk,\hxk)}\\
& \le \sup_{u \in \RR^{n'}} \sqbra{G(\hsk,\hyk,u,-\ny V^*,-\ny^2 V^*) - G(\htk,\hxk,u,-\nx V_k,-\nx^2 V_k)}\\
& \quad + L \abs{u_k(\htk,\hxk) - u_k^{\diamond}(\htk,\hxk)},
\end{aligned}
\end{equation*}
where we have consecutively used: HJ equations for $V^*$ and $V_k$; the optimality condition \eqref{eq:max1} for $u^*$; the definition of $u_k^{\diamond}$ in \eqref{eq:udiamond} and the Lipschitz condition of $G$ in $u$; a simple property for supremum. Note that we omit the input $\hsk, \hyk$ for $\ny V^*, \ny^2 V^*$ and $\htk, \hxk$ for $\nx V_k, \nx^2 V_k$ for notational simplicity. Therefore, by the definition of $G$ in \eqref{eq:generalized_Hamiltonian}, 
\begin{equation}\label{eq:other_hand}
\begin{aligned}
B_k & \le \sup_{u \in \RR^{n'}} \left\{ \frac12 \Tr \sqbra{ \nx^2V_k(\htk,\hxk)\sigma\sigma\tp(\hxk) - \ny^2 V^*(\hsk,\hyk) \sigma\sigma\tp(\hyk)} \right.\\
& \quad +\sqbra{ \inner{\nx V_k(\htk,\hxk)}{b(\hxk,u)} - \inner{\ny V^*(\hsk,\hyk)}{b(\hyk,u)} } \\
& \quad + r(\hxk,u) - r(\hyk,u) \bigg\} + L \abs{u_k(\htk,\hxk) - u_k^{\diamond}(\htk,\hxk)}\\
& =: \sup_{u \in \RR^{n'}} \curlybra{(\rom{1}) + (\rom{2}) + (\rom{3})} + L \abs{u_k(\htk,\hxk) - u_k^{\diamond}(\htk,\hxk)}.
\end{aligned}
\end{equation}

Next, we bound the three terms in \eqref{eq:other_hand}. Using the estimates \eqref{eq:bound1} and \eqref{eq:bound2}, we can easily show that
\begin{equation}\label{eq:bound3}
\abs{\hxk-\hyk} \le \abs{\hxk-x_k} + \abs{\hyk-y_k} + \abs{x_k - y_k} \le 2r_2 + 2L(\ve + \delta)
\end{equation}
and
\begin{equation}\label{eq:bound4}
\dfrac{1}{\delta}\abs{\hxk-\hyk}^2 \le \dfrac{1}{\delta} \parentheses{8r_2^2 + 2\abs{x_k - y_k}^2} \le \dfrac{8r_2^2}{\delta} + 8L^2(\ve + \delta)
\end{equation}

For $(\rom{3})$, we have
\begin{equation}\label{eq:term3}
(\rom{3}) = r(\hxk,u) - r(\hyk,u) \le L \abs{\hxk -\hyk} \le 2Lr_2 + 2L^2(\ve+\delta),
\end{equation}
where we used Lipschitz condition of $r$ in Assumption \ref{assump:basic} and \eqref{eq:bound3}.

For $(\rom{2})$, we have
\begin{equation}\label{eq:term2}
\begin{aligned}
(\rom{2}) &= \inner{\nx \varphi(\htk,\hxk,\hsk,\hyk) + \mu(\hxk-x_k) -p}{b(\hxk,u)}\\
& \quad + \inner{\ny \varphi(\htk,\hxk,\hsk,\hyk) + \mu(\hyk-y_k) -\hp}{b(\hyk,u)}\\
& = \inner{\dfrac{1}{\delta} (\hxk-\hyk) + \mu(\hxk-x_k) -p}{b(\hxk,u)} \\
& \quad + \inner{ \dfrac{1}{\delta} (\hyk-\hxk) + \mu(\hyk-y_k) -\hp}{b(\hyk,u)}\\
& \le \dfrac{L}{\delta} \abs{\hxk-\hyk}^2 + \mu K(\abs{\hxk-x_k} + \abs{\hyk-y_k}) + K(\abs{p}+\abs{\hp})\\
& \le 8r_2^2L/\delta + 8L^3(\ve+\delta) + \mu K(\abs{\hxk-x_k} + \abs{\hyk-y_k}) + K(\abs{p}+\abs{\hp}),
\end{aligned}
\end{equation}
where we have consecutively used: the optimality condition \eqref{eq:optimality_hPhik}; the definition of $\varphi$ in \eqref{eq:varphi}; boundedness and Lipschitz condition of $b$ in Assumption \ref{assump:basic}; the bound \eqref{eq:bound4}.

For $(\rom{1})$, we have
\begin{equation}\label{eq:term1}
\begin{aligned}
(\rom{1}) &= \dfrac12 \Tr \sqbra{ \begin{pmatrix}  \sigma(\hxk) \\ \sigma(\hyk) \end{pmatrix}\tp \begin{pmatrix} \nx^2V_k(\htk,\hxk) &0 \\ 0& -\ny^2V^*(\hsk,\hyk) \end{pmatrix} \begin{pmatrix}  \sigma(\hxk) \\ \sigma(\hyk) \end{pmatrix} }\\
&\le \dfrac12 \Tr \sqbra{ \begin{pmatrix}  \sigma(\hxk) \\ \sigma(\hyk) \end{pmatrix}\tp \parentheses{\dfrac{1}{\delta}\begin{pmatrix} I_n & -I_n \\ -I_n & I_n \end{pmatrix} + \mu I_{2n}} \begin{pmatrix}  \sigma(\hxk) \\ \sigma(\hyk) \end{pmatrix} }\\
&= \dfrac{1}{2\delta} \abs{\sigma(\hxk) - \sigma(\hyk)}^2 + \dfrac{\mu}{2} \parentheses{\abs{\sigma(\hxk)}^2 + \abs{\sigma(\hyk)}^2}\\
& \le \dfrac{L^2}{2\delta} \abs{\hxk-\hyk}^2 + \mu K^2 \le 4r_2^2L^2/\delta + 4L^4(\ve + \delta) + \mu K^2,
\end{aligned}
\end{equation}
where we have consecutively used: a simple transform in linear algebra; the second order optimality condition in \eqref{eq:optimality_hPhik}; a simple calculation; boundedness and Lipschitz condition of $\sigma$ in Assumption \ref{assump:basic}; the bound \eqref{eq:bound4}.

Combining \eqref{eq:one_hand}, \eqref{eq:other_hand}, \eqref{eq:term3}, \eqref{eq:term2}, and \eqref{eq:term1}, we obtain
\begin{equation*}
\begin{aligned}
& \quad 2\lambda/T^2 - \mu \parentheses{\abs{\hsk-s_k} + \abs{\htk-t_k}} + q+\hq \\
& \le 4r_2^2L^2/\delta + 4L^4(\ve + \delta) + \mu K^2 +8r_2^2L/\delta + 8L^3(\ve+\delta) + \mu K\parentheses{\abs{\hxk-x_k} + \abs{\hyk-y_k}} \\
& \quad + K(\abs{p}+\abs{\hp}) + 2Lr_2 + 2L^2(\ve+\delta) + L \abs{u_k(\htk,\hxk) - u_k^{\diamond}(\htk,\hxk)}.
\end{aligned}
\end{equation*}
Using the bounds \eqref{eq:regularity_p} and \eqref{eq:range_hat}, we get
\begin{equation}\label{eq:ineq2}
\begin{aligned}
2\lambda/T^2  \le 2\mu(K+1)r_2 + \mu K^2 + (4L^4+8L^3+2L^2)(\ve+\delta) + (4r_2^2L^2+8r_2^2L)/\delta \\
+ (2K+2)C\parentheses{\dfrac{1}{\ve} + \dfrac{1}{\delta}}r_2 + L \abs{u_k(\htk,\hxk) - u_k^{\diamond}(\htk,\hxk)}.
\end{aligned}
\end{equation}
Next, we pick a box in $\RR^{2n+2}$ that centered at $(t_k, x_k, s_k, y_k)$ and have side length $r_3 = 2r_2/\sqrt{n}$. Then $\abs{\hxk - x_k}, \abs{\hyk - y_k} \le \sqrt{n}r_3/2 = r_2$, so that all the estimates before hold in this box. If we integrate \eqref{eq:ineq2} over the box w.r.t. $(\htk,\hxk,\hsk,\hyk)$ and divided it by $r_3^{2n+2}$, we obtain
\begin{equation}\label{eq:ineq3}
\begin{aligned}
2\lambda/T^2  &\le 2\mu(K+1)r_2 + \mu K^2 + (4L^4+8L^3+2L^2)(\ve+\delta) + (4r_2^2L^2+8r_2^2L)/\delta\\
& \quad + (2K+2)C\parentheses{\dfrac{1}{\ve} + \dfrac{1}{\delta}}r_2 +  (2r_2/\sqrt{n})^{-2n-2} L \norm{u_k- u_k^{\diamond}}_{L^1}.
\end{aligned}
\end{equation}
We recall that $\lambda$ is fixed at first. We also recall that the definition of $\mu$ in \eqref{eq:mu_def} ensures that
\begin{equation}\label{eq:contra1}
2\mu(K+1)r_2 + \mu K^2 \le \dfrac{\lambda}{2T^2}.
\end{equation}
Therefore, if we firstly set $\ve$ and $\delta$ to be small such that 
\begin{equation}\label{eq:contra2}
(4L^4+8L^3+2L^2)(\ve + \delta) < \dfrac{\lambda}{2T^2}.
\end{equation}
Then we set $r_2$ to be small such that
\begin{equation}\label{eq:contra3}
(4r_2^2L^2+8r_2^2L)/\delta + (2K+2)C\parentheses{\dfrac{1}{\ve} + \dfrac{1}{\delta}}r_2 < \dfrac{\lambda}{2T^2},
\end{equation}
where the $C$ in \eqref{eq:contra3} is the same as the $C$ in \eqref{eq:ineq3}. Next, note that $\norm{u_k- u_k^{\diamond}}_{L^1} \le \sqrt{T} \norm{u_k- u_k^{\diamond}}_{L^2}$. So, by \eqref{eq:hard_case}, we can set $k$ to be large enough such that 
\begin{equation}\label{eq:contra4}
(2r_2/\sqrt{n})^{-2n-2} L \norm{u_k- u_k^{\diamond}}_{L^1} < \dfrac{\lambda}{2T^2},
\end{equation}
where we used a key argument that $r_2$ is independent of $k$, given in \eqref{eq:range_r2}.
Finally, substituting \eqref{eq:contra1}, \eqref{eq:contra2}, \eqref{eq:contra3}, and \eqref{eq:contra4} into \eqref{eq:ineq3}, we obtain an contradiction, so Lemma \ref{lem:claim} is proved.
\end{proof}

\end{appendices}
\bibliography{ref}
\end{document}